\documentclass[11pt]{article}
	
	\usepackage{algorithm}
	\usepackage{algpseudocode}
    \usepackage{url}
    \usepackage{verbatim}
    \usepackage[titletoc]{appendix}
    \usepackage{graphicx}
	\usepackage{amsmath}
	\usepackage{amsthm}
	\usepackage{amsfonts}
	\usepackage{graphicx}
    \usepackage{nicefrac}
    \usepackage{longtable}
    \usepackage{color}
    \usepackage{graphicx, amssymb,graphics}
 	\usepackage{subfigure}
 	\usepackage{epstopdf}
 	\usepackage{ulem}

\newtheorem{rem}{Remark}[section]
\newtheorem{theorem}{Theorem}[section]

\newtheorem{lem}{Lemma}[section]

\newtheorem{prop}{Proposition}[section]

\newcommand{\n}{\mbox{\boldmath$n$}}

\newcommand{\diff}{\mathrm{d}}
\newcommand{\dS}{\diff S}
\newcommand{\hf}{\nicefrac{1}{2}}
\newcommand{\nrm}[1]{\left\| #1 \right\|}
\newcommand{\ciptwo}[2]{( #1 , #2 )}

\newcommand{\dx}{\mbox{d}\textbf{x}}
\newcommand\dt {{\Delta t}}

\newcommand{\eipx}[2]{\left( #1 , #2 \right)_{\rm x}}
\newcommand{\eipy}[2]{\left( #1 , #2 \right)_{\rm y}}

\newcommand{\eipvec}[2]{\left( #1 , #2 \right)}

\newcommand{\mC}{\mathcal C}

\newcommand{\mL}{\mathcal L}
\newcommand{\mG}{\mathcal G}
\newcommand{\mV}{\mathcal V}
\newcommand{\mP}{\mathcal P}

\newcommand{\mN}{\mathcal N}

\newcommand{\bU}{\boldsymbol{U}}

\allowdisplaybreaks[4]

	\newcommand\be {\begin{equation}}
	\newcommand\ee {\end{equation}}
	\newcommand\bu {\boldsymbol{u}}
    \newcommand\bv {\boldsymbol{v}}
    \newcommand\bw {\boldsymbol{w}}

	\title{Convergence analysis of 
a numerical scheme for the Cahn-Hilliard-Navier-Stokes system with dynamical boundary condition and its application to moving contact line problem}

	\date{\today}

\begin{document}
	
\author{
Yunzhuo Guo \thanks{Department of Applied Mathematics, Hong Kong Polytechnic University, Hung Hom, Hong Kong (yunzguo@polyu.edu.hk)}
\and
Cheng Wang\thanks{Department of Mathematics, The University of Massachusetts, North Dartmouth, MA  02747, USA (cwang1@umassd.edu)}
\and
Xianmin Xu \thanks{LSEC, Academy of Mathematics and Systems Science, Chinese Academy of Sciences, 100190, Beijing, China (xmxu@lsec.cc.ac.cn)}
\and
Zhengru Zhang\thanks{Laboratory of Mathematics and Complex Systems, Beijing Normal University, Beijing 100875, P.R. China (zrzhang@bnu.edu.cn)}
}

 	\maketitle
	\numberwithin{equation}{section}
	
\begin{abstract}	
A finite difference numerical scheme is proposed and analyzed for the Cahn–Hilliard–Navier–
Stokes system, combined with a dynamical boundary condition. Such a physical system has potential applications in the moving contact line problem. The boundary profile is governed by a lower-dimensional energy potential, coupled with a non-homogeneous boundary condition for the phase variable. In the numerical design, a convex-splitting approach is applied to the chemical potential in both bulk and surface levels, which leads to a highly coupled nonlinear system. A semi-implicit discretization is taken in the nonlinear fluid convection, as well as the coupled terms between the fluid motion and phase variable evolution. A careful finite difference approximation and convexity analysis reveals that such a numerical system could be represented as a non-symmetric and monotone mapping associated with the fluid convection. In turn, the unique solvability is valid based on the monotonicity argument. The total energy stability analysis is obtained through a careful summation-by-part calculation. In particular, an optimal rate convergence analysis is theoretically established in this work. The discrete mass conservation of the exact solution is required to preserve the mean-zero property of the error function, so that the associated discrete $H_h^{-1}$ norm is well-defined. A combination of the Fourier projection and an auxiliary function is applied to overcome this difficulty. Furthermore, an approach of rough and refined error estimates concludes the desired convergence result. Some numerical results are presented in this article, which demonstrate the robustness of the proposed numerical scheme. In our knowledge, this work provides a theoretical proof of convergence analysis and error estimate for a numerical scheme to the moving contact line problem, for the first time in the literature.



\bigskip

\noindent
{\bf Key words and phrases}: 
Cahn-Hilliard-Navier-Stokes equation, dynamical boundary condition, unique solvability, total energy stability, convergence analysis and error estimate 

\noindent
{\bf AMS subject classification}: \, 35K35, 35K55, 49J40, 65M06, 65M12	
\end{abstract}
	


\section{Introduction} 
The Cahn-Hilliard-Navier-Stokes (CHNS) system \cite{liu03} is a well-known phase field model coupling the Navier-Stokes (NS) equation for the incompressible fluid motion and the Cahn-Hilliard (CH) equation for the phase field. 
The standard Neumann boundary condition for the phase variable and chemical potential has been widely considered, due to the convenience of analysis and calculation \cite{chen24b, chen22b, diegel17, feng06, GuoY24, han15, kay07}. On the other hand, some scientists have included the surface energy in the description of the short-distance interaction near the boundary, and the dynamical boundary condition has been designed. Let $\Omega$ be a bounded domain with Lipchitz continuous boundary $\Gamma=\partial \Omega$. The bulk and surface free energies are given by
\begin{align} 
& E_{\rm bulk} (\phi) = \int_\Omega \Big( \frac{\varepsilon^2}{2} | \nabla \phi |^2 + F (\phi) \Big) \dx , \quad E_{\rm surf} (\psi) = \int_\Gamma \,\Big( \frac{\kappa\varepsilon}{2} | \nabla_\Gamma \psi |^2 + G (\psi) \Big) \,  \dS, \label{energy-CH-1} \\
& E_{\rm total}=E_{\rm bulk}+E_{\rm surf}+\frac{1}{2\gamma}\|\bu\|_2^2.
\label{nonlinear terms}
\end{align} 
Here $F (\cdot)$ and $G(\cdot)$ are the energy density functions in bulk and on the boundary, respectively. The descent of the total free energy drives the system evolution. On the boundary, system is controlled by a lower dimensional phase field equation and leads to different types of dynamical boundary condition models. 
The Gal model, introduced by Gal et al. \cite{Gal06}, provides an early formulation of dynamical boundary conditions for the Cahn-Hilliard equation. The surface mass fraction dynamics follows an Allen-Cahn equation with an added flux, a feature which leads to non-conservation of both the total system mass and the boundary mass. In contrast, Liu and Wu \cite{LiuC2019} designed a conserved model that assume no mass exchange between boundary and interior, so that the mass conservation is satisfied at both levels. Moreover, Goldstein et al. \cite{Goldstein11} revised the bulk-surface transport mechanism. Their GMS model ensures equal mass exchange between the surface and the bulk, guaranteeing total mass conservation. Besides the above three basic boundary conditions, more general models have been raised. Building on the Liu–Wu model, Knopf and Lam proposed a further set of dynamical boundary conditions for the Cahn–Hilliard equation, known as the KL model \cite{Knopf_2020}. A later article \cite{Knopf2021} established a broader framework, the KLLM model, which subsumes both the Liu-Wu and GMS models in certain limits. As an effective tool for physical modeling, the Onsager principle is applied in thermodynamically consistent boundary conditions \cite{Jing2023} and a non-isothermal model with certain modifications.

In this work, we consider the CHNS system with Allen-Cahn type dynamical boundary condition and $\kappa =0$ in \eqref{energy-CH-1}, which could termed as relaxed boundary condition. The model is closely related to the evolution of an interface in contact with the solid boundary, i.e., the well-known moving contact line problem \cite{PRL766}. The equations are formulated as
\begin{align}
& \bu_t + \bu\cdot\nabla\bu+\nabla p-\nu\Delta\bu=-\gamma\phi\nabla\mu, \label{CHNS-DBC-1} \\
& \phi_t+\nabla\cdot(\phi\bu)=\Delta \mu, \quad \mu = F'(\phi)
-\varepsilon^2\Delta\phi,  \label{CHNS-DBC-2} \\
& \nabla \cdot \bu=0, \label{CHNS-DBC-3} \\
& \partial_n \mu = 0, \quad \phi|_\Gamma = \psi, \quad \text{ on $\Gamma$,}  \label{CHNS-DBC-4} \\
& \psi_t= -\mu_\Gamma, \quad \mu_\Gamma = G'(\psi) + \varepsilon^2 \partial_{n} \phi,  \quad \text{ on $\Gamma$,}  \label{CHNS-DBC-5} \\
& \bu=\boldsymbol{0},  \quad \text{ on $\Gamma$,}\label{CHNS-DBC-6}
\end{align}
where $\gamma > 0$ is related to surface tension, $\nu$ stands for the kinematic viscosity, $\varepsilon$ is an bulk diffuse interface thickness parameter, $p$ is the pressure, $\mu_\Gamma$ stands for the surface chemical potential. 
The normal derivative is the bridge connecting the boundary and the bulk, which leads to the total energy dissipation: $\frac{d}{dt}E_{\rm total}(\phi) = -\|\nabla \mu\|_2^2 - \|\mu_\Gamma\|_2^2-\frac{\nu}{\gamma}\|\nabla \bu\|_2^2 \leq 0$. We remark that the relaxed boundary condition \eqref{CHNS-DBC-5} is an essential component of the general Navier boundary condition (GNBC) for the moving contact line \cite{QianWangSheng03}. Here, instead of the Navier slip-type boundary conditions for velocity, we use the no-slip boundary condition for simplicity, as has been used in other phase-field models of the moving contact line problem (c.f. \cite{Jacqmin2000}). It is also noticed that the no-penetration, no-slip boundary for the velocity profile $\bu$ indicates that the interaction between the motion and the surface was omitted. The tangential velocity barely affects the system evolution near the boundary.

Designing a numerical scheme to preserve the total energy stability turns out to be a very challenging issue, due to the nonlinear and coupled nature of the PDE system. For the square domain, the normal derivative is not well-defined at the vertexes. This fundamental issue may affect the numerical accuracy and the theoretical analysis. 
To avoid this challenging issue, many studies have focused on the structure-preserving temporal semi-discrete schemes, such as first and second order schemes for Liu-Wu model \cite{Bao2021a,Meng23} and KLLM model \cite{Bao2021b}. On the other hand, a fully discrete algorithm requires certain modifications. In \cite{Cherfils_10,CH-DBC-2024}, the physical boundary was applied at $y$-direction with $x$-direction remaining periodic, or mathematically, a quotient domain that 
\begin{equation*} 
\begin{aligned} 
  & 
\Omega=\Pi_{i=1}^{d-1}\left(\mathbb{R} /\left(L_i \mathbb{Z}\right)\right) \times\left(0, L_d\right), \quad L_i>0, i=1, \ldots, d, \quad d=2 \text { or } 3, 
\\
  & \mbox{with a boundary} \quad 
\Gamma=\partial \Omega=\Pi_{i=1}^{d-1}\left(\mathbb{R} /\left(L_i \mathbb{Z}\right)\right) \times\left\{0, L_d\right\}.
\end{aligned} 
\end{equation*} 
A fully discrete, multi-scale convex splitting method was proposed in \cite{CH-DBC-2024} for the pure phase field part, and the convergence analysis was established in a later work!~\cite{Guo_2025_convergence_CHDBC}. 
A scalar auxiliary variable (SAV) finite element simulation was also reported in \cite{Liu2024}. Besides, a Cahn-Hilliard-Hele-Shaw (CHHS) system with Liu-Wu model was considered in \cite{Yao2022} where the tangential velocity is involved in the interaction on the boundary. The difference is that the CHHS model is static and does not take into account the kinetic energy, which means the phase variable could uniquely determine the velocity and no temporal derivative is involved. 

In this work, we follow the idea of partial dynamical boundary in \cite{Cherfils_10,CH-DBC-2024} and carry out a theoretical analysis. The numerical approximation to the chemical potential profiles, at both the interior region and on the boundary section, is based on the convex-concave decomposition of the double-well energy functional, which dates back to Eyre \cite{eyre98}. This approach has been widely used in the phase field model to reach an unconditional energy dissipation \cite{baskaran13a, baskaran13b, chen16, cheng2019d} and the positivity-preserving property for singular energy models \cite{chen19b, dong18a, GuoY24}. In particular, the nonlinear term is implicitly treated and the linear expansive term is explicitly updated. The discrete boundary condition for the phase variable, at the next time step, is coupled with the evolutionary equation of the boundary profile. The normal derivative is discretized by a wide stencil to maintain the second order spatial accuracy and further leads to an unconditional total energy dissipation, combined with a summation-by-part formula. 

The fluid momentum equation is solved using a projection method. First, an intermediate velocity field is constructed via a semi-implicit algorithm: the kinematic diffusion term is treated implicitly, while the pressure gradient is updated explicitly. Both the fluid convection term and the phase field coupled term are approximated in a semi-implicit manner. Given a fixed chemical potential profile, the intermediate velocity is computed using a linear convection-diffusion solver. Afterwards, a Helmholtz projection onto a divergence-free vector field is applied to obtain the velocity and pressure at the subsequent time step \cite{Guermond-20066011,Liu_Shen_15}. 

The unique solvability analysis for the proposed numerical scheme is highly non-standard. The nonlinear convective term in the momentum equation leads to a non-symmetric nature of the whole numerical system, which means the solution could not be represented as the minimization of a convex functional. The Browder-Minty lemma \cite{Browder-Minty-1,Browder-Minty-2} is a helpful tool to overcome this subtle difficulty, based on the monotonic property of the numerical system. The energy stability of the numerical scheme turns out to be a direct consequence of a careful energy estimate, which gives a dissipation law for the discrete total energy functional. The surface inner product for the phase variable is connected to the boundary evolution equation by the normal derivative, with the help of summation-by-parts formulas. The staggered location of the fluid velocity vector and the phase variable plays an important role in the theoretical derivation. 

Meanwhile, it is noticed that, a theoretical justification of the convergence analysis and error estimate for the moving contact line has been an open problem for many years, because of the highly nonlinear nature, as well as the non-trivial coupling between the bulk and the boundary section, although there exist some works of efficient energy stable numerical schemes \cite{GaoWang12,GaoWang14,ShenYangYu15,YangYu18}. In this work, we provide an optimal rate convergence analysis for the proposed numerical scheme, due to its connection with the moving contact line. As commonly observed in the phase field-fluid coupled system, the standard $\ell^\infty(0,T;\ell^2) \cap \ell^2(0,T;H_h^2)$ error estimate for the phase variable could not pass through, because of lack of regularity to control the error inner products associated with the nonlinear coupled terms in the phase evolutionary and the momentum equations. To overcome this difficulty, the energy norms, $\ell^\infty(0,T; H_h^1)$ for the phase variable and $\ell^\infty(0,T;  \ell^2)$ for the velocity vector in the bulk, as well as the $\ell^\infty (0, T; \ell^2)$ for the phase variable on the boundary section, have to be followed in the convergence analysis. Moreover, a combination of rough and a refined error estimates has to be applied to accomplish the analysis. In more details, the rough error estimate gives a maximum norm bound of the numerical solution, at both the bulk and the boundary, and the refined error estimate analyzes the nonlinear term and concludes the convergence analysis. Besides, a combination of $x$-directional Fourier projection and $y$-directional trigonometric auxiliary function replaces the standard Fourier projection since the dynamical boundary. This approach achieves the discrete mass conservation of the exact solution and makes the discrete Poincar\'e inequality  valid; also see the related analysis \cite{Guo_2025_convergence_CHDBC}. 

The rest of this article is organized as follows. In Section \ref{sec:numerical scheme}, we review the finite difference spatial approximation and propose the fully discrete numerical scheme. The unique solvability analysis is derived as well. A modified total energy stability is proved in Section \ref{sec:energy}. The detailed convergence analysis is provided in Section \ref{sec:convergence}, including the rough and refined error estimates. Some numerical simulation results are presented in Section \ref{sec:simulations} to verify the theoretical analysis. Finally, some concluding remarks are made in Section \ref{sec:conclusion}.

\section{Numerical scheme} \label{sec:numerical scheme}
\subsection{Finite difference spatial discretization}
The standard finite difference spatial approximation is applied. We present the numerical approximation over a two-dimensional computational domain $\Omega = (0,1)^2$. The three-dimensional extension is straightforward and omitted for brevity. As mentioned above, a periodic boundary condition is assumed in the $x$-direction and physical boundary condition is imposed at the top and bottom sections, namely, at 
\begin{equation} 
\Gamma_{B} := \left\{ (x,y) \ \middle| \  0\le x\le 1, \  y=0\right\}  ,\quad \Gamma_{T}:= \left\{ (x,y) \ \middle| \ 0\le x\le 1,\  y=1 \right\}. 
	\label{boundary-Gamma-1} 
\end{equation} 
In addition, a uniform spatial mesh size, $\Delta x = \Delta y = h = \frac{1}{N}$ with $N \in\mathbb{N}$, is taken. We define  
	\[
\mC_{\mathrm{p},x}(\Omega) := \left\{ f_{i,j}  \  \middle| \ f_{i,j} = f_{i+\alpha N,j }, \ \forall \, i,\alpha \in \mathbb{Z}, \  j=0,\cdots, N \right\},
	\]
a set of grid functions with discrete periodic boundary conditions imposed in the $x$-direction. In particular, the subscripted symbols ${\mathrm{p},x}$ indicate that periodic boundary conditions are imposed in only the $x$-direction, throughout this paper. Herein the notation $f_{i,j}$ represents  the numerical value of $f\in \mC_{\mathrm{p},x}(\Omega)$ at the real-space point $( p_i, p_j)\in \mathbb{R}^2$, where $p_i:=i\cdot h$. 

The Neumann boundary condition in the $y$-direction requires an additional ghost layer at the top and bottom sections of the domain. The grid function space with ghost layers is defined as
	\[
\mC_{\mathrm{p},x}^{+}(\Omega) := \left\{ f_{i,j}  \  \middle| \ f_{i,j} = f_{i+\alpha N,j }, \ \forall \, i,\alpha \in \mathbb{Z}, \  j=-1,\cdots, N+1 \right\}.
	\]
Analogously, more grid function spaces are introduced: 
\begin{align*}
    & {\mathcal E}_{\mathrm{p},x}(\Omega) := \left\{ f_{i+\hf,j}  \  \middle| \ f_{i+\hf,j} = f_{i+\hf+\alpha N,j }, \ \forall \, i,\alpha \in \mathbb{Z}, \  j=0,\cdots, N \right\}, \\
    & {\mathcal E}_{\mathrm{p},x}^{+}(\Omega) := \left\{ f_{i+\hf,j}  \  \middle| \ f_{i+\hf,j} = f_{i+\hf+\alpha N,j }, \ \forall \, i,\alpha \in \mathbb{Z}, \  j=-1,\cdots, N+1 \right\}, \\
    & {\mathcal N}_{\mathrm{p},x}(\Omega) := \left\{ f_{i,j+\hf}  \  \middle| \ f_{i,j+\hf} = f_{i+\alpha N,j +\hf}, \ \forall \, i,\alpha \in \mathbb{Z}, \  j=0,\cdots, N-1 \right\}, \\
    & {\mathcal N}_{\mathrm{p},x}^{+}(\Omega) := \left\{ f_{i,j+\hf}  \  \middle| \ f_{i,j+\hf} = f_{i+\alpha N,j +\hf}, \ \forall \, i,\alpha \in \mathbb{Z}, \  j=-1,\cdots, N \right\},\\
    & {\mathcal G}_{\mathrm{p},x}(\Omega) := \left\{ f_{i+\hf,j+\hf}  \  \middle| \ f_{i+\hf,j+\hf} = f_{i+\hf+\alpha N,j +\hf}, \ \forall \, i,\alpha \in \mathbb{Z}, \  j=0,\cdots, N-1 \right\}, \\
    & {\mathcal G}_{\mathrm{p},x}^{+}(\Omega) := \left\{ f_{i+\hf,j+\hf}  \  \middle| \ f_{i+\hf,j+\hf} = f_{i+\hf+\alpha N,j +\hf}, \ \forall \, i,\alpha \in \mathbb{Z}, \  j=-1,\cdots, N \right\}. 
\end{align*}
The discrete average and difference operators $A_x,D_x: \mC_{\mathrm{p},x}(\Omega) \to {\mathcal E}_{\mathrm{p},x}(\Omega)$, $A_y,D_y: \mC_{\mathrm{p},x}^{+}(\Omega) \to {\mathcal N}_{\mathrm{p},x}^{+}(\Omega)$, are given by 
	\begin{align} 
	  & 
 A_x f_{i+\hf,j} := \frac{1}{2}\left(f_{i+1,j} + f_{i,j} \right), \quad D_x f_{i+\hf,j} := \frac{1}{h}\left(f_{i+1,j} - f_{i,j} \right), \quad \forall f\in \mC_{\mathrm{p},x}(\Omega) , 
\\
  & 
A_y f_{i,j+\hf} := \frac{1}{2}\left(f_{i,j+1} + f_{i,j} \right), \quad D_y f_{i,j+\hf} := \frac{1}{h}\left(f_{i,j+1} - f_{i,j} \right),  \quad \forall f\in \mC_{\mathrm{p},x}^{+}(\Omega) . 
	\end{align} 
Similarly, the discrete average and difference operators $a_x,d_x: {\mathcal E}_{\mathrm{p},x}(\Omega) \to \mC_{\mathrm{p},x}(\Omega)$, $a_y,d_y: {\mathcal N}_{\mathrm{p},x}^{+}(\Omega) \to \mC_{\mathrm{p},x}(\Omega)$, are defined as 
	\begin{align}
	  & 
a_x f_{i, j} := \frac{1}{2}\left(f_{i+\hf, j} + f_{i-\hf, j} \right),	 \quad d_x f_{i, j} := \frac{1}{h}\left(f_{i+\hf, j} - f_{i-\hf, j} \right), \quad \forall f \in {\mathcal E}_{\mathrm{p},x}(\Omega),
\\
  & 
a_y g_{i,j} := \frac{1}{2}\left(g_{i,j+\hf} + g_{i,j-\hf} \right),	 \quad d_y g_{i,j} := \frac{1}{h}\left(g_{i,j+\hf} - g_{i,j-\hf} \right) , \quad \forall g \in {\mathcal N}_{\mathrm{p},x}^{+}(\Omega).
\end{align}

For a scalar grid function $g\in \mC_{\mathrm{p},x}^{+}(\Omega)$ and a vector function $\boldsymbol{f} = ( f^x , f^y)^T$, with $f^x\in {\mathcal E}_{\mathrm{p},x}(\Omega) $ and $f^y\in {\mathcal N}_{\mathrm{p},x}^+(\Omega)$, the discrete divergence turns out to be   
	\begin{equation} 
\nabla_h\cdot \big( g \boldsymbol{f} \big)_{i,j} = d_x\left( A_x g \, f^x\right)_{i,j}  + d_y\left( A_y g \, f^y\right)_{i,j}  , \quad \nabla_h\cdot \big( g \boldsymbol{f} \big) \in \mC_{\mathrm{p},x}(\Omega) .  
	\label{divergence-1} 
	\end{equation} 
For any $\phi \in \mC_{\mathrm{p},x}^{+}(\Omega)$, the discrete gradient is introduced as $\nabla_h \phi := ( D_x \phi , D_y \phi)^T$, with $D_x \phi\in {\mathcal E}_{\mathrm{p},x}(\Omega)$ and $D_y \phi\in {\mathcal N}_{\mathrm{p},x}^{+}(\Omega)$. For $g\in \mC_{\mathrm{p},x}^{+}(\Omega)$, the following term is defined: 
	\begin{equation} 
\nabla_h\cdot \big( g \nabla_h \phi \big)_{i,j} =  d_x\left( A_x g \, D_x \phi \right)_{i,j}  + d_y\left( A_y g \, D_y \phi \right)_{i,j} , \quad \nabla_h\cdot \big( g \nabla_h \phi \big) \in \mC_{\mathrm{p},x}(\Omega) . 
	\label{divergence-2}  
	\end{equation}
If $g\equiv 1$, the discrete Laplacian operator becomes the standard 5-point stencil. 

For two grid functions $f, g \in \mC_{\mathrm{p},x}(\Omega)$, the discrete $\ell^2$ inner product and the associated norm are defined as 
	\begin{equation}
 (f,g):= h^2 \sum_{i=0}^{N-1} \sum_{j=0}^N \, w_j f_{i,j} g_{i,j} , \quad w_j = \left\{ 
 	\begin{array}{l} 
1 , \, \, \, 1 \le j \le N-1 , 
   \\ 
\frac12 , \, \, \, j=0, N , 
	\end{array} 
\right.  \quad  \| f \|_{2} := \sqrt{ (f,f) } .  
  \label{inner product-1} 
	\end{equation} 
The mean-zero grid function space is introduced as $\mathring\mC_{\mathrm{p},x}(\Omega):=\left\{ f \in \mC_{\mathrm{p},x}(\Omega) \ \middle| \ 0 = \overline{f} :=  \frac{1}{| \Omega|} (f,{\bf 1}) \right\}$, where $|\Omega| = 1$ is the area of $\Omega$. Similarly, for two vector grid functions $\boldsymbol{f} = ( f^x , f^y)^T$ and  $\boldsymbol{g} = ( g^x , g^y )^T$, with $f^x, g^x \in {\mathcal E}_{\mathrm{p},x}(\Omega)$ and $f^y, g^y\in {\mathcal N}_{\mathrm{p},x}(\Omega)$, the corresponding discrete inner product is defined as follows, with the same notation as the scaler case: 
	\begin{equation}
\eipvec{\boldsymbol{f} }{\boldsymbol{g} } : = \eipx{f^x}{g^x} + \eipy{g^y}{g^y} ,  \, \,  \eipx{f^x}{g^x} := (a_x (f^x g^x),1) , \, \,   \eipy{f^y}{g^y} :=  h^2 \sum_{i,j=0}^{N-1} \, f^y_{i,j+\hf} g^y_{i,j+\hf}  . 
     \label{inner product-2} 
	\end{equation}	
In addition to the discrete $\| \, \cdot \, \|_{2}$ norm, the discrete $\ell^p$ and maximum norms turn out to be  
	\[
\| f \|_p := ( | f |^p , 1 )^\frac1p , \, \, \, \nrm{f}_\infty := \max_{i,j}\left| f_{i,j}\right|, 
\quad \forall f\in \mC_{\mathrm{p},x}(\Omega) , \, \, 1 \le p < + \infty .
	\]

For a grid function $f\in \mC_{\mathrm{p},x}^{+}(\Omega)$, the differential derivatives with respect to the second variable, that is, $y$ or $j$, have to be evaluated at the physical  boundaries $\Gamma_{T}$ and $\Gamma_{B}$. To accomplish this, the following operators are used for any $f\in \mC_{\mathrm{p},x}^{+}(\Omega)$: $\tilde{D}_y f_{i,0} := \frac{f_{i,1}-f_{i,-1}}{2h},  \quad 	\tilde{D}_y f_{i,N} :=  \frac{f_{i,N+1}-f_{i,N-1}}{2h}$. As a consequence, the homogeneous Neumann boundary condition is enforced as follows: 
\begin{align} 
  & 
0 = \tilde{D}_y f_{i,0} = \frac{f_{i,1}-f_{i,-1}}{2h} \quad \iff \quad f_{i, - 1}= f_{i,1} ,  \quad 
\forall f\in \mC_{\mathrm{p},x}^{+}(\Omega) , \label{Neumann 1}
\\
  & 
0 =  \tilde{D}_y f_{i,N} = \frac{f_{i,N+1}-f_{i,N-1}}{2h} \quad \iff \quad f_{i,N+1}= f_{i,N-1} , \quad 
\forall f\in \mC_{\mathrm{p},x}^{+}(\Omega) .  \label{Neumann 2}
\end{align}

On the top and bottom boundary sections $\Gamma_{T}$, $\Gamma_{B}$, the following one-dimensional periodic grid function spaces are utilized
	\[
\mC_{\mathrm{p},x}(\Gamma_T) = \mC_{\mathrm{p},x}(\Gamma_B) = \mC_{\mathrm{p},x}(\Gamma) := \left\{ \varphi_{i}  \ \middle| \ \varphi_{i} = \varphi_{i+\alpha N}, \ \forall \, \alpha,i \in \mathbb{Z} \right\}.	
	\]
The corresponding inner product and $\ell^2$ norm on the  boundary become 
	\begin{equation*}
(f,g)_\Gamma := h \sum_{i=0}^{N-1} f_{i} g_{i} , \quad \nrm{ f }_{2, \Gamma} := \sqrt{ (f,g)_\Gamma}  , 
  \quad  \forall  f, g \in \mC_{\mathrm{p},x}(\Gamma) . 
	\end{equation*} 
Similarly, the discrete $\ell^p$ and maximum norms on the boundary section are given by  
	\begin{equation*} 
\| f \|_{p, \Gamma} := ( | f |^p , 1 )_\Gamma^\frac1p , \, \, \, \nrm{f}_{\infty, \Gamma} := \max_{i} \left| f_i \right|, 
\quad \forall f\in \mC_{\mathrm{p},x}(\Gamma) , \, \, 1 \le p < + \infty .
	\end{equation*} 
The operators $A_x,D_x$ act in a natural way on the space $\mC_{\mathrm{p},x}(\Gamma)$ and we suppress the formulas.

The following summation-by-parts formulas could be easily proved using the techniques found in many existing works and elsewhere.

	\begin{lem}  \cite{guo16, wise09a} 
	\label{lemma1} 	
For any $\psi, \phi, g \in \mC_{\mathrm{p},x}^{+}(\Omega)$, and any $\boldsymbol{f} = (f^x, f^y)^T$, with $f^x\in \mathcal{E}_{\mathrm{p},x}(\Omega)$ and $f^y\in\mathcal{N}_{\mathrm{p},x}^+(\Omega)$, the following summation by parts formulas are valid: 
\begin{align} 
\ciptwo{\psi}{\nabla_h\cdot\boldsymbol{f}} & =  - \eipvec{\nabla_h \psi}{ \boldsymbol{f}} + ( a_yf_{\cdot , N}^y , \psi_{\cdot , N} )_\Gamma - ( a_yf_{\cdot , 0}^y , \psi_{\cdot , 0} )_\Gamma , 
	\label{lemma 1-0-1}  
	\\
\ciptwo{\psi}{\nabla_h\cdot \left( g \nabla_h \phi \right)}  & =  -  \eipvec{\nabla_h \psi }{ {\cal A}_h g \nabla_h\phi} +  ( a_y(A_yg D_y\phi)_{\cdot , N},\psi_{\cdot , N} )_\Gamma  
  -  ( a_y(A_yg D_y\phi)_{\cdot , 0},\psi_{\cdot , 0} )_\Gamma ,
	\label{lemma 1-0-2}
	\end{align}
where we have used the notation: 
$\eipvec{\nabla_h \psi }{ {\cal A}_h g \nabla_h\phi} := \eipx{D_x\psi}{A_x g D_x\phi} + \eipy{D_y\psi}{A_y gD_y\phi}$.
In particular, if $g \equiv 1$, the following identity is available: 
\begin{align} 
\ciptwo{\psi}{\Delta_h \phi} = -  \eipvec{\nabla_h \psi }{ \nabla_h\phi} +  ( \tilde{D}_y\phi_{\cdot , N},\psi_{\cdot , N} )_\Gamma  -  ( \tilde{D}_y\phi_{\cdot , 0} , \psi_{\cdot , 0} )_\Gamma .
	\label{lemma 1-0-3} 
\end{align}
\end{lem}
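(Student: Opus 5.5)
The plan is to verify the three identities by direct index manipulation, treating the $x$- and $y$-contributions separately, since the discrete divergence \eqref{divergence-1} splits additively and the weighted inner product \eqref{inner product-1} factorizes as a sum over rows $j$ with weights $w_j$. Identity \eqref{lemma 1-0-2} is then \eqref{lemma 1-0-1} applied to the vector $\boldsymbol{f} = (A_x g\, D_x\phi,\ A_y g\, D_y\phi)^T$. Its components lie in $\mathcal{E}_{\mathrm{p},x}(\Omega)$ and $\mathcal{N}^+_{\mathrm{p},x}(\Omega)$, because $\phi,g\in\mC^+_{\mathrm{p},x}(\Omega)$ supply the ghost values at $j=-1,N+1$. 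Identity \eqref{lemma 1-0-3} is the case $g\equiv 1$, together with the observation
\[
a_y(D_y\phi)_{i,0}=\tfrac12\bigl(D_y\phi_{i,\hf}+D_y\phi_{i,-\hf}\bigr)=\frac{\phi_{i,1}-\phi_{i,-1}}{2h}=\tilde D_y\phi_{i,0},
\]
and similarly at $j=N$. So the real work is \eqref{lemma 1-0-1}.

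\emph{The $x$-part.} For each fixed row $j$, periodicity in $i$ gives the exact discrete summation by parts
\[
h\sum_{i=0}^{N-1}\psi_{i,j}\,\frac{f^x_{i+\hf,j}-f^x_{i-\hf,j}}{h}=-h\sum_{i=0}^{N-1}\frac{\psi_{i+1,j}-\psi_{i,j}}{h}\,f^x_{i+\hf,j},
\]
with no boundary term, obtained by shifting the index in the second sum. Multiplying by $h\,w_j$ and summing over $j=0,\dots,N$ reproduces $-\eipx{D_x\psi}{f^x}$, because $\eipx{\cdot}{\cdot}=(a_x(\cdot\,\cdot),1)$ carries the same row weights $w_j$, and periodic averaging $a_x$ preserves the row sums.

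\emph{The $y$-part.} This is the step where care is needed, because the trapezoidal weights $w_0=w_N=\tfrac12$ must produce exactly the averaged boundary terms $a_y f^y$. For fixed $i$, I will compare
\[
h\sum_{j=0}^N w_j\psi_{i,j}\bigl(f^y_{i,j+\hf}-f^y_{i,j-\hf}\bigr)
\]
with the interior sum
\[
-h\sum_{j=0}^{N-1}(\psi_{i,j+1}-\psi_{i,j})f^y_{i,j+\hf}
=h\sum_{j=0}^{N-1}\psi_{i,j}f^y_{i,j+\hf}-h\sum_{j=1}^{N}\psi_{i,j}f^y_{i,j-\hf}.
\]
For $1\le j\le N-1$ the coefficients of $\psi_{i,j}$ in the two expressions agree.

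At $j=0$, the first expression has coefficient $\tfrac12\bigl(f^y_{\hf}-f^y_{-\hf}\bigr)$, while the interior sum has $f^y_{\hf}$. The discrepancy is $-\tfrac12\bigl(f^y_{\hf}+f^y_{-\hf}\bigr)=-a_yf^y_{i,0}$. At $j=N$, the coefficients are $\tfrac12\bigl(f^y_{N+\hf}-f^y_{N-\hf}\bigr)$ versus $-f^y_{N-\hf}$, giving a discrepancy of $+a_yf^y_{i,N}$.

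Summing over $i$ and multiplying by $h$ then yields
\[
-\eipy{D_y\psi}{f^y}+(a_yf^y_{\cdot,N},\psi_{\cdot,N})_\Gamma-(a_yf^y_{\cdot,0},\psi_{\cdot,0})_\Gamma .
\]
Here $\eipy{\cdot}{\cdot}$ sums over the staggered indices $j+\hf$ for $j=0,\dots,N-1$ with unit weights, and this matches the interior sum exactly.

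Adding the $x$- and $y$-parts gives \eqref{lemma 1-0-1}, and the two specializations described above finish the proof. The only genuine obstacle is the bookkeeping at $j=0,N$ just performed. It works precisely because the half weights combine with the ghost-layer fluxes $f^y_{\cdot,-\hf}$ and $f^y_{\cdot,N+\hf}$ into the averages $a_y f^y$.
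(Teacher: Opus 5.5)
Your proof is correct. It is the standard direct verification: the paper gives no proof of its own and simply cites earlier works, where these identities are obtained by the same row-by-row summation by parts. In that argument the trapezoidal weights $w_0=w_N=\tfrac12$ combine with the ghost fluxes $f^y_{\cdot,-\hf}$, $f^y_{\cdot,N+\hf}$ into $a_y f^y$, and the $g$-weighted and Laplacian identities follow as you describe. One bookkeeping point is cosmetic: your per-row expressions already carry the factor $h$, so summing over $i$ alone gives the $y$-part, and the extra factor $h$ belongs only to the $h$-free discrepancies $-a_yf^y_{i,0}$ and $+a_yf^y_{i,N}$, which become the $(\cdot,\cdot)_\Gamma$ terms.
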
 
	
Next, an important positive, linear operator will be needed in the later analysis. First observe that for any $\varphi \in\mathring\mC_{\mathrm{p},x}(\Omega)$, there is a unique solution $\psi \in \mathring\mC_{\mathrm{p},x}^+(\Omega)$ to the problem 
$-\Delta_h \psi = \varphi$,	subject to the homogeneous Neumann boundary condition, \eqref{Neumann 1} and \eqref{Neumann 2}. The solution operator for this problem defines a one-to-one mapping from $\mathring\mC_{\mathrm{p},x}(\Omega)$ to $\mathring\mC_{\mathrm{p},x}(\Omega)$. Let $L_h: \mathring\mC_{\mathrm{p},x}(\Omega) \to \mathring\mC_{\mathrm{p},x}(\Omega)$ be the (one-to-one, onto) forward operator in the problem, by incorporating the boundary conditions into the definition of the operator. In fact, the operator $L_h$ is exactly $-\Delta_h$, except near the top and bottom physical boundaries, where the definition of the operator differs from the negative discrete Laplacian in order to incorporate the discrete homogeneous Neumann boundary conditions. Specifically, for any $\psi$, we see that  
\begin{equation}
L_h \psi_{i,j} = 
	\begin{cases}
\ -(\psi_{i+1,0} + \psi_{i-1,0} + 2\psi_{i,1} - 4\psi_{i,0})/h^2, & j = 0,
	\\
\ -(\psi_{i+1,N} + \psi_{i-1,N} + 2\psi_{i,N-1} - 4\psi_{i,N})/h^2, & j = N,
	\\
\ -\Delta_h \psi_{i,j}, & \mbox{otherwise}.
	\end{cases} \label{L_h}
\end{equation}
Another interpretation for this definition is that we have removed the ghost layer by incorporating the boundary conditions. The following result is available.
\begin{prop} \cite{CH-DBC-2024} 
The above-defined operator $L_h: \mathring\mC_{\mathrm{p},x}(\Omega) \to \mathring\mC_{\mathrm{p},x}(\Omega)$ is positive and symmetric in the sense that
\begin{align} 
  & 
    (\psi,L_h\phi) = (L_h\psi,\phi), \quad \forall \ \psi,\phi\in \mathring\mC_{\mathrm{p},x}(\Omega) , 
\\
  & \mbox{and} \quad 
(\psi,L_h\psi) = \|\nabla_h \psi\|_2^2 > 0 , \quad \forall \ \psi \in \mathring\mC_{\mathrm{p},x}(\Omega), \quad \psi\not\equiv 0.
\end{align}	
\end{prop}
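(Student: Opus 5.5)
The plan is to reduce both claims to the summation-by-parts identity \eqref{lemma 1-0-3} of Lemma \ref{lemma1}, applied to the ghost-layer extension of each grid function. Given $\phi \in \mathring\mC_{\mathrm{p},x}(\Omega)$, I extend it to $\mC_{\mathrm{p},x}^{+}(\Omega)$ by setting $\phi_{i,-1} := \phi_{i,1}$ and $\phi_{i,N+1} := \phi_{i,N-1}$. This is exactly the homogeneous Neumann condition \eqref{Neumann 1}--\eqref{Neumann 2}, so $\tilde D_y \phi_{\cdot,0} = \tilde D_y \phi_{\cdot,N} = 0$.

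The first step is to check that $L_h \phi = -\Delta_h \phi$ holds pointwise for this extended function, including the rows $j=0$ and $j=N$. At $j=0$ the five-point stencil involves $\phi_{i,1} + \phi_{i,-1} = 2\phi_{i,1}$, which reproduces the first case of \eqref{L_h}; the row $j=N$ is handled the same way.

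The second step applies \eqref{lemma 1-0-3} with $\psi$ also extended by even reflection (only its values on $j=0,\dots,N$ enter the left-hand side):
\begin{equation*}
(\psi, L_h\phi) = -(\psi,\Delta_h\phi) = \eipvec{\nabla_h\psi}{\nabla_h\phi} - (\tilde D_y\phi_{\cdot,N},\psi_{\cdot,N})_\Gamma + (\tilde D_y\phi_{\cdot,0},\psi_{\cdot,0})_\Gamma = \eipvec{\nabla_h\psi}{\nabla_h\phi}.
\end{equation*}
The boundary terms vanish because $\phi$ satisfies the discrete Neumann condition. One point needs care here. The vector inner product $\eipy{\cdot}{\cdot}$ sums only over $j+\hf$ with $j=0,\dots,N-1$, that is, over interior edges. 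This is compatible with the weight $w_j=\hf$ on the boundary rows in \eqref{inner product-1}, and it is precisely what makes Lemma \ref{lemma1} produce the $a_y$- and $\tilde D_y$-type boundary terms. I take the lemma as given, so nothing further is needed at this point.

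From the identity, symmetry follows immediately, since the right-hand side is symmetric in $\psi$ and $\phi$. Setting $\psi=\phi$ gives $(\phi,L_h\phi)=\|\nabla_h\phi\|_2^2 \ge 0$.

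The only real obstacle is strict positivity. Suppose $\|\nabla_h\phi\|_2=0$. Then $D_x\phi=0$ on all $x$-edges and $D_y\phi=0$ on all interior $y$-edges $j+\hf$, $0\le j\le N-1$. Since the grid is connected through these edges (periodic in $x$, and consecutive rows $0,\dots,N$ are linked by the interior $y$-edges), $\phi$ must be constant on $\{0,\dots,N-1\}\times\{0,\dots,N\}$. The mean-zero condition $(\phi,\mathbf 1)=0$, whose weights $w_j$ are positive, then forces that constant to be zero. Hence $\phi\equiv0$, and so $(\phi,L_h\phi)>0$ whenever $\phi\not\equiv0$.

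The same argument also gives injectivity of $L_h$ on the mean-zero space. Surjectivity onto $\mathring\mC_{\mathrm{p},x}(\Omega)$ follows by dimension counting, together with the observation that $(L_h\phi,\mathbf 1)=\eipvec{\nabla_h\mathbf 1}{\nabla_h\phi}=0$, so $L_h$ maps into mean-zero functions. This justifies the one-to-one, onto claim made before the proposition.
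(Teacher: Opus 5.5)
Your proof is correct: the even ghost-point extension makes $L_h\phi=-\Delta_h\phi$ on every row including $j=0,N$, \eqref{lemma 1-0-3} then gives $(\psi,L_h\phi)=(\nabla_h\psi,\nabla_h\phi)$ with vanishing boundary terms, and the graph connectivity together with the positive weights $w_j$ and the mean-zero constraint yields strict positivity (your remark that the identity also holds for $\psi=\mathbf{1}$, so $L_h$ lands in the mean-zero space, is a nice bonus). The paper gives no proof of its own and only cites \cite{CH-DBC-2024}; your argument is the standard summation-by-parts verification through Lemma \ref{lemma1} that this citation stands in for.
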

The discrete $H^{-1}$ norm follows the definition of $L_h$: 
	\begin{equation} 
( \varphi_1 , \varphi_2 )_{-1} :=  \left( \varphi_1 , L_h^{-1} (\varphi_2)\right), \quad \| \varphi  \|_{-1} := \sqrt{( \varphi , \varphi )_{-1}} , \quad 
  \forall \varphi_1 ,\varphi_2,\varphi \in\mathring\mC_{\mathrm{p},x}(\Omega) . 
	\end{equation} 

Similarly, the following space is introduced for the vector functions: 
\begin{equation}
	\mV_{\mathrm{p},x}(\Omega) := \left\{ \bu=(u^x,u^y)^T \ \middle| \ u^x \in {\mathcal G}_{\mathrm{p},x}^+(\Omega),\ u^y \in \mC_{\mathrm{p},x}(\Omega)   \right\}.
\end{equation}
The no-penetration, no-slip condition on $\Gamma_B$ and $\Gamma_T$ is enforced as
$$
u^y_{i,N}=u^y_{i,0}=0, \quad u^x_{i+1/2,N+\hf}+ u^x_{i+1/2,N-\frac12} = u^x_{i+1/2,-\hf} + u^x_{i+1/2,\hf}=0.
$$
For $\bu, \bv \in \mV_{\mathrm{p},x}(\Omega)$, the corresponding vector inner product and $\| \cdot \|_2$ norm could be defined in a similar fashion as~\eqref{inner product-2}. In addition, some interpolation strategies are needed in the nonlinear term evaluation. For $\bu=(u^x, u^y)^T$, $\bv=(v^x, v^y)^T$, located at the staggered mesh points $(x_{i+\hf}, y_{j+\hf})$, $(x_{i}, y_j)$, respectively, and variables $\phi$, $\mu$, the following terms are evaluated as
\begin{align}
\bu \cdot \nabla_h \bv & =\binom{\ u_{i+\hf, j+\hf}^x \tilde{D}_x v_{i+\hf, j+\hf}^x+A_{x y} u_{i+\hf,j+\hf}^y \tilde{D}_y v_{i+\hf,j+\hf}^x\ }{\ a_{xy} u_{i,j}^x \tilde{D}_x v_{i,j}^y+ u_{i, j}^y \tilde{D}_y v_{i,j}^y\ }, \\
\nabla_h \cdot\left(\bu \bv^T\right) & =\binom{\ \tilde{D}_x (u^x v^x)_{i+\hf,j+\hf} + D_y (A_x u^y a_y v^x )_{i+\hf,j+\hf}\ }{\ d_x  ( a_y u^x A_x v^y )_{i,j}+ \tilde{D}_y \left(u^y v^y\right)_{i,j}\ }, \\
A_h \phi \nabla_h \mu &= \binom{\ A_{xy} \phi_{i+\hf, j+\hf} (A_y D_x \mu )_{i+\hf, j+\hf}  }{\ \phi_{i,j} \tilde{D}_y \mu_{i,j} }, \\
 \nabla_h \cdot (A_h \phi \bu) & = D_x (A_x \phi a_y u^x )_{i,j} + \tilde{D}_y (\phi u^y)_{i,j} , 
\end{align}
where the averaging operators $A_{xy}$, $a_{xy}$, and the wide stencil difference operators $\tilde{D}_x$ and $\tilde{D}_y$ could be similarly defined. 
The following discrete Poincar{\'e} inequality is an important tool in the later analysis. 

\begin{lem} \cite{chen24b} \label{lem:Poincare}
    The Poincar{\'e} inequality 
 is valid, i.e., there exists positive constant $C_0$ independent with $\dt$ and $h$ such that, 
        for $\phi \in \mC_{\mathrm{p},x}(\Omega)$ with $\overline{\phi}=0$, we have
        \begin{equation}
            \|\phi\|_2 \leq C_0 \|\nabla_h \phi\|_2 , \quad \| \phi \|_{-1, h} \le C_0 \| \phi \|_2 . 
             \label{Poincare 1}
        \end{equation}
\end{lem}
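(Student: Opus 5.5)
We prove the two inequalities of Lemma~\ref{lem:Poincare} separately. For the first one we use the spectral structure of the operator $L_h$, and the second follows from the first by a duality argument.

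\textbf{First inequality via $L_h$.} By the Proposition above, $L_h$ is symmetric and positive on $\mathring\mC_{\mathrm{p},x}(\Omega)$ with respect to the weighted inner product \eqref{inner product-1}, and $(\psi,L_h\psi)=\|\nabla_h\psi\|_2^2$. Hence $L_h$ has an orthonormal eigenbasis in this space, with eigenvalues $0<\lambda_1\le\lambda_2\le\cdots$. Expanding $\phi$ in this basis gives
\begin{equation*}
\|\nabla_h\phi\|_2^2=\sum_k\lambda_k|\hat\phi_k|^2\ge\lambda_1\|\phi\|_2^2 .
\end{equation*}
So $C_0=\lambda_1^{-1/2}$ works, provided $\lambda_1$ is bounded below independently of $h$. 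This uniform lower bound is the main step.

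\textbf{Lower bound on $\lambda_1$ (main obstacle).} The operator separates: it is periodic in $x$, and in $y$ it carries the Neumann structure of \eqref{L_h}, with half weights $w_j$ at $j=0,N$. Its eigenfunctions are therefore $e^{2\pi i k x_i}\cos(\ell\pi y_j)$, $0\le k\le N-1$, $0\le \ell\le N$, with eigenvalues
\begin{equation*}
\frac{4}{h^2}\sin^2(\pi k h)+\frac{4}{h^2}\sin^2\Big(\frac{\ell\pi h}{2}\Big).
\end{equation*}
We check this directly against the boundary rows of \eqref{L_h}: the factor $2\psi_{i,1}$ is exactly what the even reflection of the cosine produces. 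The mean-zero condition excludes only $(k,\ell)=(0,0)$. Using $\sin z\ge 2z/\pi$ on $[0,\pi/2]$, and treating $k$ near $N$ symmetrically with $k$ near $0$, the smallest remaining eigenvalue is at least $\min(16,4)=4$. This bound is independent of $h$, so $C_0=1/2$ suffices. If the explicit spectral check at the boundary rows became delicate, a fallback is a direct 1D argument in $y$: write $\phi_{i,j}-\phi_{i,j'}$ as a telescoping sum of $hD_y\phi$, apply Cauchy--Schwarz, average over $j'$, and then treat the column means with the same argument in $x$.

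\textbf{Second inequality by duality.} Let $\psi=L_h^{-1}\phi$, which is mean zero. Then
\begin{equation*}
\|\phi\|_{-1}^2=(\phi,\psi)\le\|\phi\|_2\|\psi\|_2\le C_0\|\phi\|_2\|\nabla_h\psi\|_2 .
\end{equation*}
Since $\|\nabla_h\psi\|_2^2=(\psi,L_h\psi)=(\psi,\phi)=\|\phi\|_{-1}^2$, dividing by $\|\phi\|_{-1}$ gives $\|\phi\|_{-1}\le C_0\|\phi\|_2$.
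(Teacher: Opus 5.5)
Your proof is correct. The paper gives no argument of its own for this lemma; it simply cites \cite{chen24b}. So the useful comparison is with the proof of Lemma~\ref{inverse inequality lemma}, which uses the same Fourier-in-$x$, cosine-in-$y$ expansion but routes it differently: it passes to a continuous extension $f_F$, compares eigenvalues via $\frac{2}{\pi}|\nu_\ell|\le|\mu_\ell|$, and then invokes the continuous Poincar\'e inequality.

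Your route stays entirely discrete. You diagonalize $L_h$ directly and check that the factor $2\psi_{i,1}$ in the boundary rows of \eqref{L_h} makes $\cos(\ell\pi y_j)$ an eigenvector. You then bound the smallest nonzero eigenvalue explicitly, which gives the concrete constant $C_0=\frac12$ rather than an unspecified one. The duality step then yields the $H^{-1}$ bound with the same constant; equivalently, $\|\phi\|_{-1}^2=\sum_k\lambda_k^{-1}|\hat\phi_k|^2\le\lambda_1^{-1}\|\phi\|_2^2$.

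The price is that the argument depends on the tensor-product structure of the uniform grid on the rectangle. Your telescoping fallback would survive less structured settings.

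One small point is worth writing out. The half weights $w_0=w_N=\frac12$ make $L_h$ symmetric, so the $y$-cosine vectors (which have distinct eigenvalues) are mutually orthogonal. Hence the $N(N+1)$ vectors $e^{2\pi i k x_i}\cos(\ell\pi y_j)$ are linearly independent and, since their number equals the dimension of the space, they exhaust the spectrum. You can take real and imaginary parts to work over $\mathbb{R}$.
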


Moreover, an inverse inequality-style estimate is established in the following lemma, which is essential for the maximum norm bound in the later numerical analysis.
\begin{lem} \label{inverse inequality lemma}
    For any grid function $f\in \mC_{\mathrm{p},x}(\Omega)$ with $\overline{f} =0$, we have
    \begin{equation}
        \| f \|_\infty \leq C_\delta  h^{-\delta} \|\nabla_h f  \|_2 ,  \quad \forall \, \delta> 0 ,   \quad 
        \| f \|_6 \le C_1 \| \nabla_h f \|_2 ,  \quad \|f\|_{3}\leq C_1 h^{-\frac{1}{3}}\|f\|_2,  
        \label{inverse ineq}
    \end{equation}
    in which the constants $C_\delta>0$ and $C_1$ are independent on $h$. 
\end{lem}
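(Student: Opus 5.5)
Each of the three bounds in \eqref{inverse ineq} will be derived from a discrete Sobolev or inverse estimate applied to a suitable extension of $f$. The standard tool is a discrete Fourier/cosine expansion. In $x$ the grid function is periodic, so a discrete Fourier series is available. In $y$ the grid is the node set $j=0,\dots,N$ with trapezoidal weights $w_j$, so a discrete cosine transform is the natural basis. This basis is exactly the eigenbasis of $L_h$ in \eqref{L_h}, the operator incorporating homogeneous Neumann conditions. Equivalently, I will reflect $f$ evenly across $y=0$ and periodize in $y$ with period $2$. This gives a doubly periodic grid function $\hat f$ on a $2N\times N$ (rescaled) periodic mesh, with $\|\hat f\|_p^p = 2\|f\|_p^p$ (the weights $\frac12$ at $j=0,N$ account precisely for the shared rows) and $\|\nabla_h \hat f\|_2^2 = 2\|\nabla_h f\|_2^2$. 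The mean-zero property is also preserved. The problem is thereby reduced to the fully periodic case, where all three inequalities are known results (e.g., in the references \cite{chen24b, guo16}).

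For the periodic case I will argue as follows. Write $\hat f=\sum_{k\neq 0}\hat f_k e^{2\pi i k\cdot x}$ over the discrete frequencies $|k_1|,|k_2|\lesssim N$. The discrete gradient has symbol of size $\lambda_k\sim |k|$ uniformly, since $\frac{2}{h}\sin(\pi k h)\ge 4|k|$ for $|k h|\le \frac12$.

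\textbf{First bound.} By Cauchy--Schwarz,
\[
\|\hat f\|_\infty\le \sum_{k\ne0}|\hat f_k| \le \Big(\sum_{0<|k|\lesssim N}|k|^{-2}\Big)^{1/2}\Big(\sum |k|^2|\hat f_k|^2\Big)^{1/2}.
\]
The first factor is $O(\sqrt{\ln N})$, hence bounded by $C_\delta h^{-\delta}$ for any $\delta>0$.

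\textbf{Second bound.} The estimate $\|f\|_6\le C_1\|\nabla_h f\|_2$ is the discrete two-dimensional Sobolev embedding $H^1\hookrightarrow L^6$. I will prove it via the discrete Gagliardo--Nirenberg/Ladyzhenskaya-type argument: apply the one-dimensional bound $\max_i |g_i|^3\le \sum_i |D_x(g^3)|h + |\bar g|^3$ row-wise and column-wise, then use $D_x(g^3)\lesssim (A_x g^2)|D_xg|$. Combining with Cauchy--Schwarz gives $\|f\|_6^3\lesssim \|f\|_4^2\|\nabla_h f\|_2 + \|f\|_2^3$, and the resulting bound closes with the Poincar\'e inequality of Lemma~\ref{lem:Poincare}. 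Alternatively, the periodic Fourier proof of $H^{1/3}\hookrightarrow L^6$ can be quoted after extension.

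\textbf{Third bound.} The inequality $\|f\|_3\le C_1h^{-1/3}\|f\|_2$ is a pure inverse inequality and needs no mean-zero assumption. It follows from H\"older's inequality, $\|f\|_3^3\le\|f\|_\infty\|f\|_2^2$, together with the elementary bound $\|f\|_\infty\le h^{-1}\|f\|_2$. The latter holds because a single grid point carries weight at least $h^2/2$, so $|f_{i,j}|^2 h^2/2\le\|f\|_2^2$. This yields $\|f\|_3\le 2^{1/6}h^{-1/3}\|f\|_2$.

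The main obstacle I expect is the careful bookkeeping of the reflection near the physical boundary. The discrete gradient $D_y f$ lives on $\mathcal N^+$ and includes the ghost-layer edges $j=-\hf, N+\hf$. I must check that the $\|\nabla_h f\|_2$ used here is the one with the $\eipy{\cdot}{\cdot}$ sum over $j=0,\dots,N-1$ only, which matches the reflected periodic function's gradient exactly (up to the factor $2$). This makes the estimate independent of ghost values. Once this identification is settled, and the mean-zero condition with weights $w_j$ is shown to correspond to the zero mode of the cosine expansion, the remaining estimates are routine.
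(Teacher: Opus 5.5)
Your proposal is correct, but it proves the lemma by a different and more elementary route than the paper.

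\textbf{What is shared.} The even reflection in $y$ is the same device as the paper's discrete cosine transform. Your bookkeeping is right: the halved weights at $j=0,N$, together with the fact that $(\cdot,\cdot)_{\rm y}$ sums only over $j=0,\dots,N-1$, give $\|\hat f\|_p^p=2\|f\|_p^p$ and $\|\nabla_h\hat f\|_2^2=2\|\nabla_h f\|_2^2$, and they preserve the zero mean.

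\textbf{The $\ell^\infty$ bound.} This is where the routes genuinely differ. The paper builds the continuous trigonometric interpolant $f_F$ and shows $\tfrac{2}{\pi}\|\nabla f_F\|_{L^2}\le\|\nabla_h f\|_2$ by comparing symbols. It then chains the continuous embedding $H^{1+\delta}\hookrightarrow L^\infty$, the inverse inequality $\|f_F\|_{H^{1+\delta}}\le Ch^{-\delta}\|f_F\|_{H^1}$, and the continuous Poincar\'e inequality. You stay at the discrete level and apply Cauchy--Schwarz to the Fourier coefficients against a logarithmically divergent sum. This gives the sharper bound $C\sqrt{|\ln h|}\,\|\nabla_h f\|_2$ without any continuous Sobolev theory.

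\textbf{The $\ell^6$ and $\ell^3$ bounds.} The paper gives no argument here; it cites \cite{Chen20-7} and calls the $\ell^3$ bound a standard inverse inequality. You give self-contained proofs. The paper's route is shorter because it imports continuous embeddings through $f_F$; yours needs no external input.

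\textbf{Details to tighten.} None of these is fatal.
\begin{itemize}
\item In two dimensions the Fourier alternative needs $H^{2/3}\hookrightarrow L^6$, not $H^{1/3}$.
\item In the slicing argument, the row and column mean terms are bounded via Jensen by $\|f\|_3^3\le\|f\|_2\|f\|_4^2$, not by $\|f\|_2^3$. A function concentrated on a single grid row shows the mean term alone is not $O(\|f\|_2^3)$.
\item The factor $\|f\|_4^2$ is not removed by Poincar\'e alone. Use $\|f\|_4^2\le\|f\|_2^{1/2}\|f\|_6^{3/2}$ and absorb, which gives $\|f\|_6\le C\|f\|_2^{1/3}(\|\nabla_h f\|_2+\|f\|_2)^{2/3}$. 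Only then apply Poincar\'e.
\item The pointwise bound is $\|f\|_\infty\le\sqrt2\,h^{-1}\|f\|_2$. Your final constant $2^{1/6}$ already reflects this, so only the intermediate statement needs correcting.
\end{itemize}
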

\begin{proof}
In the $x$-direction, the periodic boundary condition naturally accommodates the standard discrete Fourier transform. In the $y$-direction, an even extension is employed to enable an application of a discrete Fourier cosine transform. Assuming $N = 2K + 1$ is odd, and the case for an even $N$ could be analogously treated. The corresponding discrete transformation for $\phi$ turns out to be 
\begin{align}
    f_{j,k}=\sum_{\ell=-K}^{K} \sum_{m=0}^{N} \alpha_m \hat{F}^N_{\ell,m} {\rm e}^{2\pi i \ell x_j} \cos (m\pi y_k), \quad \alpha_m = \begin{cases}
 1 &  m \neq 0, \\
 2^{-\frac12} & m=0,
\end{cases} \label{inverse ineq proof 1}
\end{align}
where $x_j=j h$, $y_k=kh$ with $h=1/N$. In more details, $\hat{F}^N_{\ell,m}$ is the Fourier coefficient under the combination of the Fourier transformation in the $x$-direction and the cosine transformation in the $y$-direction. In turn, an extension is made to obtain a continuous function
\begin{align}
    f_F(x,y) = \sum_{\ell=-K}^{K} \sum_{m=0}^{N} \alpha_m \hat{F}^N_{\ell,m} 
    {\rm e}^{2\pi i \ell x} \cos (m\pi y).\label{inverse ineq proof 2}
\end{align}
Parseval’s identity (at both the discrete and continuous levels) implies that
\begin{align}
    \| f \|_{2}^2 & = h^2\sum_{j,k}| f_{j,k}|^2 = \frac{1}{2} \sum_{\ell,m}|\hat{F}^N_{\ell,m}|^2, \quad 
    \| f_F \|_{L^2}^2  = \frac{1}{2} \sum_{\ell,m}|\hat{F}^N_{\ell,m}|^2 . \label{inverse ineq proof 3} 
\end{align}

In terms of a comparison between the discrete and continuous gradients, the following Fourier expansions indicate that
\begin{align}
    & (D_x f)_{j+\frac{1}{2},k} = \frac{f_{j+1,k} - f_{j,k}}{h} 
    = \sum_{\ell=-K}^{K} \sum_{m=0}^{N} \alpha_m \mu_{\ell}\hat{F}^N_{\ell,m} 
    {\rm e}^{2\pi i \ell x_{j+\frac{1}{2}}} \cos (m\pi y_k), \label{inverse ineq proof 5} \\
    & \partial_x f_F(x,y) = \sum_{\ell=-K}^{K} \sum_{m=0}^{N} \alpha_m \nu_{\ell} \hat{F}^N_{\ell,m} 
    {\rm e}^{2\pi i \ell x} \cos (m\pi y) ,  \mbox{where} \label{inverse ineq proof 6}
\\ 
  & 
    \mu_{\ell} =  \frac{{\rm e}^{\pi i \ell h}- {\rm e}^{-\pi i \ell h}}{h} = \frac{2i\sin(\pi\ell h)}{h}, \quad 
    \nu_{\ell} = 2\pi i \ell. \label{inverse ineq proof 7}
\end{align}
Again, the Parseval’s identity yields
\begin{align}
    \|D_x f \|_2^2 & = \frac{1}{2} \sum_{\ell,m} |\mu_{\ell}|^2 |\hat{F}^N_{\ell,m}|^2, \quad 
    \|\partial_x f_F\|_{L^2}^2 = \frac{1}{2} \sum_{\ell,m} |\nu_{\ell}|^2 |\hat{F}^N_{\ell,m}|^2. 
    \label{inverse ineq proof 8} 
\end{align}
A comparison of Fourier eigenvalues between $|\mu_{\ell}|$ and $|\nu_{\ell}|$ reveals that
\begin{equation}
   \frac{2}{\pi}|\nu_{\ell}| \leq |\mu_{\ell}| \leq |\nu_{\ell}|.\label{inverse ineq proof 10} 
\end{equation}
This in turn indicates that
\begin{equation}
   \frac{2}{\pi}\|\partial_x f_F\|_{L^2} \leq \|D_x f \|_2 \leq \|\partial_x f_F\|_{L^2}.\label{inverse ineq proof 11} 
\end{equation}
For the cosine part, similar comparison estimates could be derived in the same manner: 
\begin{equation}
   \frac{2}{\pi}\|\nabla f_F\|_{L^2} \leq \|\nabla_h f \|_2 \leq \|\nabla f_F\|_{L^2} . 
   \label{inverse ineq proof 12} 
\end{equation}
In addition, the discrete Fourier transformation, periodic in the $x$ direction and cosine transformation in the $y$ direction, leads to the identity between the discrete average of $f$ and the continuous average of $f_F$:
\begin{equation}
    \overline{f} := \frac{1}{|\Omega|}(f , 1) = \alpha_0 \hat{F}^N_{0,0} = \frac{1}{|\Omega|} \int_{\Omega} f_F(x) dx =:\overline{f_F} = 0 . \label{inverse ineq proof 15} 
\end{equation}
Based on this fact, we see that
\begin{equation}
    \| f \|_{\infty}  \leq \| f_F \|_{L^\infty} \le C_\delta \| f_F \|_{H^{1+\delta}} 
    \le C_\delta h^{-\delta} \| f_F \|_{H^1} \le C C_\delta h^{-\delta} \| \nabla f_F \|_{L^2} 
    \le  C C_\delta h^{-\delta} \| \nabla_h f \|_2  , \label{inverse ineq proof 16} 
\end{equation}
in which the 2-D Sobolev embedding has been applied in the second step, the third step corresponds to an inverse inequality (since $f_F \in {\cal B}^K$), the fourth step makes use of the Poincar\'e inequality (due to the fact that $\overline{f_F} =0$), and~\eqref{inverse ineq proof 12} has been applied in the last step. As a result, the first inequality in \eqref{inverse ineq} has been proved. 

The second and third inequalities in \eqref{inverse ineq} have been proved in an existing work \cite{Chen20-7} and the third inequality is a standard 2-D inverse inequality. This in turn finishes the proof of Lemma~\ref{inverse inequality lemma}.
\end{proof}

\subsection{The fully discrete numerical scheme}
For simplicity of presentation, we choose $F(\phi)=G(\phi)=R (\phi) := \frac{1}{4}\phi^4 - \frac{1}{2} \phi^2$ for the energy densities (in \eqref{energy-CH-1}) in the analysis below. As mentioned in Remark 2.2 below, the analysis could be applied to other choices of the energy densities. 
The following finite difference scheme is proposed for the system \eqref{CHNS-DBC-1}-\eqref{CHNS-DBC-6} with $F'(\phi)=G'(\phi)=\phi^3-\phi $, using the convex-splitting approach: given $\phi^n$, $\bu^n$, find $\phi^{n+1}$, $\mu^{n+1}\in \mC_{\mathrm{p},x}^+(\Omega)$, $\bu^{n+1}\in \mV_{\mathrm{p},x}(\Omega)$ such that
\begin{align}
& \frac{\hat{\bu}^{n+1}-\bu^n}{\dt} + b_h(\bu^n, \hat{\bu}^{n+1}) + \nabla_h p^n-\nu \Delta_h\hat{\bu}^{n+1}=-\gamma A_h\phi^n \nabla_h \mu^{n+1}, \label{scheme-CHDBC-1} \\
& \frac{\phi^{n+1}-\phi^{n}}{\dt} + \nabla_h\cdot\left( A_h\phi^n\hat{\bu}^{n+1} \right)=\Delta_h \mu^{n+1},  \quad \mu^{n+1}=(\phi^{n+1})^3-\phi^n-\varepsilon^2\Delta_h\phi^{n+1},  \label{scheme-CHDBC-2}\\
& \tilde{D}_{y} \mu^{n+1}_{i, 0} = \tilde{D}_{y} \mu^{n+1}_{i, N}  = 0  ,  \quad 
  \phi^{n+1}_{i,0} = \phi^{B,n+1}_{i} , \  \  \phi^{n+1}_{i,N} = \phi^{T,n+1}_{i}, \label{scheme-CHDBC-3} \\
& \frac{\phi^{B,n+1} - \phi^{B,n}}{\dt} = -\mu^{n+1}_B, \quad \mu^{n+1}_B=(\phi^{B,n+1})^3 - \phi^{B,n} - \varepsilon^2 \tilde{D}_{y} \phi^{n+1}_{\cdot, 0}, \label{scheme-CHDBC-4} \\
& \frac{\phi^{T,n+1} - \phi^{T,n}}{\dt} = -\mu^{n+1}_T, \quad \mu^{n+1}_B = (\phi^{T,n+1})^3 - \phi^{T,n} + \varepsilon^2 \tilde{D}_{y} \phi^{n+1}_{\cdot, N}, \label{scheme-CHDBC-5}	\\
& \frac{\bu^{n+1}-\hat{\bu}^{n+1}}{\dt} + \nabla_h(p^{n+1}-p^n)=0, \quad \nabla_h \cdot \bu^{n+1}=0, \label{scheme-CHDBC-6} 
\end{align}
with boundary condition $\hat{\bu}^{n+1} \mid_{\Gamma} =\boldsymbol{0}$, $(\bu^{n+1} \cdot \boldsymbol{n} ) \mid_\Gamma =0$. The bilinear form $b_h(\cdot,\cdot)$ is the discretization of the continuous $b(\cdot,\cdot)$: 
\begin{equation} 
    b(\bu,\bv):= \bu \cdot \nabla \bv, \quad b_h(\bu,\bv):=\frac{1}{2}\left( \bu \cdot \nabla_h \bv + \nabla_h\cdot(\bu \bv^T) \right) . \label{bilinear form}
\end{equation}

\subsection{Unconditionally unique solvability}
As a nonlinear scheme, the unique solvability is the basis of any subsequent analysis. To proceed with the unique solvability analysis, we implicitly define a linear operator $\mL_h^n$ as follows. Assume that the variables $\bu^n$, $\phi^n$ and $p^n$ are fixed. For a given field $\mu$, $\bv=\mL_h^n \mu$ is determined by the following $x$-periodic, discrete convection-diffusion equation: 
\begin{align}
    \frac{\bv-\bu^n}{\dt} + b_h(\bu^n, \bv) + \nabla_h p^n-\nu \Delta_h \bv =-\gamma A_h\phi^n \nabla_h \mu,   \label{solvability 1.1}
\end{align}
with no-penetration, no-slip boundary for $\bv$, 
in the $y$-direction. Given the variables $(\bu^n, \phi^n, p^n)$ at time level $t^n$, it is noticed that $\hat{\bu}^{n+1} = \mL_h^n \mu^{n+1}$, and $\bu^{n+1}$ is a further Helmholtz projection of $\hat{\bu}^{n+1}$ into the divergence-free space. Subsequently, a substitution of $\hat{\bu}^{n+1} = \mL_h^n \mu^{n+1}$ into \eqref{scheme-CHDBC-2} leads to the following equation for $\phi^{n+1}$ and $\mu^{n+1}$:
\begin{align}
    \frac{\phi^{n+1}-\phi^{n}}{\dt} + \nabla_h\cdot\left( A_h\phi^n \mL_h^n \mu^{n+1} \right)=\Delta_h \mu^{n+1},  \quad \mu^{n+1}=(\phi^{n+1})^3-\phi^n-\varepsilon^2\Delta_h\phi^{n+1}. \label{solvability 1.2}
\end{align}
Furthermore, \eqref{solvability 1.2} could be rewritten as
\begin{align}
    \frac{\phi^{n+1}-\phi^{n}}{\dt} + \mG_h^n \mu^{n+1} =0,  \quad \mG_h^n \mu :=\nabla_h\cdot\left( A_h\phi^n \mL_h^n \mu \right) - \Delta_h \mu. \label{solvability 1.3}
\end{align}
The property of the linear operator $\mG_h^n$ is essential for the unique solvability analysis. The following two lemmas demonstrate that it is monotone and therefore invertible; the proofs are very similar to the one presented in~\cite{chen22b}, and the technical details are skipped for the sake of brevity. 
\begin{lem} \cite{chen22b} \label{lem:monotonic}
    The linear operator $\mG_h^n$ is monotone in sense that for any $\mu_1$, $\mu_2$,
    \begin{equation}
        (\mG_h^n \mu_1-\mG_h^n \mu_2, \mu_1-\mu_2) \geq \| \nabla_h(\mu_1-\mu_2) \|_2^2 \geq 0. \label{monotonic}
    \end{equation}
    In addition, if we require $\overline{\mu_1}=\overline{\mu_2}=0$, equality is realized if and only if $\mu_1=\mu_2$. Therefore, the operator $\mG_h^n$ is invertible.
\end{lem}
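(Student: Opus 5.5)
The plan is to use the fact that, although $\mL_h^n$ is only affine in $\mu$ (it depends on the fixed data $\bu^n$, $p^n$), the difference $\mL_h^n\mu_1-\mL_h^n\mu_2$ depends linearly on $\mu_1-\mu_2$. This reduces \eqref{monotonic} to an energy identity for a homogeneous discrete convection--diffusion problem.

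Set $\mu:=\mu_1-\mu_2$ and $\bv:=\mL_h^n\mu_1-\mL_h^n\mu_2$. Subtracting the two copies of \eqref{solvability 1.1} eliminates $\bu^n$ and $p^n$, so $\bv$ satisfies
\[
\frac{\bv}{\dt}+b_h(\bu^n,\bv)-\nu\Delta_h\bv=-\gamma A_h\phi^n\nabla_h\mu ,
\]
with homogeneous no-penetration, no-slip boundary values. The left side of \eqref{monotonic} then splits as $(\nabla_h\cdot(A_h\phi^n\bv),\mu)-(\Delta_h\mu,\mu)$.

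For the diffusion part, apply \eqref{lemma 1-0-3}. The homogeneous discrete Neumann condition $\tilde D_y\mu=0$ at $j=0,N$ removes the boundary terms, so $-(\Delta_h\mu,\mu)=\|\nabla_h\mu\|_2^2$.

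For the coupling part, use a summation by parts adapted to the staggered interpolation, modeled on \eqref{lemma 1-0-1}:
\[
(\nabla_h\cdot(A_h\phi^n\bv),\mu)=-(\bv,A_h\phi^n\nabla_h\mu).
\]
The boundary contributions should vanish because $v^y_{i,0}=v^y_{i,N}=0$. Next, take the discrete inner product of the $\bv$-equation with $\bv$. The skew-symmetric form \eqref{bilinear form} of $b_h$ is designed so that $(b_h(\bu^n,\bv),\bv)=0$, again by summation by parts using the no-slip values of $\bv$. This yields
\[
-\gamma(A_h\phi^n\nabla_h\mu,\bv)=\frac{1}{\dt}\|\bv\|_2^2+\nu\|\nabla_h\bv\|_2^2\ge 0 ,
\]
where the viscous term has also been integrated by parts with homogeneous data. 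Combining the pieces gives
\[
(\mG_h^n\mu_1-\mG_h^n\mu_2,\mu)=\frac{1}{\gamma}\Big(\frac{1}{\dt}\|\bv\|_2^2+\nu\|\nabla_h\bv\|_2^2\Big)+\|\nabla_h\mu\|_2^2\ge\|\nabla_h\mu\|_2^2 ,
\]
which is \eqref{monotonic}.

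For the equality case, suppose equality holds. Then $\nabla_h\mu=0$, so $\mu$ is constant, and the mean-zero requirement forces $\mu=0$; the converse is trivial.

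For invertibility, first check that $\mG_h^n$ maps into the mean-zero space. Taking $\psi\equiv1$ in \eqref{lemma 1-0-1}--\eqref{lemma 1-0-3}, with the boundary terms killed by $v^y|_\Gamma=0$ and $\tilde D_y\mu|_\Gamma=0$, gives $\overline{\mG_h^n\mu}=0$. Since $\mG_h^n$ is affine and injective on the finite-dimensional space $\mathring\mC_{\mathrm{p},x}(\Omega)$, its linear part is bijective, so $\mG_h^n$ is invertible there.

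I expect the main obstacle to be the exact discrete duality between $\nabla_h\cdot(A_h\phi\,\cdot)$ and $A_h\phi\nabla_h$, together with the skew-symmetry of $b_h$, on the staggered mesh. Both identities require the averaging operators $A_{xy}$, $a_y$ and the wide stencils $\tilde D_x$, $\tilde D_y$ to be exact transposes of each other in the weighted inner products \eqref{inner product-1}--\eqref{inner product-2}, including the half-weights $w_0,w_N$ and the ghost-point conventions for $u^x$ at $j=\pm\hf$. My first step would be to check these transposition identities componentwise in the $x$- and $y$-directions separately, as in \cite{chen22b}, before assembling the energy identity above.
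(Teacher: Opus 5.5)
Your proposal is correct and follows the argument the paper defers to in \cite{chen22b}: subtract the two velocity problems so that $\bv$ solves a homogeneous convection--diffusion system, test it with $\bv$ using the skew-symmetry of $b_h$, and use the discrete duality $(\nabla_h\cdot(A_h\phi^n\bv),\mu)=-(A_h\phi^n\nabla_h\mu,\bv)$ (the same identity the paper relies on in \eqref{energy 1.2}) to get $(\mG_h^n\mu_1-\mG_h^n\mu_2,\mu)=\gamma^{-1}\big(\dt^{-1}\|\bv\|_2^2+\nu\|\nabla_h\bv\|_2^2\big)+\|\nabla_h\mu\|_2^2$. Your treatment of the equality case and of invertibility on the mean-zero space (an affine, injective map on a finite-dimensional space) is also sound.
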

As a direct consequence of Lemma \ref{lem:monotonic}, the following result for the inverse operator is valid.
\begin{prop} \cite{chen22b} \label{prop:inverse}
    The inverse operator $(\mG_h^n)^{-1}$ is also monotone:
    \begin{align} 
      & 
        ((\mG_h^n)^{-1}(\phi_1-\phi_2), \phi_1-\phi_2) \geq \|\nabla_h (\mG_h^n)^{-1}(\phi_1-\phi_2)\|_2^2 \geq \frac{1}{C_0^2} \| (\mG_h^n)^{-1}(\phi_1-\phi_2)\|_2^2  ,  \label{inverse}
\\
    & \mbox{and} \quad 
         \| (\mG_h^n)^{-1}(\phi_1-\phi_2)\|_2 \leq C_0^2 \|\phi_1-\phi_2\|_2,  \label{inverse 0}
    \end{align}
    for any $\phi_1$, $\phi_2$ with $\overline{\phi_1}=\overline{\phi_2}=0$. The constant $C_0$ is associated with the discrete Poincar{\'e} inequality \eqref{Poincare 1}. In addition, the equality holds if and only if $\phi_1=\phi_2$.
\end{prop}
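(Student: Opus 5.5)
The plan is to translate Lemma~\ref{lem:monotonic} for $\mG_h^n$ into the corresponding statement for its inverse via the substitution $\mu_k := (\mG_h^n)^{-1}\phi_k$, $k=1,2$. Lemma~\ref{lem:monotonic} gives invertibility of $\mG_h^n$ on the mean-zero space. Two preliminary facts are needed first.

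First, $\mG_h^n$ maps into mean-zero grid functions. For any $\mu$, take the inner product of $\mG_h^n\mu$ with ${\bf 1}$ and apply the summation-by-parts formulas of Lemma~\ref{lemma1} with $\psi\equiv 1$. The gradient terms vanish, and the boundary terms vanish as well: $\tilde D_y\mu=0$ on $\Gamma$ for the Laplacian part, and the velocity $\mL_h^n\mu$ vanishes on $\Gamma$ (no-penetration) for the convective part. Second, $\mG_h^n$ is linear, since $\mL_h^n$ is affine in $\mu$ and the differences of $\mL_h^n$ are linear. Hence $\mG_h^n\mu_1-\mG_h^n\mu_2 = \mG_h^n(\mu_1-\mu_2)$ in the sense used by the lemma, up to the affine offset that cancels in differences. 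Consequently, for mean-zero $\phi_1,\phi_2$ there are unique mean-zero $\mu_1,\mu_2$ with $\mG_h^n\mu_k=\phi_k$, and $\mu_1-\mu_2 = (\mG_h^n)^{-1}(\phi_1-\phi_2)$ under the linear interpretation.

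With this, \eqref{inverse} follows by substitution. Indeed,
$$((\mG_h^n)^{-1}(\phi_1-\phi_2),\phi_1-\phi_2) = (\mu_1-\mu_2, \mG_h^n\mu_1-\mG_h^n\mu_2) \ge \|\nabla_h(\mu_1-\mu_2)\|_2^2$$
by \eqref{monotonic}. Because $\overline{\mu_1-\mu_2}=0$, the discrete Poincar\'e inequality \eqref{Poincare 1} then yields $\|\nabla_h(\mu_1-\mu_2)\|_2^2 \ge C_0^{-2}\|\mu_1-\mu_2\|_2^2$.

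For \eqref{inverse 0}, combine this lower bound with Cauchy--Schwarz on the left side:
$$C_0^{-2}\|\mu_1-\mu_2\|_2^2 \le \|\mu_1-\mu_2\|_2\,\|\phi_1-\phi_2\|_2 .$$
Dividing by $\|\mu_1-\mu_2\|_2$, and treating the zero case trivially, gives $\|\mu_1-\mu_2\|_2\le C_0^2\|\phi_1-\phi_2\|_2$.

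The equality characterization comes from the chain of inequalities. Equality throughout forces $\|\nabla_h(\mu_1-\mu_2)\|_2=0$. Since $\mu_1-\mu_2$ has mean zero, Poincar\'e gives $\mu_1=\mu_2$, and then $\phi_1=\mathcal{G}_h^n\mu_1=\phi_2$. Conversely, $\phi_1=\phi_2$ clearly gives equality.

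The main obstacle is the preliminary step: verifying that $\mG_h^n$ maps into the mean-zero space and is invertible there. This requires checking that the boundary terms in Lemma~\ref{lemma1} cancel for the staggered flux $A_h\phi^n\mL_h^n\mu$. Here the no-penetration condition $v^y_{i,0}=v^y_{i,N}=0$ is used, together with the wide-stencil form $\tilde D_y(\phi v^y)$ at the boundary rows.
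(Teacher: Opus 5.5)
Your argument is correct and is exactly the ``direct consequence of Lemma~\ref{lem:monotonic}'' that the paper invokes (deferring details to \cite{chen22b}): set $\mu_k=(\mG_h^n)^{-1}\phi_k$, apply \eqref{monotonic}, then use the discrete Poincar\'e inequality and Cauchy--Schwarz.

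Two small caveats. First, the equality clause should be read as the pairing $((\mG_h^n)^{-1}(\phi_1-\phi_2),\phi_1-\phi_2)$ vanishing, since equality in a single step (e.g.\ the Poincar\'e step with a sharp $C_0$) can occur with $\phi_1\neq\phi_2$; with that reading, $0\ge C_0^{-2}\|\mu_1-\mu_2\|_2^2$ gives $\mu_1=\mu_2$. Second, the mean-zero-range preliminary can simply be taken from the invertibility already asserted in Lemma~\ref{lem:monotonic}. As you sketch it, $v^y=0$ on $\Gamma$ alone would not cancel the wide-stencil boundary remainder of $\tilde D_y(\phi^n v^y)$; that also requires an odd ghost extension of the flux $\phi^n v^y$ itself.
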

It is noticed that the operator $\mathcal{G}_h^n$ defined in \eqref{solvability 1.3} only involves the velocity and chemical potential, which are subject to homogeneous no-slip and Neumann boundary conditions, respectively. Consequently, its monotonicity analysis does not incorporate any information regarding dynamic boundary. This aspect must be carefully addressed in the subsequent analysis. To address this issue, we introduce the Browder–Minty lemma, which plays a crucial role in establishing unique solvability.
\begin{lem}[Browder-Minty \cite{Browder-Minty-1,Browder-Minty-2}]
Let $X$ be a real, reflexive Banach space and suppose $X'$ is its dual. Let $T: X \rightarrow X'$ be: (i) bounded; (ii) continuous; (iii) coercive, that is
\begin{align}
    \frac{\langle T(u), u\rangle}{\|u\|_X} \rightarrow+\infty \quad \text { as } \quad\|u\|_X \rightarrow+\infty;
\end{align}
and (iv) monotone. Then for any $g \in X'$ there exists a solution $u \in X$ of the equation $T(u)=g$. Furthermore, if the operator $T$ is strictly monotone, then the solution $u$ is unique.
\end{lem}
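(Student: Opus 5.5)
We follow the classical Galerkin--Minty route. Since existence for general reflexive $X$ needs weak compactness, we first treat finite-dimensional spaces and then pass to the limit. (In the application below only the finite-dimensional case is used, where $X$ is a space of grid functions and the last limiting step is trivial.) Fix $g\in X'$. Replacing $T$ by $T-g$ keeps $T$ bounded, continuous and monotone. It also keeps it coercive, since $\langle g,u\rangle/\|u\|_X\le \|g\|_{X'}$ stays bounded. So it suffices to solve $T(u)=0$.

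\emph{Step 1 (finite dimensions).} Let $X_m\subset X$ be finite-dimensional and let $P_m^*$ denote restriction of functionals to $X_m$. By coercivity there is $R>0$ with $\langle T(v),v\rangle>0$ whenever $\|v\|_X=R$. The continuous map $v\mapsto P_m^*T(v)$ on the Euclidean space $X_m$ then satisfies the ``acute angle'' condition on the sphere of radius $R$. A standard consequence of Brouwer's fixed point theorem gives $u_m\in X_m$ with $\|u_m\|_X\le R$ and $\langle T(u_m),v\rangle=0$ for all $v\in X_m$. To see this, argue by contradiction: if $P_m^*T$ has no zero in the ball, the map $v\mapsto -R\,P_m^*T(v)/|P_m^*T(v)|$ sends the ball into itself. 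Its fixed point $v$ has norm $R$, and pairing with $v$ gives $\langle T(v),v\rangle<0$, contradicting the choice of $R$. If $X$ is finite-dimensional, take $X_m=X$ and we are done.

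\emph{Step 2 (Galerkin limit and Minty's trick).} For general $X$, choose an increasing family of finite-dimensional subspaces whose union is dense (a directed net if $X$ is nonseparable). The bound $\|u_m\|_X\le R$ is uniform in $m$, and boundedness of $T$ gives $\|T(u_m)\|_{X'}\le C$. Reflexivity then lets us extract $u_m\rightharpoonup u$ in $X$ and $T(u_m)\rightharpoonup^* \chi$ in $X'$. The Galerkin equations give $\langle \chi,v\rangle=0$ on the dense union, hence $\chi=0$, and also $\langle T(u_m),u_m\rangle=0$. For any $v\in X$, monotonicity gives $\langle T(u_m)-T(v),u_m-v\rangle\ge0$. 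Passing to the limit, where the only nonlinear term $\langle T(u_m),u_m\rangle$ is identically zero, yields $\langle -T(v),u-v\rangle\ge0$ for all $v$.

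Now take $v=u-tw$ with $t>0$ and $w\in X$. Dividing by $t$ gives $\langle T(u-tw),w\rangle\ge0$. Letting $t\downarrow0$ and using continuity of $T$ (hemicontinuity suffices) gives $\langle T(u),w\rangle\ge0$ for all $w$. Replacing $w$ by $-w$ shows $T(u)=0$.

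\emph{Step 3 (uniqueness).} If $T(u_1)=T(u_2)=g$, then $\langle T(u_1)-T(u_2),u_1-u_2\rangle=0$, and strict monotonicity forces $u_1=u_2$.

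The main obstacle is Step 2, identifying the weak limit of $T(u_m)$ with $T(u)$. Weak convergence does not commute with the nonlinear operator, and Minty's monotonicity trick is exactly what circumvents this. The other subtle point is the Brouwer-based existence in Step 1, where coercivity is what produces the invariant ball.
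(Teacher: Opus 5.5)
The paper does not prove this lemma. It cites Browder and Minty, uses the result as a black box, and applies it only with $X=X'=\mathbb{R}^{N(N+1)}$ carrying the discrete $\ell^2$ inner product. Your argument is the standard Galerkin--Brouwer--Minty proof, and it is correct. It also makes explicit a point the citation hides, at the end of your Step 1: in finite dimensions, existence follows from continuity and coercivity alone through the ``acute angle'' consequence of Brouwer's theorem. So in the paper's application monotonicity is needed only for uniqueness, and there, as your Step 3 shows, it must be strict.

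Two small slips should be fixed.

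\emph{Sign in Step 2.} Substituting $v=u-tw$ into $\langle -T(v),u-v\rangle\ge 0$ gives $-t\langle T(u-tw),w\rangle\ge 0$, i.e.\ $\langle T(u-tw),w\rangle\le 0$, not $\ge 0$. Since you then replace $w$ by $-w$, the conclusion $T(u)=0$ is unaffected.

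\emph{Normalization in Step 1.} If $|\cdot|$ is a Euclidean norm on $X_m$ while the ball is $\{\|v\|_X\le R\}$, the map $v\mapsto -R\,P_m^*T(v)/|P_m^*T(v)|$ need not send that ball into itself. There are two easy repairs.
\begin{itemize}
\item Normalize instead by the $X$-norm of the Riesz representative $\tilde P(v)\in X_m$ of $P_m^*T(v)$. The $X$-ball in $X_m$ is compact and convex, so Brouwer still applies. The fixed point then satisfies $\langle T(v),v\rangle=-R|\tilde P(v)|^2/\|\tilde P(v)\|_X<0$, giving the contradiction.
\item Choose $R$ so that $\langle T(v),v\rangle>0$ for all $\|v\|_X\ge R$, which coercivity gives. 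The Galerkin identity $\langle T(u_m),u_m\rangle=0$ then by itself forces $\|u_m\|_X<R$. So the uniform bound does not depend on which ball you use in the Brouwer step.
\end{itemize}
In the paper's Hilbert-space setting the normalization issue does not arise.
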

Then we proceed into the proof of unique solvability.
\begin{theorem}\label{thm:solvability}
    Given $\phi^n \in $ with $\overline{\phi^n}=\beta_0$, there exists a unique solution $\phi^{n+1}$ to the numerical scheme \eqref{scheme-CHDBC-1}-\eqref{scheme-CHDBC-6}, or equivalently \eqref{inverse 1.2}, with $\overline{\phi^{n+1}}=\beta_0$.
\end{theorem}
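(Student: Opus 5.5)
We plan to reduce the fully coupled scheme to a single nonlinear equation for $\phi^{n+1}$ and then apply the Browder--Minty lemma on a finite-dimensional affine space. First, mass conservation. Taking the discrete inner product of \eqref{solvability 1.3} with ${\bf 1}$ gives $\overline{\phi^{n+1}} = \overline{\phi^n} = \beta_0$. Indeed, $(\nabla_h\cdot(A_h\phi^n \hat{\bu}^{n+1}),1)=0$ by the no-penetration condition $\hat u^y|_\Gamma=0$, and $(\Delta_h\mu^{n+1},1)=0$ by the homogeneous Neumann condition on $\mu^{n+1}$ together with \eqref{lemma 1-0-3}.

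We then write $\phi^{n+1}=\beta_0+\varphi$ with $\varphi\in\mathring\mC_{\mathrm{p},x}(\Omega)$. Since $\mG_h^n$ annihilates constants, Lemma~\ref{lem:monotonic} and Proposition~\ref{prop:inverse} allow us to invert it on the mean-zero part. This recasts the bulk equation as
\[
(\mG_h^n)^{-1}\Big(\frac{\phi^{n+1}-\phi^n}{\dt}\Big) + \mu^{n+1} - \overline{\mu^{n+1}} = 0 ,
\]
with $\mu^{n+1}=(\phi^{n+1})^3-\phi^n-\varepsilon^2\Delta_h\phi^{n+1}$. The constant $\overline{\mu^{n+1}}$ acts as a Lagrange multiplier. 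The boundary values $\phi^{B,n+1},\phi^{T,n+1}$ are not independent unknowns: they are the $j=0,N$ rows of $\phi^{n+1}$, and the ghost values $\phi^{n+1}_{i,-1},\phi^{n+1}_{i,N+1}$ are determined implicitly by \eqref{scheme-CHDBC-4}--\eqref{scheme-CHDBC-5}. Eliminating the ghost points via these relations, the unknown is simply $\varphi$ on the nodes $j=0,\dots,N$, which forms a finite-dimensional Hilbert space $X$.

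Next we define $T:X\to X'$ by pairing the residual with a test function $\psi\in X$ in the weighted inner product. The test function is arbitrary, including on the boundary rows. The pairing consists of three contributions:
\begin{itemize}
\item $\frac1\dt((\mG_h^n)^{-1}(\beta_0+\varphi-\phi^n),\psi)$;
\item $((\beta_0+\varphi)^3,\psi) + \varepsilon^2(\nabla_h\varphi,\nabla_h\psi)$;
\item boundary terms $\frac{1}{2}\cdot$ wait—rather $(\frac{\phi^{B}-\phi^{B,n}}{\dt}+(\phi^B)^3,\psi_{\cdot,0})_\Gamma$ and the analogous top term, which arise from the surface equations.
\end{itemize}
The explicit terms $\phi^n$ and $\phi^{B,n},\phi^{T,n}$ go into the right-hand side $g$.

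The key step is the summation-by-parts identity \eqref{lemma 1-0-3}. It turns $-\varepsilon^2(\Delta_h\phi^{n+1},\psi)$ into $\varepsilon^2(\nabla_h\phi^{n+1},\nabla_h\psi)$ plus boundary terms $\mp\varepsilon^2(\tilde D_y\phi^{n+1},\psi)_\Gamma$, and these boundary terms are exactly cancelled or absorbed by the normal derivative terms in $\mu_B^{n+1},\mu_T^{n+1}$. We must check the weights carefully here: the $w_j=\frac12$ weights in \eqref{inner product-1} are what make the wide-stencil $\tilde D_y$ match the boundary pairing. This is the step we expect to be the main obstacle, since the combined bulk--surface operator has to be shown to be the gradient-like, monotone object, and the ghost-point elimination must be consistent.

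Once the identity is established, monotonicity follows from three facts:
\begin{itemize}
\item $(\mG_h^n)^{-1}$ is monotone on mean-zero functions by Proposition~\ref{prop:inverse};
\item $s\mapsto s^3$ is monotone, in both the bulk and surface inner products;
\item the terms $\varepsilon^2\|\nabla_h\cdot\|_2^2$ and $\frac1\dt\|\cdot\|_{2,\Gamma}^2$ are nonnegative.
\end{itemize}
Strict monotonicity comes from the $\varepsilon^2\|\nabla_h(\varphi_1-\varphi_2)\|_2^2$ term combined with the Poincar\'e inequality, Lemma~\ref{lem:Poincare}, on mean-zero differences.

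Coercivity comes from the quartic terms and $\varepsilon^2\|\nabla_h\varphi\|_2^2$, which dominate the linear-in-$\varphi$ contributions. We will use $(\mG_h^n)^{-1}$ bounded by \eqref{inverse 0} and Cauchy--Schwarz for this. Boundedness and continuity are automatic, since $X$ is finite-dimensional and $T$ is polynomial plus linear. The Browder--Minty lemma then yields existence and uniqueness of $\varphi$. The velocity $\hat{\bu}^{n+1}=\mL_h^n\mu^{n+1}$ and the projected $\bu^{n+1},p^{n+1}$ are recovered uniquely from the linear solves, since the Helmholtz projection is unique up to the pressure constant. This gives the unique $\phi^{n+1}$ with $\overline{\phi^{n+1}}=\beta_0$.
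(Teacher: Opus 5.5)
Your proposal is correct and follows essentially the same route as the paper: rewrite the scheme through $(\mG_h^n)^{-1}$, apply the summation-by-parts identity \eqref{lemma 1-0-3} and substitute the boundary equations for $\pm\varepsilon^2\tilde D_y\phi$ to obtain good-sign surface terms, then check coercivity and monotonicity and invoke Browder--Minty. Your explicit reduction to the mean-$\beta_0$ affine space, with $\overline{\mu^{n+1}}$ acting as a Lagrange multiplier, and your Poincar\'e-based strict monotonicity make precise points the paper leaves implicit; note only that the Cauchy--Schwarz bound on the $(\mG_h^n)^{-1}$ term is quadratic rather than linear in $\varphi$, so it must be absorbed by the quartic term or replaced by the monotonicity bound \eqref{inverse}, as the paper does.
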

\begin{proof}
    By \eqref{solvability 1.3}, numerical scheme \eqref{scheme-CHDBC-2} could be rewritten as a nonlinear system
\begin{equation}
    \mathcal{F}_h^n(\phi):=(\mG_h^n)^{-1}\frac{\phi-\phi^{n}}{\dt} + (\phi)^3 - \phi^{n} - \varepsilon^2 \Delta_h \phi=0, \label{inverse 1.2}
\end{equation}
coupled with the boundary condition \eqref{scheme-CHDBC-4}, \eqref{scheme-CHDBC-5}: for $\phi^{T}=\phi_{\cdot,N}$ and $\phi^{B}=\phi_{\cdot,0}$,
\begin{align}
\frac{\phi^{T} - \phi^{T,n}}{\dt} = - \big( (\phi^{T})^3 - \phi^{T,n} + \varepsilon^2 \tilde{D}_{y} \phi_{\cdot, N} \big), \quad \frac{\phi^{B} - \phi^{B,n}}{\dt} = -\big( (\phi^{B})^3 - \phi^{B,n} - \varepsilon^2 \tilde{D}_{y} \phi_{\cdot, 0} \big). \label{inverse 1.3.1}
\end{align}
It is clear that $\mathcal{F}_h^n$ maps $\mathbb{R}^{N(N+1)}$ into $\mathbb{R}^{N(N+1)}$, with the imposed discrete boundary conditions. In turn, we set $X=X'=\mathbb{R}^{N(N+1)}$ equipped with the discrete $\ell_2$ norm $\|\cdot\|_2$. 

The continuity analysis (in the $\| \cdot \|_2$ norm) is straightforward, and the technical details are skipped for the sake of brevity.

To establish the coercive property of $\mathcal{F}_h^n(\phi_2)$, we calculate the inner product
\begin{align}
    \left( \mathcal{F}_h^n(\phi), \phi \right) = \dt^{-1} \big( (\mG_h^n)^{-1} ( \phi-\phi^{n} ) , \phi \big) + \|\phi\|_4^4 - \left( \phi^{n},\phi \right) - \varepsilon^2 \left( \Delta_h \phi,\phi \right). \label{solvability 1.7}
\end{align}
For the first term, an application of the monotonicity inequality \eqref{inverse} leads to
\begin{align}
    \dt^{-1} \big( (\mG_h^n)^{-1} ( \phi-\phi^{n} ), \phi \big) & = \dt^{-1} \big( (\mG_h^n)^{-1} ( \phi-\phi^{n} ) , \phi-\phi^{n} \big) + \dt^{-1} \big( (\mG_h^n)^{-1} ( \phi-\phi^{n} ), \phi^{n} \big) \nonumber \\
    & \geq \frac{1}{\dt C_0^2}\| (\mG_h^n)^{-1}(\phi-\phi^{n}) \|_2^2 - \frac{1}{2\dt C_0^2}\| (\mG_h^n)^{-1}(\phi-\phi^{n}) \|_2^2 - \frac{C_0^2}{2\dt} \|\phi^{n}\|_2^2 \nonumber \\
    & = \frac{1}{2\dt C_0^2}\| (\mG_h^n)^{-1}(\phi-\phi^{n}) \|_2^2 - \frac{C_0^2}{2\dt} \|\phi^{n}\|_2^2. \label{solvability 1.7.1}
\end{align}
The second and third terms of the right hand of \eqref{solvability 1.7} could be handled by the quadratic inequality
\begin{align}
    \|\phi\|_4^4 - \left( \phi^{n},\phi \right) \geq 2\|\phi\|_2^2 - |\Omega| -\frac{1}{2}(\|\phi\|_2^2 + \|\phi^n\|_2^2) \geq \|\phi\|_2^2 - \frac{1}{2}\|\phi^n\|_2^2 - |\Omega|. \label{solvability 1.7.3}
\end{align}
The Laplacian term should be treated with the help of dynamical boundary condition:
\begin{align}
    - \varepsilon^2 \left( \Delta_h \phi,\phi \right) = \varepsilon^2\|\nabla_h \phi\|_2^2 - \varepsilon^2 (\phi^T, \tilde{D}_{y} \phi_{\cdot, N})_\Gamma + \varepsilon^2(\phi^B, \tilde{D}_{y} \phi_{\cdot, 0})_\Gamma.\label{solvability 1.7.4}
\end{align}
Based on the boundary condition \eqref{inverse 1.3.1} and an quadratic inequality, we see that 
\begin{align}
    - \varepsilon^2 (\phi^T, \tilde{D}_{y} \phi_{\cdot, N})_\Gamma &= \Big(\phi^T, (\phi^{T})^3 - \phi^{T,n} + \frac{\phi^{T} - \phi^{T,n}}{\dt} \Big)_\Gamma \nonumber \\
    & \geq \|\phi^{T}\|_{2,\Gamma}^2 - \frac{1}{2}\|\phi^{T,n}\|_{2,\Gamma}^2 - |\Gamma| + \frac{1}{2\dt}\left( \|\phi^{T}\|_{2,\Gamma}^2 - \|\phi^{T,n}\|_{2,\Gamma}^2 \right) \nonumber \\
    & \geq - \frac{1}{2}\|\phi^{T,n}\|_{2,\Gamma}^2 - |\Gamma| -\frac{1}{2\dt}\|\phi^{T,n}\|_{2,\Gamma}^2. \label{solvability 1.7.5}
\end{align}
Analogously, the following lower bound is available: 
\begin{align}
    \varepsilon^2(\phi^B, \tilde{D}_{y} \phi_{\cdot, 0})_\Gamma \geq - \frac{1}{2}\|\phi^{B,n}\|_{2,\Gamma}^2 - |\Gamma| -\frac{1}{2\dt}\|\phi^{B,n}\|_{2,\Gamma}^2. \label{solvability 1.7.6}
\end{align}
As a result, a substitution of \eqref{solvability 1.7.1}, \eqref{solvability 1.7.3}-\eqref{solvability 1.7.6} into \eqref{solvability 1.7} leads to the following estimate:
\begin{align}
    \frac{\left( \mathcal{F}_h^n(\phi), \phi \right)}{\|\phi\|_2} & \geq \|\phi\|_2 - \frac{1}{\|\phi\|_2} \Big( \frac{C_0^2}{2\dt} \|\phi^{n}\|_2^2 +\frac{1}{2}\|\phi^n\|_2  + |\Omega| +2|\Gamma|  \Big) \nonumber \\
    & \quad - \frac{1}{2\|\phi\|_2} \Big( \|\phi^{T,n}\|_{2,\Gamma}^2 + \|\phi^{B,n}\|_{2,\Gamma}^2 + \frac{1}{\dt}\|\phi^{T,n}\|_{2,\Gamma}^2 + \frac{1}{\dt}\|\phi^{B,n}\|_{2,\Gamma}^2 \Big), \label{solvability 1.7.7}
\end{align}
which in turn demonstrates the coercive property, $\frac{\left( \mathcal{F}_h^n(\phi), \phi \right)}{\|\phi\|_2} \rightarrow +\infty$, as $\|\phi\|_2 \rightarrow +\infty$.

The remaining work is focused on establishing a monotonicity condition. Given $\phi_1$, $\phi_2$, by denoting $\tilde{\phi}=\phi_1-\phi_2$, a direct calculation implies that 
\begin{align}
   \left( \mathcal{F}_h^n(\tilde{\phi}), \tilde{\phi} \right)& = \frac{1}{\dt} \big( (\mG_h^n)^{-1}\tilde{\phi} , \tilde{\phi} \big) + (  \phi_1^3 - \phi_2^3, \tilde{\phi} ) - \varepsilon^2 ( \Delta_h \tilde{\phi}, \tilde{\phi} ) \nonumber \\
   & \geq \frac{1}{\dt C_0^2} \|(\mG_h^n)^{-1}\tilde{\phi}\|_2^2 + \big(  \tilde{\phi}(\phi_1^2+\phi_1\phi_2+\phi_2^2), \tilde{\phi} \big) - \varepsilon^2 ( \Delta_h \tilde{\phi}, \tilde{\phi} ) \nonumber \\
   & \geq \varepsilon^2\|\nabla_h \tilde{\phi}\|_2^2 - \varepsilon^2 (\tilde{\phi}^T, \tilde{D}_{y} \tilde{\phi}_{\cdot, N})_\Gamma + \varepsilon^2(\tilde{\phi}^B, \tilde{D}_{y} \tilde{\phi}_{\cdot, 0})_\Gamma.\label{solvability 1.8.1}
\end{align}
Analogous to \eqref{solvability 1.7.5} and \eqref{solvability 1.7.6}, we have
\begin{align}
   - &\varepsilon^2 (\tilde{\phi}^T, \tilde{D}_{y} \tilde{\phi}_{\cdot, N})_\Gamma =  \big(\tilde{\phi}^T, (\phi^{T}_1)^3 - (\phi^{T}_2)^3 + \dt^{-1} \tilde{\phi}^T \big)_\Gamma \geq 0, \label{solvability 1.8.2}\\
  & \varepsilon^2(\tilde{\phi}^B, \tilde{D}_{y} \tilde{\phi}_{\cdot, 0})_\Gamma = \big(\tilde{\phi}^B, (\phi^{B}_1)^3 - (\phi^{B}_2)^3 + \dt^{-1} \tilde{\phi}^B \big)_\Gamma \geq 0. \label{solvability 1.8.3}
\end{align}
Then we arrive at
\begin{equation}
    \left( \mathcal{F}_h^n(\phi_1)-\mathcal{F}_h^n(\phi_2), \phi_1-\phi_2 \right) \geq 0.  \label{solvability 1.8.4}
\end{equation}
As a result, an application of the Browder-Minty lemma implies a unique solution to \eqref{inverse 1.2}, so that the unique solvability of the numerical scheme \eqref{scheme-CHDBC-1}-\eqref{scheme-CHDBC-6} has been established. 
\end{proof}
\begin{rem}
	In the proof of Theorem \ref{thm:solvability}, the discrete $\ell_2$ norm is taken for the Banach space, without considering the boundary norm. In fact, by denoting an extended $\ell_2$ norm, defined as $\|\phi\|_{2,*}^2:=\|\phi\|_{2}^2+\|\phi^T\|_{2,\Gamma}^2+\|\phi^B\|_{2,\Gamma}^2$, an equivalence inequality is available:  
	\begin{equation}
	    \|\phi\|_{2}^2 \le	\|\phi\|_{2,*}^2 \le (1+ Ch^{-\frac12}  ) \|\phi\|_{2}^2 , 
	\end{equation}
	where the surface parts have been extended to the whole domain, based on the inverse inequality. Therefore, the boundary $\ell_2$ norms $\|\phi^{T}\|_{2,\Gamma}^2$ and $\|\phi^{B}\|_{2,\Gamma}^2$ in \eqref{solvability 1.7.5} and \eqref{solvability 1.7.6} are neglected. The bulk $\ell_2$ norm in \eqref{solvability 1.7.3} is sufficient for the coercive analysis.
\end{rem}
\begin{rem}
	It should be emphasized that the free energy densities $F$ and $G$ appearing in \eqref{energy-CH-1} may not be restricted to polynomial forms. They may, in fact, be any functions with bounded second derivatives -- trigonometric functions, for instance. The analysis of unique solvability, together with the subsequent energy stability and convergence results, holds for all such functions. For the sake of clear presentation, this work specifically considers $G (\psi) = F (\psi) = 1/4 \psi^4 - 1/2 \psi^2$.
\end{rem}

\section{Unconditional total energy stability}  \label{sec:energy}
Now we establish an unconditional total energy stability of the proposed numerical scheme. The discrete energy is defined as (with $R(c)=\frac{1}{4}c^4 - \frac{1}{2} c^2$): 
\begin{align}
    & E_{h}(\phi,\bu)=\frac{1}{2\gamma}\|\bu\|_2^2 + E_{\rm bulk}(\phi) + E_{T}(\phi^T) + E_{B}(\phi^B), \\
    & E_{\rm bulk}(\phi) = \left( R(\phi),1 \right) + \frac{\varepsilon^2}{2}\|\nabla_h \phi\|_2^2, \quad E_{T}(\phi^T) = \left( R(\phi^T),1 \right)_\Gamma, \quad E_{B}(\phi^B) = \left( R(\phi^B),1 \right)_\Gamma . 
    \label{discrete energy}
\end{align}
\begin{theorem} \label{thm: energy dissipation}
    For the numerical solution of \eqref{scheme-CHDBC-1}-\eqref{scheme-CHDBC-6}, a modified energy stability is valid in the sense that
    \begin{align}
    (E_h^{n+1} &+ \frac{\dt^2}{2\gamma} \|\nabla_h p^{n+1}\|_2^2) - (E_h^{n} + \frac{\dt^2}{2\gamma}\|\nabla_h p^{n}\|_2^2) \nonumber \\
    & \le - \dt\| \nabla_h \mu^{n+1} \|_2^2 -\dt \| \mu^{n+1}_T \|_{2,\Gamma}^2 -\dt \| \mu^{n+1}_B \|_{2,\Gamma}^2 - \frac{\dt\nu}{\gamma} \|\nabla_h \hat{\bu}^{n+1}\|_2^2 \leq 0. \label{energy} 
\end{align}
\end{theorem}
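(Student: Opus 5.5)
The plan is to carry out the standard discrete energy argument for projection-type CHNS schemes, with boundary contributions absorbed through the dynamical boundary equations.

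\emph{Phase field part.} First, take the discrete inner product of the $\phi$-equation in \eqref{scheme-CHDBC-2} with $\dt\,\mu^{n+1}$. Apply \eqref{lemma 1-0-3} with the homogeneous Neumann condition on $\mu^{n+1}$ in \eqref{scheme-CHDBC-3}. This gives $(\phi^{n+1}-\phi^n,\mu^{n+1}) - \dt(A_h\phi^n\hat{\bu}^{n+1},\nabla_h\mu^{n+1}) = -\dt\|\nabla_h\mu^{n+1}\|_2^2$. The convective summation by parts produces no boundary term because $\hat{\bu}^{n+1}$ vanishes on $\Gamma$.

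Next, expand $(\phi^{n+1}-\phi^n,\mu^{n+1})$ using the chemical potential. For the nonlinear part, use the convex-splitting inequality $(\phi^{n+1}-\phi^n)((\phi^{n+1})^3-\phi^n)\ge R(\phi^{n+1})-R(\phi^n)$, which holds pointwise by convexity of $\frac14\phi^4$ and concavity of $-\frac12\phi^2$. For the Laplacian term, use \eqref{lemma 1-0-3}: it yields $\frac{\varepsilon^2}{2}(\|\nabla_h\phi^{n+1}\|_2^2-\|\nabla_h\phi^n\|_2^2)$ plus nonnegative squares, together with boundary terms $-\varepsilon^2(\tilde D_y\phi^{n+1}_{\cdot,N},\phi^{T,n+1}-\phi^{T,n})_\Gamma+\varepsilon^2(\tilde D_y\phi^{n+1}_{\cdot,0},\phi^{B,n+1}-\phi^{B,n})_\Gamma$.

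Replace these boundary normal derivatives via \eqref{scheme-CHDBC-4}--\eqref{scheme-CHDBC-5}. For example, on the top,
\[
-\varepsilon^2\tilde D_y\phi^{n+1}_{\cdot,N}=(\phi^{T,n+1})^3-\phi^{T,n}-\mu_T^{n+1}.
\]
Pairing this with $\phi^{T,n+1}-\phi^{T,n}=-\dt\mu_T^{n+1}$ gives $E_T(\phi^{T,n+1})-E_T(\phi^{T,n})+\dt\|\mu_T^{n+1}\|_{2,\Gamma}^2$, up to a nonnegative remainder, again by the same pointwise convex-splitting inequality. The bottom boundary is handled identically. The delicate point here is matching the boundary weights $w_0=w_N=\frac12$ in \eqref{inner product-1} with the wide-stencil normal derivative. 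I expect this to work because \eqref{lemma 1-0-3} is formulated precisely with $\tilde D_y$ and $(\cdot,\cdot)_\Gamma$.

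\emph{Momentum part.} Take the inner product of \eqref{scheme-CHDBC-1} with $\frac{\dt}{\gamma}\hat{\bu}^{n+1}$. The skew-symmetric form of $b_h$ in \eqref{bilinear form} gives $(b_h(\bu^n,\hat{\bu}^{n+1}),\hat{\bu}^{n+1})=0$ under the no-slip condition. This skew-symmetry is the step I expect to be the main obstacle: one must check that the staggered interpolations $A_{xy}$, $a_{xy}$, $\tilde D_x$, $\tilde D_y$ produce exact cancellation, which should follow from the summation by parts Lemma~\ref{lemma1} applied componentwise. The viscous term gives $\frac{\nu\dt}{\gamma}\|\nabla_h\hat{\bu}^{n+1}\|_2^2$. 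The coupling term $\dt(A_h\phi^n\nabla_h\mu^{n+1},\hat{\bu}^{n+1})$ cancels exactly with the convection term from the phase equation. This exact cancellation is why $A_h\phi^n\nabla_h\mu$ and $\nabla_h\cdot(A_h\phi^n\bu)$ were defined as discrete adjoints; I would verify the adjointness directly from their definitions.

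\emph{Projection step.} Rewrite \eqref{scheme-CHDBC-6} as $\bu^{n+1}+\dt\nabla_h p^{n+1}=\hat{\bu}^{n+1}+\dt\nabla_h p^n$. Since $\nabla_h\cdot\bu^{n+1}=0$ and $\bu^{n+1}\cdot\boldsymbol n=0$, the field $\bu^{n+1}$ is orthogonal to discrete gradients. Squaring both sides then yields
\[
\|\bu^{n+1}\|_2^2+\dt^2\|\nabla_h p^{n+1}\|_2^2=\|\hat{\bu}^{n+1}\|_2^2+2\dt(\hat{\bu}^{n+1},\nabla_h p^n)+\dt^2\|\nabla_h p^n\|_2^2 .
\]
The middle term cancels against the pressure term from the momentum inner product. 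The identity $(\hat{\bu}^{n+1}-\bu^n,\hat{\bu}^{n+1})\ge\frac12(\|\hat{\bu}^{n+1}\|_2^2-\|\bu^n\|_2^2)$ then gives the kinetic and pressure part of the modified energy. Summing the phase field, boundary, and momentum estimates produces \eqref{energy}.
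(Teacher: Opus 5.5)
Your proposal is correct and follows the paper's proof. It takes inner products of the momentum and phase equations with $\hat{\bu}^{n+1}$ and $\mu^{n+1}$, applies pointwise convex-splitting inequalities in the bulk and on $\Gamma_T,\Gamma_B$, absorbs the boundary terms from \eqref{lemma 1-0-3} through \eqref{scheme-CHDBC-4}--\eqref{scheme-CHDBC-5}, uses the skew-symmetry of $b_h$, and exploits the exact cancellation of $\dt(A_h\phi^n\nabla_h\mu^{n+1},\hat{\bu}^{n+1})$. The only difference is cosmetic: you get a single identity by squaring $\bu^{n+1}+\dt\nabla_h p^{n+1}=\hat{\bu}^{n+1}+\dt\nabla_h p^n$, using only the orthogonality of $\bu^{n+1}$ to discrete gradients, whereas the paper assembles the same identity from \eqref{energy 1.8} and \eqref{energy 1.10}.
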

\begin{proof}
Taking a discrete inner product with \eqref{scheme-CHDBC-1} by $\hat{\bu}^{n+1}$, \eqref{scheme-CHDBC-2} by $\mu^{n+1}$, gives
\begin{align}
    & \frac{1}{2\dt}( \|\hat{\bu}^{n+1}\|_2^2-\|\bu^{n}\|_2^2+\|\hat{\bu}^{n+1}-\bu^{n}\|_2^2 ) + (\nabla_h p^n, \hat{\bu}^{n+1}) + \nu \|\nabla_h \hat{\bu}^{n+1}\|_2^2 \nonumber \\
    & \quad\quad\quad\quad\quad\quad\quad\quad = -\gamma(A_h\phi^n \nabla_h \mu^{n+1}, \hat{\bu}^{n+1}), \label{energy 1.1} \\
    &  \left(\phi^{n+1} - \phi^n ,(\phi^{n+1})^3-\phi^n-\varepsilon^2\Delta_h\phi^{n+1} \right) - \dt(A_h\phi^n \nabla_h \mu^{n+1}, \hat{\bu}^{n+1}) = -\dt\| \nabla_h \mu^{n+1} \|_2^2 .  \label{energy 1.2}
\end{align}
The convexity analysis for the discrete energy terms reveals that
\begin{align}
    \left(\phi^{n+1} - \phi^n ,(\phi^{n+1})^3-\phi^n \right) \geq (R(\phi^{n+1})-R(\phi^{n}), 1). \label{energy 1.3}
\end{align}
Regarding the Laplacian term, we see that 
\begin{align}
    \left( \phi^{n+1} - \phi^n ,-\varepsilon^2\Delta_h\phi^{n+1} \right) &\geq \frac{\varepsilon^2}{2}(\|\nabla_h \phi^{n+1}\|_2^2-\|\nabla_h \phi^{n}\|_2^2) \nonumber \\
    & -\varepsilon^2(\phi^{n+1} - \phi^n, \tilde{D}_y \phi^{n+1}_{\cdot,N})_\Gamma +  \varepsilon^2(\phi^{n+1} - \phi^n, \tilde{D}_y \phi^{n+1}_{\cdot,0})_\Gamma . \label{energy 1.4}
\end{align}
The boundary parts could be further controlled, with the help of the boundary evolutionary equation. Taking a discrete inner product with \eqref{scheme-CHDBC-4} by $\mu^{n+1}_B$ shows that
\begin{align}
    (R(\phi^{B,n+1})-R(\phi^{B,n}), 1)_\Gamma \leq  -\dt \| \mu^{n+1}_B \|_{2,\Gamma}^2 + \varepsilon^2(\phi^{B,n+1} - \phi^{B,n}, \tilde{D}_y \phi^{n+1}_{\cdot,0})_\Gamma, \label{energy 1.5}
\end{align}
where the convexity of the boundary energy has been considered. The inequality on the top boundary section is also valid:
\begin{align}
    (R(\phi^{T,n+1})-R(\phi^{T,n}), 1)_\Gamma \leq  -\dt \| \mu^{n+1}_T \|_{2,\Gamma}^2 - \varepsilon^2(\phi^{T,n+1} - \phi^{T,n}, \tilde{D}_y \phi^{n+1}_{\cdot,N})_\Gamma. \label{energy 1.6}
\end{align}
Consequently, a substitution of \eqref{energy 1.3}-\eqref{energy 1.6} into \eqref{energy 1.2} leads to
\begin{align}
    (E_{\rm bulk}^{n+1} + E_{T}^{n+1} + E_{B}^{n+1}) &- (E_{\rm bulk}^{n} + E_{T}^{n} + E_{B}^{n}) - \dt(A_h\phi^n \nabla_h \mu^{n+1}, \hat{\bu}^{n+1})  \nonumber \\
    \le & -\dt\| \nabla_h \mu^{n+1} \|_2^2 -\dt \| \mu^{n+1}_T \|_{2,\Gamma}^2 -\dt \| \mu^{n+1}_B \|_{2,\Gamma}^2.\label{energy 1.7}
\end{align}
By \eqref{scheme-CHDBC-6}, the following equality is observed: 
\begin{align}
     (\nabla_h p^n, \hat{\bu}^{n+1}) &=-(p^n, \nabla_h \cdot \hat{\bu}^{n+1})= -(p^n, \dt \Delta_h(p^{n+1}-p^n)) 
     = \dt ( \nabla_h p^n, \nabla_h(p^{n+1}-p^n))  \nonumber \\ 
    & = \frac{\dt}{2}(\|\nabla_h p^{n+1}\|_2^2-\|\nabla_h p^{n}\|_2^2-\|\nabla_h p^{n+1}-\nabla_h p^{n}\|_2^2)   \nonumber \\
    & = \frac{\dt}{2}(\|\nabla_h p^{n+1}\|_2^2-\|\nabla_h p^{n}\|_2^2) - \frac{1}{2\dt} \|\bu^{n+1}-\hat{\bu}^{n+1}\|_2^2 .\label{energy 1.8} 
\end{align}
A combination of \eqref{energy 1.1}, \eqref{energy 1.7} and \eqref{energy 1.8} yields 
\begin{align}
    & (E_{\rm bulk}^{n+1} + E_{T}^{n+1} + E_{B}^{n+1}) - (E_{\rm bulk}^{n} + E_{T}^{n} + E_{B}^{n}) + \frac{1}{2\gamma}(\|\hat{\bu}^{n+1}\|_2^2-\|\bu^{n}\|_2^2-\|\bu^{n+1}-\hat{\bu}^{n+1}\|_2^2) \nonumber \\
    & \quad + \frac{\dt^2}{2\gamma} (\|\nabla_h p^{n+1}\|_2^2-\|\nabla_h p^{n}\|_2^2)    \nonumber \\
    & \leq -\dt\| \nabla_h \mu^{n+1} \|_2^2 -\dt \| \mu^{n+1}_T \|_{2,\Gamma}^2 -\dt \| \mu^{n+1}_B \|_{2,\Gamma}^2 - \frac{\dt\nu}{\gamma} \|\nabla_h \hat{\bu}^{n+1}\|_2^2 . \label{energy 1.9}
\end{align}
On the other hand, taking a discrete inner product with \eqref{scheme-CHDBC-6} by $\bu^{n+1}$ results in
\begin{equation}
    \|\bu^{n+1}\|_2^2 - \|\hat{\bu}^{n+1}\|_2^2 + \|\bu^{n+1}-\hat{\bu}^{n+1}\|_2^2 = 0,\label{energy 1.10}
\end{equation}
 which comes from the fact that $(\nabla_h(p^{n+1}-p^n),\bu^{n+1})=0$. A combination of the last
identity with \eqref{energy 1.9} leads to
\begin{align}
    E_h^{n+1}-E_h^{n} &+ \frac{\dt^2}{2\gamma} (\|\nabla_h p^{n+1}\|_2^2-\|\nabla_h p^{n}\|_2^2)  \nonumber \\
    \le & -\dt\| \nabla_h \mu^{n+1} \|_2^2 -\dt \| \mu^{n+1}_T \|_{2,\Gamma}^2 -\dt \|\mu^{n+1}_B \|_{2,\Gamma}^2 - \frac{\dt\nu}{\gamma} \|\nabla_h \hat{\bu}^{n+1}\|_2^2 \leq 0. \label{energy 1.11} 
\end{align}
This is exactly inequality \eqref{energy}, so that the total energy dissipation has been established. This finishes the proof of Theorem \ref{thm: energy dissipation}.
\end{proof}


\section{Convergence analysis}  \label{sec:convergence} 
Now we proceed into the convergence analysis. Let $(\phi_e, \bu_e,  p_e)$ be the exact solution to the CHNS system \eqref{CHNS-DBC-1}–\eqref{CHNS-DBC-6}, with dynamical boundary condition. With sufficiently regular initial data, it is assumed that the exact solution has regularity of class $\mathcal{R}$: 
\begin{equation}
\phi_e, \, \bu_e, \, p_e \in \mathcal{R} := C^2 (0,T; C_{\rm per}(\Omega) ) \cap C^1 (0,T; C^2_{\rm per}(\Omega) ) \cap L^\infty (0,T; C^8_{\rm per}(\Omega) ).
	\label{assumption:regularity.1}
	\end{equation}
For the phase variable $\phi_e$, there is a two-step adjustment to ensure the mass conservation in discrete level. The first step is a Fourier projection only in $x$-direction, i.e. $\Phi_{N,x}(\cdot,t) = \mathcal{P}_{N,x} \phi_e(\cdot,t)$.
The projection approximation is standard:
\begin{equation}
    \| \Phi_{N,x} - \phi_e\|_{L^\infty(0,T;H^k)} \leq C h^{\ell-k} \| \phi_e \|_{L^\infty(0,T;H^\ell)}, \quad  \forall 0 \le k \le \ell.\label{Fourier-approximation-3}
\end{equation}

In terms of the velocity variable, it is noticed that the exact velocity profile $\bu_e$ does not automatically satisfy the divergence-free condition at the discrete level. To enforce this property, a spatial interpolation operator $\mP_H$ is introduced. For any divergence-free field $\bu_e \in H^1 (\Omega)$ with $\nabla \cdot \bu_e = 0$, there exists an exact stream function $\psi_e$ such that $\bu_e = \nabla^{\perp} \psi_e$. Furthermore, to ensure a higher order consistency for the no-slip boundary condition, namely, $u=v=0$ at $y=0,1$, an asymptotic expansion of the stream function profile is necessary. In more details, the exact stream function satisfies both the Dirichlet and Neumann boundary conditions, namely $\psi_e = \partial_y \psi_e =0$ at $\Gamma_y: y=0$, and a careful Taylor expansion around $y=0$ indicates that 
\begin{equation} 
  ( \psi_e )_{i+\hf,-1} = ( \psi_e )_{i+\hf, 1} - 2 h^3 ( \partial_y^3 \psi_e )_{i+\hf, 0} /6 + O (h^5) .  
  \label{stream function-expansion-1} 
\end{equation} 
Based on this expansion, we construct an auxiliary stream function profile to satisfy 
\begin{equation} 
  \hat{\psi} \mid_{y=0} = \hat{\psi} \mid_{y=1} = 0 , \quad 
  \partial_y \hat{\psi} (x, 0, t) = -  \partial_y^3 \psi_e (x, 0 ,t) /6 , \, \, \, 
  \partial_y \hat{\psi} (x, 1, t) = -  \partial_y^3 \psi_e (x, 1 ,t) /6 . 
  \label{stream function-expansion-2} 
\end{equation} 
In fact, such a construction is straightforward; for example, an auxiliary profile could be taken as $\hat{\psi} (x,y,t) = - \frac16 \partial_y^3 \psi_e (x, 0 ,t) y (1-y)^2 + \frac16 \partial_y^3 \psi_e (x, 1 ,t) y^2 (1-y)$. Subsequently, a Taylor expansion could be applied to this auxiliary profile:  
\begin{equation} 
  \hat{\psi}_{i+\hf,-1} = \hat{\psi}_{i+\hf, 1} - 2 h \partial_y \hat{\psi}_{i+\hf, 0} + O (h^3) 
  = \hat{\psi}_{i+\hf, 1} + h ( \partial_y^3 \psi_e )_{i+\hf, 0}  / 3+ O (h^3)   . 
  \label{stream function-expansion-3} 
\end{equation} 
Therefore, by denoting $\Psi = \psi_e + h^2 \hat{\psi}$, we see that a combination of \eqref{stream function-expansion-1} and \eqref{stream function-expansion-3} leads to an $O (h^5)$ boundary extrapolation: 
\begin{equation} 
  \Psi_{i+\hf,-1} = \Psi_{i+\hf, 1}  + O (h^5)  , \quad  \Psi_{i+\hf, N+1} = \Psi_{i+\hf, N-1}  + O (h^5) . 
  \label{stream function-expansion-4} 
\end{equation} 
Similar asymptotic analysis for the boundary extrapolation could also be found in~\cite{WLJ04}, etc. 

In turn, the spatial interpolation operator $\mP_H$ is then defined as
\begin{equation}
    \bU = \mP_H( \nabla^\perp \Psi)=\nabla_h^{\perp} \Psi = (-D_y \Psi, D_x \Psi )^T,  
     \label{discrete Helmholtz interpolation}
\end{equation}
evaluated at the staggered mesh points, $(i+\hf, j+\hf)$, $(i,j)$, respectively. This operator ensures $\nabla_h \cdot \bU = 0$ at a point-wise level. Of course, the continuous velocity field $\bu$ and the constructed $\bU$ 
differ by an $O(h^2)$ truncation error. 
Moreover, by the $O (h^5)$ boundary extrapolation estimate~\eqref{stream function-expansion-4} for the constructed stream function profile, an $O (h^4)$ boundary estimate becomes available for the constructed horizontal velocity component: 
\begin{equation} 
\begin{aligned} 
  & 
  U_{i+\hf, \hf} = - D_y \Psi_{i+\hf,\hf} =  - \Psi_{i+\hf,1} / h  , \quad 
  U_{i+\hf, - \hf} = - D_y \Psi_{i+\hf, -\hf} =   \Psi_{i+\hf,-1} / h , 
\\
  & \mbox{so that}  \quad 
  U_{i+\hf, \hf}  + U_{i+\hf, - \hf} = O (h^4) . 
\end{aligned} 
  \label{velocity-expansion-1} 
\end{equation} 

A similar process could be applied to construct an approximate chemical potential profile to satisfies a higher order boundary extrapolation estimate. The exact chemical potential is taken as $\mu_e = \phi_e^3 - \phi_e - \varepsilon^2 \Delta \phi_e$, which satisfies a homogeneous Neumann boundary conditions at $\Gamma_B: y=0$. An associated Taylor expansion around $y=0$ gives 
\begin{equation} 
  ( \mu_e )_{i+\hf,-1} = ( \mu_e )_{i+\hf, 1} - 2 h^3 ( \partial_y^3 \mu_e )_{i+\hf, 0} /6 + O (h^5) ,  
  \label{mu-expansion-1} 
\end{equation} 
and an auxiliary chemical potential profile is constructed to satisfy 
\begin{equation} 
  \partial_y \hat{\mu} (x, 0, t) = - \partial_y^3 \mu_e (x, 0 ,t) /6 , \, \, \, 
  \partial_y \hat{\mu} (x, 1, t) = - \partial_y^3 \mu_e (x, 1 ,t) /6 . 
  \label{mu-expansion-2} 
\end{equation} 
Of course, a Taylor expansion to this auxiliary profile (around $y=0$) implies that   
\begin{equation} 
  \hat{\mu}_{i+\hf,-1} = \hat{\mu}_{i+\hf, 1} - 2 h \partial_y \hat{\mu}_{i+\hf, 0} + O (h^3) 
  = \hat{\mu}_{i+\hf, 1} + h ( \partial_y^3 \mu_e )_{i+\hf, 0} /3 + O (h^3)   . 
  \label{mu-expansion-3} 
\end{equation} 
Subsequently, we denote $\mV = \mu_e + h^2 \hat{\mu}$, so that a combination of \eqref{mu-expansion-1} and \eqref{mu-expansion-3} leads to an $O (h^5)$ boundary extrapolation: 
\begin{equation} 
  \mV_{i+\hf,-1} = \mV_{i+\hf, 1}  + O (h^5)  , \quad  \mV_{i+\hf, N+1} = \mV_{i+\hf, N-1}  + O (h^5) . 
  \label{mu-expansion-4} 
\end{equation} 

In terms of the phase variable, a similar process is needed to preserve a higher order accuracy for the boundary extrapolation. We denote ${\bf \varphi}^B = - \varepsilon^{-2} ( \partial_t \Phi_{N,x} + G ( \Phi_{N,x} )) \mid_{y=0}$,  ${\bf \varphi}^T = - \varepsilon^{-2} ( \partial_t \Phi_{N,x} + G ( \Phi_{N,x} )) \mid_{y=1}$, the exact normal derivative profile of the phase variable, determined by~\eqref{CHNS-DBC-5}. The Taylor expansion around $y=0$ implies that 
\begin{equation}  
  ( \Phi_{N,x} )_{i+\hf,-1} = ( \Phi_{N,x} )_{i+\hf, 1} + 2h {\bf \varphi}^B_{i+\hf, 0} - 2 h^3 ( \partial_y^3 \Phi_{N,x} )_{i+\hf, 0} /6 + O (h^5) ,  
  \label{phi-expansion-1} 
\end{equation} 
and an auxiliary phase variable profile satisfies 
\begin{align}  
  & 
  \overline{\hat{\Phi}} = 0 , \quad 
  \partial_y \hat{\Phi} (x, 0, t) = -  \partial_y^3 \Phi_{N,x} (x, 0 ,t) /6 , \, \, \, 
  \partial_y \hat{\Phi} (x, 1, t) = -  \partial_y^3 \Phi_{N,x} (x, 1 ,t) /6 ,  
  \label{phi-expansion-2} 
\\ 
  & 
   \hat{\Phi}_{i+\hf,-1} = \hat{\Phi}_{i+\hf, 1} - 2 h \partial_y \hat{\Phi}_{i+\hf, 0} + O (h^3) 
  = \hat{\Phi}_{i+\hf, 1} + h ( \partial_y^3 \Phi_{N,x} )_{i+\hf, 0}  / 3+ O (h^3)   . 
  \label{phi-expansion-3} 
\end{align} 
In turn, by defining $\breve{\Phi}_N = \Phi_{N,x} + h^2 \hat{\Phi}$, we obtain
\begin{equation} 
\begin{aligned} 
  & 
  ( \breve{\Phi}_N )_{i+\hf,-1} = ( \breve{\Phi} _N)_{i+\hf, 1}  + 2h {\bf \varphi}^B_{i+\hf, 0} 
  + O (h^5)  ,  
\\
  & 
  ( \breve{\Phi}_N )_{i+\hf, N+1} = ( \breve{\Phi}_N )_{i+\hf, N-1}  + 2h {\bf \varphi}^T_{i+\hf, 0} + O (h^5) , 
\end{aligned} 
  \label{phi-expansion-4} 
\end{equation} 

Meanwhile, we have to introduce an auxiliary cosine function to enforce the bulk mass conservation at a discrete level, and a corrected profile is created: 
\begin{equation}
\delta \Phi_N(x,y,t) = \big(\overline{\breve{\Phi}_N} 
- \overline{\breve{\Phi}_N^0} \big) ( 1 - \cos(2 \pi y) ), \quad \Phi_N:= \breve{\Phi}_N - \delta \Phi_N,
  \quad x, y \in [0,1]. \label{cos auxiliary function 1}
\end{equation}
A few helpful properties of the auxiliary function are stated below; see \cite{Guo_2025_convergence_CHDBC} for a detailed proof.
\begin{lem} \cite{Guo_2025_convergence_CHDBC} \label{cos function property}
    For the auxiliary function $\delta \Phi_N$ in \eqref{cos auxiliary function 1}, the following properties are valid:
        \begin{align}
          & 
            \|\delta \Phi_N^n\|_2 \leq C h^2,\quad n \ge 0 ,  \quad 
            \mbox{$C$ is a constant independent on $\dt$ and $h$}, 
             \label{auxiliary function order}
\\
           &      
            \overline{\Phi_N}=\overline{\breve{\Phi}_N-\delta \Phi_N}=\overline{\breve{\Phi}_N^0}=\overline{\phi^0}.  \label{bulk mass conservation}
  \\
      & 
      ( \Phi_N )_{i+\hf,-1} = ( \Phi _N)_{i+\hf, 1}  + 2h {\bf \varphi}^B_{i+\hf, 0} 
  + O (h^5) ,  \label{phi-expansion-5} 
\\
  & 
   ( \Phi_N )_{i+\hf, N+1} = ( \Phi_N )_{i+\hf, N-1}  + 2h {\bf \varphi}^T_{i+\hf, 0} + O (h^5) . \nonumber 
        \end{align}
\end{lem}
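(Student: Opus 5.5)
The plan is to treat the three properties separately, since each follows from a different feature of the correction $\delta\Phi_N = \big(\overline{\breve{\Phi}_N}-\overline{\breve{\Phi}_N^0}\big)(1-\cos(2\pi y))$. The common first step is to bound the scalar prefactor $\beta(t):=\overline{\breve{\Phi}_N}(t)-\overline{\breve{\Phi}_N^0}$, where the bar denotes the discrete (trapezoidal-in-$y$) average defined through \eqref{inner product-1}.

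\textbf{Bound on $\beta(t)$.} I write $\beta(t) = \big(\overline{\breve{\Phi}_N}(t)-\int_\Omega\breve{\Phi}_N(t)\big) - \big(\overline{\breve{\Phi}_N^0}-\int_\Omega\breve{\Phi}_N^0\big) + \big(\int_\Omega\breve{\Phi}_N(t)-\int_\Omega\breve{\Phi}_N^0\big)$.

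\emph{Quadrature errors.} Since $\Phi_{N,x}$ is a trigonometric polynomial of degree $\le K$ in $x$, the rectangle rule in $x$ is exact. The error is therefore the composite trapezoidal error in $y$, which is $O(h^2)$ given the regularity class $\mathcal{R}$. The auxiliary term $h^2\hat\Phi$ contributes only $O(h^2)$.

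\emph{Continuous mass change.} For the exact solution, $\frac{d}{dt}\int_\Omega\phi_e = \int_\Omega(\Delta\mu_e-\nabla\cdot(\phi_e\bu_e))=0$, by $\partial_n\mu_e=0$ and $\bu_e=0$ on $\Gamma$. The $x$-projection preserves the $x$-mean, so $\int_\Omega \Phi_{N,x}$ is constant in time. With $\overline{\hat\Phi}=0$ (taking the auxiliary profile with zero continuous mean and using the $O(h^2)$ quadrature), the continuous masses of $\breve\Phi_N$ differ by at most $O(h^2)$, in fact $O(h^4)$.

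Hence $|\beta(t)|\le Ch^2$ uniformly in $t\in[0,T]$. Because $\|1-\cos(2\pi y)\|_2\le 2$, this gives \eqref{auxiliary function order}.

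\textbf{Mass identity \eqref{bulk mass conservation}.} I need the discrete average of $1-\cos(2\pi y)$ to equal one, i.e. the trapezoidal sum $h\sum_j w_j\cos(2\pi jh)=0$. This follows from the exactness of the trapezoidal rule for periodic trigonometric functions of frequency below $N$; indeed $h\sum_{j=0}^{N-1}\cos(2\pi jh)=0$, and the two weights $\tfrac12$ at $j=0,N$ combine into one full term. Then $\overline{\Phi_N}=\overline{\breve\Phi_N}-\beta=\overline{\breve\Phi_N^0}$. Finally, $\overline{\breve\Phi_N^0}=\overline{\phi^0}$ holds by taking the initial datum $\phi^0$ as the grid restriction of $\breve\Phi_N^0$.

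\textbf{Boundary extrapolations \eqref{phi-expansion-5}.} The function $g(y)=1-\cos(2\pi y)$ is even about $y=0$ and about $y=1$, so $g(-h)=g(h)$ and $g(1+h)=g(1-h)$ exactly. Thus $\delta\Phi_N$ takes equal values at ghost points and their mirror interior points. Subtracting it from \eqref{phi-expansion-4} therefore leaves those relations unchanged, with the same normal-derivative term $2h\varphi^{B,T}$; this is consistent with $g'(0)=g'(1)=0$.

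\textbf{Main obstacle.} The delicate step is the time-uniform $O(h^2)$ bound on $\beta$. It requires that the continuous mass of $\breve\Phi_N$ be conserved up to $O(h^2)$ despite the dynamic boundary condition, that the mean of the $h^2\hat\Phi$ correction be controlled, and that the $y$-quadrature error constant be uniform in time. The last point comes from the $L^\infty(0,T;C^8)$ regularity.
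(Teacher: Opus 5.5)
Your proposal is correct and follows essentially the route the paper relies on. The paper defers the details to the cited reference, remarking only that the correction is an $O(h^2)$ truncation-level term and that $1-\cos(2\pi y)$ satisfies both Dirichlet and Neumann conditions at $y=0,1$; your argument uses the same ingredients: exact bulk mass conservation of $\phi_e$ plus $O(h^2)$ trapezoidal quadrature in $y$ for the drift of the discrete mean, the vanishing discrete mean of $\cos(2\pi y)$, and the evenness of the cosine about the walls for the ghost-point relations. As a minor simplification, \eqref{phi-expansion-2} already imposes $\overline{\hat{\Phi}}=0$ in the discrete sense, so the $h^2\hat{\Phi}$ part drops out of your $\beta(t)$ exactly and you do not need to reinterpret it as a continuous-mean condition.
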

Estimate \eqref{auxiliary function order} indicates that the auxiliary function does not alter the overall convergence rate, as it corresponds to a truncation error of the same order. Furthermore, the cosine function inherently satisfies both Dirichlet and Neumann boundary conditions, thereby preserving mass conservation at the boundary and leaving the normal derivative unaffected. 

The corresponding error grid functions 
are introduced as
\begin{equation}
    \tilde{\phi}^n:= \Phi_N^n -\phi^n, \quad 
    \tilde{\bu}^n:=\bU^n-\bu^n, \quad \tilde{p}^n:=p_e^n-p^n, \quad \forall n \ge 0 .\label{error function-1}
\end{equation}
Furthermore, it follows that $\overline{\tilde{\phi}^n}=0$, for any $n \ge 0$, and the discrete norms $\| \cdot \|_{-1}$ and $\| \cdot \|_{-1,\Gamma}$ are well defined for the error grid function $\tilde{\phi}^n$. 

The following theorem is the main result of this section.
\begin{theorem} \label{main result theorem}
    Given initial data $\Phi(\cdot,t=0)$, $\bU(\cdot,t=0)$ with enough regularity, assume the exact solution for CHNS system \eqref{CHNS-DBC-1}-\eqref{CHNS-DBC-6} is of regularity class $\mathcal{R}$. Then, provided that $\dt$ and $h$ are sufficiently small, and under the linear refinement requirement $C_1 h\leq \dt \leq C_2 h$, we have
    \begin{equation}
    \|\nabla_h \tilde{\phi}^{n+1}\|_2 + \| \tilde{\phi}^{T,n+1}\|_{2,\Gamma} + \| \tilde{\phi}^{B,n+1}\|_{2,\Gamma} + \|\tilde{\bu}^{n+1}\|_2 \leq C(\dt + h^2), \label{main result}
\end{equation}
where $C$ is positive constant independent on $\dt$ and $h$.
\end{theorem}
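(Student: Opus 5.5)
The plan follows the consistency--stability route, carried out at the level of the energy norms indicated in the statement, combined with a rough/refined two-step argument and an induction on $n$.

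\textbf{Truncation errors.} First I would substitute the constructed profiles $(\Phi_N, \bU, \mV, p_e)$ into the scheme \eqref{scheme-CHDBC-1}--\eqref{scheme-CHDBC-6}. The corrected stream function $\Psi$ with extrapolation \eqref{velocity-expansion-1}, the corrected chemical potential $\mV$ with \eqref{mu-expansion-4}, and Lemma~\ref{cos function property} for $\Phi_N$ then give local truncation errors $\tau_u,\tau_\phi,\tau_B,\tau_T$ of order $O(\dt+h^2)$. These hold both in the bulk and in the boundary equations \eqref{scheme-CHDBC-4}--\eqref{scheme-CHDBC-5}. Because $\overline{\Phi_N^n}=\overline{\phi^0}$ and Theorem~\ref{thm:solvability} gives mass conservation of the scheme, we have $\overline{\tilde\phi^n}=0$ and $\overline{\tau_\phi}=0$. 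This makes $\|\cdot\|_{-1}$ and the Poincar\'e inequality of Lemma~\ref{lem:Poincare} available. Subtracting the scheme yields error equations for $(\tilde{\bu},\hat{\tilde{\bu}},\tilde p,\tilde\phi,\tilde\mu,\tilde\phi^B,\tilde\phi^T)$.

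\textbf{Rough estimate.} As an a priori assumption (induction hypothesis) I would take $\|\nabla_h\tilde\phi^k\|_2+\|\tilde{\bu}^k\|_2+\|\tilde\phi^{B,k}\|_{2,\Gamma}+\|\tilde\phi^{T,k}\|_{2,\Gamma}\le \dt^{7/8}+h^{7/4}$ for $k\le n$. Lemma~\ref{inverse inequality lemma} then gives $\|\tilde\phi^n\|_\infty\le C_\delta h^{-\delta}(\dt^{7/8}+h^{7/4})\le 1$, a boundary analogue gives the same on $\Gamma$, and the inverse inequality gives $\|\tilde{\bu}^n\|_\infty\le Ch^{-1}\|\tilde{\bu}^n\|_2$. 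Under $\dt\sim h$ this last bound is $O(h^{-1/8})$; I expect that only $\|\tilde{\bu}^n\|_3\le Ch^{-1/3}\|\tilde{\bu}^n\|_2$ is really needed, and that quantity is small. Hence $\phi^n,\bu^n$ are uniformly bounded.

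To control $\tilde\phi^{n+1}$, I would test the phase error equation with $L_h^{-1}$-type or direct functions, using the monotonicity of $(\mG_h^n)^{-1}$ from Proposition~\ref{prop:inverse} together with the convexity of the cubic terms. This gives a crude bound such as $\|\nabla_h\tilde\phi^{n+1}\|_2\le C(\dt+h^2)\dt^{-1/2}+\ldots$, which by the inverse inequality yields $\|\phi^{n+1}\|_\infty\le C^*$ and $\|\phi^{B/T,n+1}\|_{\infty,\Gamma}\le C^*$. The linear refinement $C_1h\le\dt\le C_2h$ is used exactly here, to convert $h^{-1}$ factors into powers of $\dt$.

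\textbf{Refined estimate.} This mimics Theorem~\ref{thm: energy dissipation} at the error level. I take inner products of the momentum error equation with $\hat{\tilde{\bu}}^{n+1}$, of the phase error equation with $\tilde\mu^{n+1}$, and of the boundary error equations with $\tilde\mu_B^{n+1}$ and $\tilde\mu_T^{n+1}$. By the summation-by-parts formulas of Lemma~\ref{lemma1}, the normal-derivative boundary terms cancel against the boundary equations, as in \eqref{energy 1.4}--\eqref{energy 1.6}. The coupling terms $-\gamma(A_h\phi^n\nabla_h\tilde\mu,\hat{\tilde{\bu}})$ and $(\nabla_h\cdot(A_h\phi^n\hat{\tilde{\bu}}),\tilde\mu)$ cancel exactly, thanks to the staggered grid. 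The pressure is handled as in \eqref{energy 1.8}.

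What remains are the nonlinear difference terms. The term $(A_h\tilde\phi^n\nabla_h\mV,\hat{\tilde{\bu}})$ is bounded by $C\|\tilde\phi^n\|_2\|\hat{\tilde{\bu}}\|_2$ using the regularity of $\mV$. The term $(\nabla_h\cdot(A_h\tilde\phi^n\bU),\tilde\mu)$ is bounded by $C\|\nabla_h\tilde\phi^n\|_2\|\nabla_h\tilde\mu\|_2$ after summation by parts. The convection difference $b_h(\bu^n,\hat{\bu})-b_h(\bU,\bU)$ uses skew-symmetry of $b_h$ together with $\|\nabla_h\bU\|_\infty$. The cubic difference $(\Phi_N^3-\phi^3,\tilde\phi^{n+1}-\tilde\phi^n)$ is handled using the maximum-norm bounds from the rough step.

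\textbf{Main obstacle.} The hardest point is that $\tilde\mu$ is tested against a time difference of $\tilde\phi$, so the cubic difference and the boundary cubic terms produce $(\tilde\phi^{n+1}-\tilde\phi^n)$ factors. These have to be absorbed through $\dt\|\nabla_h\tilde\mu\|_2^2$ and $\dt\|\tilde\mu_{B/T}\|_{2,\Gamma}^2$ via the $H^{-1}$ relation $\tilde\phi^{n+1}-\tilde\phi^n=\dt(\Delta_h\tilde\mu+\ldots)$, with the boundary parts absorbed directly through $\tilde\phi^{B,n+1}-\tilde\phi^{B,n}=-\dt\tilde\mu_B+\ldots$. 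I would attempt this first by writing $(\nabla_h(\Phi_N^3-\phi^3),\nabla_h\tilde\mu)$-type bounds via the $H^1$ norm of the cubic difference, which the $\ell^\infty$ bounds make $\le C\|\nabla_h\tilde\phi\|_2$.

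After summing in time and applying the discrete Gronwall inequality, we obtain \eqref{main result}. This also recovers the a priori assumption at step $n+1$ for small $\dt,h$, which closes the induction.
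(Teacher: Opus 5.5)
Your plan is correct and follows the paper's own architecture: boundary-corrected consistency profiles, an a priori assumption, and a rough estimate (joint test with $\hat{\tilde{\bu}}^{n+1}$ and $\tilde\mu^{n+1}$, accepting a $\dt^{-1}$ loss) that gives $\ell^\infty$ bounds on $\phi^{n+1}$, followed by a refined estimate built on $\|\nabla_h((\Phi_N^{n+1})^3-(\phi^{n+1})^3)\|_2\le C\|\nabla_h\tilde\phi^{n+1}\|_2$ and the boundary dissipation $\dt^{-2}\|\tilde\phi^{\alpha,n+1}-\tilde\phi^{\alpha,n}\|_{2,\Gamma}^2$, closed by Gronwall and induction. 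Your refined step tests with $\tilde\mu^{n+1}$ and then re-substitutes the phase error equation against $(\Phi_N^{n+1})^3-(\phi^{n+1})^3-\tilde\phi^n+\tau_\mu^{n+1}$; up to a factor $\varepsilon^2$, this is algebraically the same as the paper's direct test with $-\Delta_h\tilde\phi^{n+1}$, except that you keep $\|\nabla_h\tilde\mu^{n+1}\|_2^2$ rather than $\|\nabla_h\Delta_h\tilde\phi^{n+1}\|_2^2$ as the dissipation. Two small points: your own estimate does not make $\bu^n$ uniformly bounded ($\|\tilde{\bu}^n\|_\infty$ is only $O(h^{-1/8})$), but this is not needed because $(b_h(\bu^n,\hat{\tilde{\bu}}^{n+1}),\hat{\tilde{\bu}}^{n+1})=0$; and the boundary $\ell^2$ norms must enter the Gronwall quantity, which the paper does via $\|\tilde\phi^{\alpha,\ell}\|_{2,\Gamma}^2\le T G^{\alpha,\ell}$ and which a discrete trace inequality plus Poincar\'e would also achieve.
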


\subsection{The consistency analysis}
The projection method applied to the no-slip velocity boundary condition presents certain theoretical challenges in the boundary analysis. In the consistency analysis, no intermediate velocity vector is needed, in comparison with the numerical solution. A careful truncation error estimate reveals that 
\begin{align}
& \frac{\bU^{n+1}-\bU^n}{\dt} +b_h(\bU^n,\bU^{n+1}) + \nabla_h P^n=\nu \Delta_h \bU^{n+1}-\gamma A_h\Phi_N^n \nabla_h \mV^{n+1} + \tau^{n+1}_{\bu} , \label{semi-CHDBC-1} \\
& \frac{\bU^{n+1}-\bU^{n+1}}{\dt} + \nabla_h(P^{n+1}-P^n) = \tau_p^{n+1}:= \nabla_h(P^{n+1}-P^n), \label{semi-CHDBC-9} \\
& \nabla_h \cdot \bU^{n+1}=0, \quad V^{n+1} |_{y=0,1}=0, \quad 
 ( a_y U^{n+1} ) |_{y=0,1}=0 , \label{semi-CHDBC-2.1} \\
& \frac{\Phi_N^{n+1}-\Phi_N^{n}}{\dt} + \nabla_h \cdot\left( A_h\Phi_N^n\bU^{n+1} \right)=\Delta_h \mV^{n+1} + \tau^{n+1}_{\phi},   \label{semi-CHDBC-2} \\
&\mV^{n+1}=(\Phi_N^{n+1})^3-\Phi_N^n-\varepsilon^2 \Delta_h \Phi_N^{n+1} + \tau_\mu^{n+1} ,
  \quad \partial_n \mV^{n+1} |_{y=0,1} 
 =0, \label{semi-CHDBC-3}\\
& \Phi_N^{n+1}|_{j=0} = \Phi_N^{B,n+1} , \  \  \Phi_N^{n+1}|_{j=N} = \Phi_N^{T,n+1}, \label{semi-CHDBC-4} \\
& \frac{\Phi_N^{B,n+1} - \Phi_N^{B,n}}{\dt} = - \mV_{B}^{n+1} + \tau^{n+1}_{B}, \quad \frac{\Phi_N^{T,n+1} - \Phi_N^{T,n}}{\dt} = - \mV_{T}^{n+1} + \tau^{n+1}_{T},\label{semi-CHDBC-5}  \\
& \mV_{B}^{n+1} = (\Phi_N^{B,n+1})^3 - \Phi_N^{B,n} 
-\varepsilon^2 (\tilde{D}_y \Phi^{n+1})_{\cdot,0}, \label{semi-CHDBC-6} \\
& \mV_{T}^{n+1} = (\Phi_N^{T,n+1})^3 - \Phi_N^{T,n} 
+ \varepsilon^2 (\tilde{D}_y \Phi^{n+1})_{\cdot,N} , \label{semi-CHDBC-8}
\\ 
  & \mbox{with} \, \, \, 
    \|\tau^{n+1}_{\bu}\|_2,\ \| \tau^{n+1}_{\phi} \|_2,\ \| \tau^{n+1}_{p} \|_2,\  \| \tau_\mu^{n+1} \|_{H_h^1} , \ \|\tau^{n+1}_T\|_{2,\Gamma},\ \|\tau^{n+1}_B\|_{2,\Gamma} \leq C(\dt+h^2).
\end{align}
In particular, we notice that, the constructed approximation chemical potential profile, given by $\mV^{n+1} = \mu_e^{n+1} + h^2 \hat{\mu}^{n+1}$, differs with $(\Phi_N^{n+1})^3-\Phi_N^n-\varepsilon^2 \Delta_h \Phi_N^{n+1}$ by an $O(\dt + h^2)$ truncation error, as stated in~\eqref{semi-CHDBC-3}. In comparison, such a construction is not needed to get the boundary chemical potential profile, due to the periodic boundary condition over the boundary sections.  Moreover, because of the higher order ``ghost" point extrapolation estimates~\eqref{velocity-expansion-1}, \eqref{mu-expansion-4} and \eqref{phi-expansion-4}, these boundary extrapolation formulas would not lead to a reduction of accuracy order for the truncation error; also see the related works~\cite{WL02, WLJ04} to deal with similar boundary extrapolation formulas.

\subsection{A rough error estimate}

Subtracting the numerical system \eqref{scheme-CHDBC-1}-\eqref{scheme-CHDBC-6} from the consistency estimate \eqref{semi-CHDBC-1}-\eqref{semi-CHDBC-8} gives
\begin{align}
&\frac{\hat{\tilde{\bu}}^{n+1}-\tilde{\bu}^n}{\dt} + b_h(\tilde{\bu}^n,\bU^{n+1}) + b_h(\bu^n,\hat{\tilde{\bu}}^{n+1}) + \nabla_h \tilde{p}^n - \nu \Delta_h \hat{\tilde{\bu}}^{n+1}   \nonumber  \\  
& \quad\quad\quad\quad\quad\quad\quad\quad\quad\quad\quad\quad\quad\quad\quad 
= - \gamma A_h\tilde{\phi}^n \nabla_h \mV^{n+1} - \gamma A_h\phi^n \nabla_h \tilde{\mu}^{n+1} +\tau^{n+1}_{\bu}, \label{error 1}   \\
& \frac{\tilde{\phi}^{n+1}-\tilde{\phi}^{n}}{\dt} + \nabla_h\cdot\left( A_h\tilde{\phi}^n\bU^{n+1} + A_h\phi^n\hat{\tilde{\bu}}^{n+1} \right)=\Delta_h \tilde{\mu}^{n+1} + \tau^{n+1}_{\phi}, \label{error 2}   \\
&\tilde{\mu}^{n+1}=(\Phi_N^{n+1})^3-(\phi^{n+1})^3-\tilde{\phi}^n-\varepsilon^2\Delta_h \tilde{\phi}^{n+1} 
+ \tau_\mu^{n+1} , \quad \partial_n \tilde{\mu}^{n+1} |_{y=0,1} =0 , \label{error 3} \\
& \tilde{\phi}^{n+1}_{\cdot,0} = \tilde{\phi}^{B,n+1} , \  \  \tilde{\phi}^{n+1}_{\cdot,N} = \tilde{\phi}^{T,n+1},  \label{error 4}\\
& \frac{\tilde{\phi}^{B,n+1} - \tilde{\phi}^{B,n}}{\dt} = - \tilde{\mu}_{B}^{n+1} + \tau^{n+1}_B,  \quad \frac{\tilde{\phi}^{T,n+1} - \tilde{\phi}^{T,n}}{\dt} = - \tilde{\mu}_{T}^{n+1} + \tau^{n+1}_T, \label{error 5} \\
& \tilde{\mu}_{B}^{n+1} = (\Phi_N^{B,n+1})^3-(\phi^{B,n+1})^3 - \tilde{\phi}^{B,n} 
- \varepsilon^2 (\tilde{D}_y \tilde{\phi}^{n+1})_{\cdot,0}, \label{error 6} \\
& \tilde{\mu}_{T}^{n+1} = (\Phi_N^{T,n+1})^3-(\phi^{T,n+1})^3 - \tilde{\phi}^{T,n} 
+ \varepsilon^2 (\tilde{D}_y \tilde{\phi}^{n+1})_{\cdot,N}, 	\label{error 7} \\
& \frac{\tilde{\bu}^{n+1}-\hat{\tilde{\bu}}^{n+1}}{\dt} + \nabla_h(\tilde{p}^{n+1}-\tilde{p}^n)=\tau^{n+1}_{p}, \quad \nabla_h \cdot \tilde{\bu}^{n+1} = 0  . \label{error 8}
\end{align}
The boundary condition holds that $\hat{\tilde{\bu}}^{n+1}  =\boldsymbol{0}$, $\tilde{\bu}^{n+1} \cdot \boldsymbol{n} =0$, on $\Gamma_y$. A discrete $W^{1,\infty}_h$ bound is assumed for the constructed approximate solutions:
\begin{equation} 
    \| \bU^{n+1} \|_{W^{1,\infty}_h},\ \| \mV^{n+1} \|_{W^{1,\infty}_h},\ \| \Phi_N^{n+1} \|_{W^{1,\infty}_h} \leq C^*, \quad \mbox{with} \, \, \, \| f \|_{W_h^{1,\infty}} := \| f \|_\infty + \| \nabla_h f \|_\infty . \label{W1 infty}
\end{equation}
In addition, we make the following a-priori assumption for the previous time step
\begin{equation}
   \|\tilde{\bu}^{n}\|_2,\ \dt \|\nabla_h\tilde{p}^{n}\|_2,\  \| \nabla_h\tilde{\phi}^n \|_2,\  \| \tilde{\phi}^{T,n} \|_{2,\Gamma},\  \| \tilde{\phi}^{B,n} \|_{2,\Gamma} \leq \dt^{\frac{7}{8}}+h^{\frac{15}{8}}. \label{a-priori assumption}
\end{equation}
Such an assumption will be recovered by the convergence analysis at the next time step, which will be illustrated later. In turn, this a-priori assumption leads to an $\ell^\infty$ bound, based on the inverse inequality \eqref{inverse ineq} with $\delta=\frac18$, and the linear refinement requirement $C_1 h \leq \dt \leq C_2 h$:
\begin{equation}
    \|\tilde{\phi}^n\|_\infty \leq C h^{-\frac18} \| \nabla_h \tilde{\phi}^n\|_2 \leq C(\dt^\frac34+h^\frac74) 
    \le \dt^\frac58 \leq 1.
\end{equation}
Before proceeding into the rough error estimate, a bound for $b_h(\bu,\bv)$ is needed.
\begin{lem} \cite{chen24b} \label{binary lemma}
   For the bilinear form $b_h(\bu,\bv)$ in \eqref{bilinear form}, the following inequality is valid:
    \begin{align}
        |\left( b_h(\bu,\bv),\bw \right)| \leq \frac{1}{2} \|\bu\|_2 \left( \|\nabla_h \bv\|_\infty \cdot \| \bw \|_2  + \| \nabla_h \bw \|_2 \cdot \| \bv\|_\infty \right).\label{binary lemma 2}
    \end{align}
\end{lem}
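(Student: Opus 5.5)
We must prove Lemma \ref{binary lemma}: $|(b_h(\bu,\bv),\bw)|\le \frac12\|\bu\|_2(\|\nabla_h\bv\|_\infty\|\bw\|_2+\|\nabla_h\bw\|_2\|\bv\|_\infty)$. The plan is to split $b_h$ into its two halves, $\frac12(\bu\cdot\nabla_h\bv,\bw)$ and $\frac12(\nabla_h\cdot(\bu\bv^T),\bw)$, and bound each by one of the two products on the right-hand side.

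\textbf{Step 1: the advective half.} Expand $(\bu\cdot\nabla_h\bv,\bw)$ component by component using the staggered definitions. For the $x$-component this gives $\eipx{u^x\tilde D_x v^x + A_{xy}u^y\tilde D_y v^x}{w^x}$, and the $y$-component is analogous. In each product, pull out the difference factor in the maximum norm. The wide-stencil differences $\tilde D_x v^x$, $\tilde D_y v^x$ are averages of two adjacent one-sided differences, so their sup norm is bounded by $\|\nabla_h\bv\|_\infty$. Then apply Cauchy--Schwarz. The averaging operators $A_{xy}$ and $a_{xy}$ are convex combinations, so $\|A_{xy}u^y\|_2\le\|u^y\|_2$ up to boundary weights. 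Summing the two components gives $|(\bu\cdot\nabla_h\bv,\bw)|\le\|\bu\|_2\|\nabla_h\bv\|_\infty\|\bw\|_2$. A single Cauchy--Schwarz on the vector level, $\sum_k |u_k|\,|w_k|\le \|\bu\|_2\|\bw\|_2$, avoids picking up a factor $\sqrt2$.

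\textbf{Step 2: the divergence half.} Apply the summation-by-parts formula \eqref{lemma 1-0-1} of Lemma \ref{lemma1}, in its staggered variants, to move the difference operators $\tilde D_x$, $D_y$, $d_x$, $\tilde D_y$ from $\bu\bv^T$ onto $\bw$. The boundary terms vanish. For the $y$-component this is because $w^y$ and $u^y$ are zero at $j=0,N$. For the $x$-component at the $D_y$ flux, it is because $A_xu^y=0$ on the boundary, so the flux $A_xu^y\,a_yv^x$ vanishes there; equivalently, the no-slip ghost relation $u^x_{\cdot,\hf}+u^x_{\cdot,-\hf}=0$ kills the averaged boundary value. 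The result is
\[
(\nabla_h\cdot(\bu\bv^T),\bw)=-\sum(\text{products of } u\text{-averages},\ v\text{-averages},\ \text{differences of } \bw).
\]
Bound the averaged $v$ factors by $\|\bv\|_\infty$, then use Cauchy--Schwarz for $\|\bu\|_2\|\nabla_h\bw\|_2$. Multiplying both halves by $\frac12$ yields \eqref{binary lemma 2}.

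\textbf{Main obstacle.} The delicate step is Step 2 near $y=0,1$. There one must check that the trapezoid weights $w_j=\frac12$, the ghost cells in ${\mathcal G}^+_{\mathrm{p},x}$, and the mixed averages $a_y$, $A_x$ produce exactly a discrete gradient of $\bw$ with no leftover boundary flux, and that each grid value of $\bu$ is counted with total weight at most one, so the constant stays $1$ rather than $C$. My first attempt would be to do the index shift explicitly on the bottom row $j=0$, using $u^y_{i,0}=0$ and the ghost relation for $u^x$. If a stray factor appears, I would accept a generic constant, which suffices for all later uses; this is consistent with the result being cited from \cite{chen24b}, which handled the same staggered stencil.
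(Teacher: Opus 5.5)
Your route is the one the paper intends. The paper gives no proof beyond noting that the estimate follows from summation by parts and the discrete H\"older inequality, citing \cite{chen24b}. Your split, H\"older directly on the advective half and summation by parts then H\"older on the divergence half, is exactly that argument.

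Your boundary bookkeeping in Step 2 is also right, with one correction. The $D_y$-flux $A_xu^y\,a_yv^x$ vanishes at $j=0,N$ because $u^y_{\cdot,0}=u^y_{\cdot,N}=0$; the ghost relation for $u^x$ plays no role, since the average $a_y$ acts on $v^x$. Together with $w^y_{\cdot,0}=w^y_{\cdot,N}=0$ for the $\tilde D_y(u^yv^y)$ term, this uses no-penetration of $\bu$ and $\bw$, which holds in the application $\bu=\tilde{\bu}^n$, $\bw=\hat{\tilde{\bu}}^{n+1}$.

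What does not hold as written is your claim that Step 1 gives constant $1$. After extracting $\|\nabla_h\bv\|_\infty$ you are not left with $\sum_k|u_k||w_k|$. The mixed terms $\eipx{A_{xy}u^y\,\tilde D_yv^x}{w^x}$ and $\eipy{a_{xy}u^x\,\tilde D_xv^y}{w^y}$ pair $u^y$ with $w^x$ and $u^x$ with $w^y$. You therefore get a diagonal sum plus a cross sum, each bounded by $\|\bu\|_2\|\bw\|_2$, which is a factor $2$.

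If $\|\nabla_h\bv\|_\infty$ means the maximum over the four difference components, this factor is sharp. Take $\bv$ locally linear with all four differences equal to $M$, and $\bu=\bw$ the same bump in both components; then $(\bu\cdot\nabla_h\bv,\bw)\approx 2M\|\bu\|_2\|\bw\|_2$. Likewise, in Step 2 each component of $\bu$ meets two entries of $\nabla_h\bw$, which costs $\sqrt2$ unless $\|\bv\|_\infty$ is a pointwise Euclidean norm.

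So the literal $\frac12$ depends on reading the vector and tensor maximum norms as pointwise Euclidean/Frobenius norms. On the staggered grid that requires an interpolation that neither you nor the paper specifies. Under the componentwise reading, your argument proves the inequality with $\frac12$ replaced by $1$. Your fallback to a generic constant is therefore the correct conclusion, and it is all that \eqref{rough error proof 1.2} needs, since the constants there are absorbed into $\tilde C_2$ and $\tilde C_3$.
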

The proof is based on the summation by parts formula and the discrete H{\"o}lder inequality, and the details are skipped for the sake of brevity. The following lemma states a rough error estimate.
\begin{lem}\label{rough error estimate lemma}
   Based on the $W^{1,\infty}_h$ regularity assumption~\eqref{W1 infty} for $\mV^{n+1}$, as well as the a-priori assumption~\eqref{a-priori assumption}, a rough error estimate is valid for the evolutionary system \eqref{error 1}-\eqref{error 8}:
    \begin{equation}
       \|\nabla_h \tilde{\phi}^{n+1}\|_2^2+ \dt^{-1} ( \| \tilde{\phi}^{B,n+1}\|_{2,\Gamma}^2 + \| \tilde{\phi}^{T,n+1}\|_{2,\Gamma}^2 ) +\|\tilde{\bu}^{n+1}\|_2^2 \leq C(\dt^{\frac{3}{4}} + h^{\frac{11}{4}}), \label{rough error estimate conclusion}
    \end{equation}  
provided that the time step size $\dt$ and spatial mesh size $h$ are sufficiently small.
\end{lem}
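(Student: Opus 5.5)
The plan is to mimic the energy stability proof of Theorem~\ref{thm: energy dissipation} at the level of the error system \eqref{error 1}--\eqref{error 8}, with test functions chosen to generate the energy norms. Concretely:
\begin{itemize}
\item take the inner product of \eqref{error 1} with $\gamma^{-1}\hat{\tilde{\bu}}^{n+1}$;
\item take the inner product of \eqref{error 2} with $\tilde{\mu}^{n+1}$;
\item take the boundary inner products of \eqref{error 5} with $\tilde{\mu}_B^{n+1}$ and $\tilde{\mu}_T^{n+1}$;
\item take the inner product of \eqref{error 8} with $\tilde{\bu}^{n+1}$.
\end{itemize}

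As in \eqref{energy 1.1}--\eqref{energy 1.11}, the coupled terms $\pm(A_h\phi^n\nabla_h\tilde\mu^{n+1},\hat{\tilde\bu}^{n+1})$ cancel exactly by the summation-by-parts formula \eqref{lemma 1-0-1}, because $\hat{\tilde\bu}^{n+1}$ vanishes on $\Gamma$. The skew-symmetric form $b_h(\bu^n,\cdot)$ yields $(b_h(\bu^n,\hat{\tilde\bu}^{n+1}),\hat{\tilde\bu}^{n+1})=0$. The pressure terms are handled exactly as in \eqref{energy 1.8}, producing $\frac{\dt^2}{2\gamma}(\|\nabla_h\tilde p^{n+1}\|_2^2-\|\nabla_h\tilde p^n\|_2^2)$ plus a $\tau_p$ contribution.

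For the phase part, I would write $(\tilde\phi^{n+1}-\tilde\phi^n,\tilde\mu^{n+1})$ and expand $\tilde\mu^{n+1}$ by \eqref{error 3}. The Laplacian term gives
\[
\tfrac{\varepsilon^2}{2}\big(\|\nabla_h\tilde\phi^{n+1}\|_2^2-\|\nabla_h\tilde\phi^n\|_2^2+\|\nabla_h(\tilde\phi^{n+1}-\tilde\phi^n)\|_2^2\big)
\]
plus boundary terms $\mp\varepsilon^2(\tilde\phi^{\cdot,n+1}-\tilde\phi^{\cdot,n},\tilde D_y\tilde\phi^{n+1})_\Gamma$. Via \eqref{error 6}--\eqref{error 7}, these boundary terms are replaced by $-\dt^{-1}\|\tilde\phi^{B,n+1}-\tilde\phi^{B,n}\|_{2,\Gamma}^2$, a nonlinear boundary difference, $\tilde\phi^{B,n}$, and $\tau_B$ contributions. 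The key point is that on the boundary we obtain the dissipation $\dt\|\tilde\mu_B^{n+1}\|_{2,\Gamma}^2$. Also, $\tilde\phi^{B,n+1}=\tilde\phi^{B,n}+\dt(-\tilde\mu_B^{n+1}+\tau_B)$. Testing \eqref{error 5} additionally by $\tilde\phi^{B,n+1}$ then gives
\[
\tfrac{1}{2}\big(\|\tilde\phi^{B,n+1}\|_{2,\Gamma}^2-\|\tilde\phi^{B,n}\|_{2,\Gamma}^2\big)\le \dt\|\tilde\mu_B^{n+1}\|\,\|\tilde\phi^{B,n+1}\|+\dots
\]
This is the source of the factor $\dt^{-1}$ in front of the boundary norms in \eqref{rough error estimate conclusion}. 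That is, the rough estimate only needs
\[
\|\tilde\phi^{B,n+1}\|_{2,\Gamma}^2\lesssim\|\tilde\phi^{B,n}\|^2_{2,\Gamma}+\dt^2\|\tilde\mu_B^{n+1}\|^2+\dt^2\|\tau_B\|^2,
\]
and the first term is $O(\dt^{7/4}+h^{15/4})$ by \eqref{a-priori assumption}.

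The rough character of the estimate is that we do not try to close a Gronwall argument. Instead, every term involving $n$-level errors or the nonlinear cubic differences is bounded crudely using \eqref{a-priori assumption}, \eqref{W1 infty}, the $\ell^\infty$ bound $\|\tilde\phi^n\|_\infty\le\dt^{5/8}$, and Lemma~\ref{inverse inequality lemma}. The steps are as follows.
\begin{itemize}
\item The term $(\tilde\phi^{n+1}-\tilde\phi^n,(\Phi_N^{n+1})^3-(\phi^{n+1})^3)$ is the convex part. Writing $(\phi^{n+1})^3$ as $(\Phi_N^{n+1}-\tilde\phi^{n+1})^3$, the fully cubic piece $(\tilde\phi^{n+1})^3\tilde\phi^{n+1}\ge0$ has the good sign. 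The remaining pieces involve $\Phi_N$, bounded in $W^{1,\infty}$, and are controlled by $\|\tilde\phi^{n+1}\|_6\le C_1\|\nabla_h\tilde\phi^{n+1}\|_2$, which is absorbed into the left side.
\item The convection-coupling terms $(\nabla_h\!\cdot\!(A_h\tilde\phi^n\bU^{n+1}),\tilde\mu^{n+1})$ and $\gamma(A_h\tilde\phi^n\nabla_h\mV^{n+1},\hat{\tilde\bu}^{n+1})$ are bounded by $C\|\tilde\phi^n\|_2(\|\nabla_h\tilde\mu^{n+1}\|_2+\|\hat{\tilde\bu}^{n+1}\|_2)$. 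Here we use Poincar\'e (Lemma~\ref{lem:Poincare}) with $\overline{\tilde\phi^n}=0$, and the resulting factors are absorbed into the dissipations $\dt\|\nabla_h\tilde\mu^{n+1}\|_2^2$ and $\dt\nu\|\nabla_h\hat{\tilde\bu}^{n+1}\|_2^2$.
\item The term $b_h(\tilde\bu^n,\bU^{n+1})$ is handled by Lemma~\ref{binary lemma}.
\item The truncation terms, including $\tau_\mu$ paired with $\tilde\phi^{n+1}-\tilde\phi^n$, contribute $O(\dt(\dt+h^2)^2)$.
\end{itemize}
Collecting the pieces gives
\[
\|\nabla_h\tilde\phi^{n+1}\|_2^2+\|\tilde\bu^{n+1}\|_2^2\le C\big(\|\nabla_h\tilde\phi^n\|^2+\|\tilde\bu^n\|^2+\dt^2\|\nabla_h\tilde p^n\|^2+\dt\,\text{(a-priori)}^2\big)+C\dt(\dt+h^2)^2 .
\]
The a-priori assumption bounds the right side by $C(\dt^{7/4}+h^{15/4})\le C(\dt^{3/4}+h^{11/4})$. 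This crude bound is all that is claimed.

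The main obstacle I expect is the cubic term $(\tilde\phi^{n+1}-\tilde\phi^n,(\Phi_N^{n+1})^3-(\phi^{n+1})^3)$, because there is no a priori $\ell^\infty$ bound on $\phi^{n+1}$. My first attempt is to expand the cubic term around $\Phi_N^{n+1}$ as above, keeping the quartic piece $\tfrac14\|\tilde\phi^{n+1}\|_4^4$ minus the $\tilde\phi^n$ coupling. The cross term $((\tilde\phi^{n+1})^3,\tilde\phi^n)$ is then controlled with $\|\tilde\phi^n\|_\infty\le\dt^{5/8}$ and Young's inequality, which absorbs it into the quartic term. The analogous boundary cubic terms would be treated identically in one dimension.
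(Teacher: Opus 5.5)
\textbf{There is a gap, at exactly the step you flag as the main obstacle.}

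After you expand $(\phi^{n+1})^3=(\Phi_N^{n+1}-\tilde\phi^{n+1})^3$, the ``remaining pieces involving $\Phi_N$'' include $-3(\Phi_N^{n+1}(\tilde\phi^{n+1})^2,\tilde\phi^{n+1})$, which is cubic in the unknown error. Your proposed bounds do not control it:
\begin{itemize}
\item The Sobolev bound $\|\tilde\phi^{n+1}\|_6\le C_1\|\nabla_h\tilde\phi^{n+1}\|_2$ only turns it into $C\|\nabla_h\tilde\phi^{n+1}\|_2^3$. This cannot be absorbed into the quadratic left side unless you already know $\|\nabla_h\tilde\phi^{n+1}\|_2$ is small, which is the conclusion of the lemma.
\item Young's inequality into a quartic term leaves $C(C^*)^2\|\tilde\phi^{n+1}\|_2^2$. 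There is no reason for this constant to be below $\varepsilon^2/(2C_0^2)$.
\item The telescoped $\frac14\|\tilde\phi^{n+1}\|_4^4$ of your last paragraph does not dominate it pointwise either: $\frac14t^4-3\Phi t^3+3\Phi^2t^2<0$ for, e.g., $\Phi=1$, $t=6$.
\end{itemize}
On $\Gamma$ the situation is worse. Since $\kappa=0$ there is no surface-gradient dissipation at all, so ``treated identically in one dimension'' has nothing to absorb into.

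\textbf{The repair:} never split the new-level pairing. Pointwise, $(a^3-b^3)(a-b)=(a-b)^2(a^2+ab+b^2)\ge\frac14(a-b)^4$. Hence $(\tilde\phi^{n+1},(\Phi_N^{n+1})^3-(\phi^{n+1})^3)\ge\frac14\|\tilde\phi^{n+1}\|_4^4$, and likewise $(\tilde\phi^{B,n+1},(\Phi_N^{B,n+1})^3-(\phi^{B,n+1})^3)_\Gamma\ge\frac14\|\tilde\phi^{B,n+1}\|_{4,\Gamma}^4$. Only the old-level cross terms $(\tilde\phi^{n},\cdot)$ and $(\tilde\phi^{B,n},\cdot)_\Gamma$ need the expansion. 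For those, two ingredients suffice: $\|\tilde\phi^n\|_\infty\le\dt^{5/8}$, which also bounds the trace $\tilde\phi^{B,n}$, and your bound on $\|\tilde\phi^{B,n+1}\|_{2,\Gamma}$ obtained from \eqref{error 5}. With this change your telescoping argument closes.

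\textbf{Comparison with the paper.} The paper takes a different and simpler route that never produces a nonlinear cross term. It does not telescope. Instead it splits $\frac{1}{\dt}(\tilde\phi^{n+1}-\tilde\phi^n,\tilde\mu^{n+1})$ into two parts:
\begin{itemize}
\item $\frac{1}{\dt}(\tilde\phi^{n+1},\tilde\mu^{n+1})$, where only the sign $(\tilde\phi^{n+1},(\Phi_N^{n+1})^3-(\phi^{n+1})^3)\ge 0$ is used;
\item $\frac{1}{\dt}(\tilde\phi^n,\tilde\mu^{n+1})\le\frac{1}{\dt}\|\tilde\phi^n\|_{-1,h}\|\nabla_h\tilde\mu^{n+1}\|_2$, which is absorbed into the chemical-potential dissipation by $H^{-1}$--$H^1$ duality. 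This uses the Neumann condition for $\tilde\mu^{n+1}$ and $\overline{\tilde\phi^n}=0$.
\end{itemize}
On $\Gamma$ it pairs only with $\tilde\phi^{B,n+1}$ and $\tilde\phi^{T,n+1}$, which yields $\big(\frac{1}{2\dt}-1\big)\|\tilde\phi^{B,n+1}\|_{2,\Gamma}^2$ directly. No $\ell^\infty$ information is needed. The price is a term $\dt^{-1}\|\tilde\phi^n\|_2^2$ on the right, which is precisely the source of the $\dt^{3/4}+h^{11/4}$ rate.

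Your repaired route avoids that loss in the bulk: it gives $O(\dt^{7/4}+h^{15/4})$ for $\|\nabla_h\tilde\phi^{n+1}\|_2^2+\|\tilde{\bu}^{n+1}\|_2^2$. In exchange, it needs the a priori $\ell^\infty$ bound and more bookkeeping.
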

\begin{proof}
 Taking a discrete inner product with \eqref{error 1} by $\hat{\tilde{\bu}}^{n+1}$ leads to
\begin{align}
    \frac{1}{2\dt}( &\| \hat{\tilde{\bu}}^{n+1} \|_2^2 - \| \tilde{\bu}^n \|_2^2 + \|\hat{\tilde{\bu}}^{n+1}-\tilde{\bu}^n\|_2^2 ) + \left( b_h(\tilde{\bu}^n,\bU^{n+1}) +  b_h(\bu^n,\hat{\tilde{\bu}}^{n+1}), \hat{\tilde{\bu}}^{n+1}\right)+ \nu\|\nabla_h \hat{\tilde{\bu}}^{n+1}\|_2^2 \nonumber \\
    &  = -\left( \nabla_h \tilde{p}^{n}, \hat{\tilde{\bu}}^{n+1}\right)-\gamma \left( \tilde{\phi}^n \nabla_h \mV^{n+1}, \hat{\tilde{\bu}}^{n+1}\right) -\gamma\left(\phi^n \nabla_h \tilde{\mu}^{n+1} , \hat{\tilde{\bu}}^{n+1}\right)  + (\tau_{\bu}^{n+1}, \hat{\tilde{\bu}}^{n+1}) , \label{rough error proof 1.0}
\end{align}
with an repeated application of summation-by-parts formulas. By Lemma \ref{binary lemma}, we get 
\begin{align}
    \left( b_h(\bu^n,\hat{\tilde{\bu}}^{n+1}), \hat{\tilde{\bu}}^{n+1}\right) = & 0, \label{rough error proof 1.1} \\
   \left( b_h(\tilde{\bu}^n,\bU^{n+1}), \hat{\tilde{\bu}}^{n+1} \right) \ge & -\frac{1}{2}\|\tilde{\bu}^n\|_2(\|\nabla_h \bU^{n+1} \|_{\infty} \cdot \|\hat{\tilde{\bu}}^{n+1}\|_2 
    + \|\nabla_h \hat{\tilde{\bu}}^{n+1}\|_2 \cdot \|\bU^{n+1}\|_{\infty} ) \nonumber \\
      \ge & -\frac{\nu}{4} \| \nabla_h\hat{\tilde{\bu}}^{n+1} \|_2^2 
    - \frac{C^*}{4} \| \hat{\tilde{\bu}}^{n+1} \|_2^2 - \tilde{C}_2 \|\tilde{\bu}^n\|_2^2 , \label{rough error proof 1.2}
\end{align}
with $\tilde{C}_2 =\frac{C^*}{4} + (C^*)^2 \nu^{-1}$, where the $W_h^{1,\infty}$ assumption \eqref{W1 infty} and the discrete Poincar{\'e} inequality, $\|\hat{\tilde{\bu}}^{n+1}\|_2 \leq C_0\|\nabla_h \hat{\tilde{\bu}}^{n+1}\|_2$, have been applied. In terms of the pressure gradient, we see that
\begin{align}
    & 
    ( \nabla_h \tilde{p}^n,  \hat{\tilde{\bu}}^{n+1} )= \left( \nabla_h \tilde{p}^n, \tilde{\bu}^{n+1} + \dt \nabla_h(\tilde{p}^{n+1}-\tilde{p}^n)-\dt \tau^{n+1}_p\right) \nonumber \\
    = & \frac{\dt}{2}(\|\nabla_h\tilde{p}^{n+1}\|_2^2-\|\nabla_h\tilde{p}^n\|_2^2-\|\nabla_h(\tilde{p}^{n+1}-\tilde{p}^n)\|_2^2) - \dt (\nabla_h \tilde{p}^n, \tau^{n+1}_p) \nonumber \\
    \ge &  \frac{\dt}{2}(\|\nabla_h\tilde{p}^{n+1}\|_2^2 - \|\nabla_h\tilde{p}^n\|_2^2-\|\nabla_h(\tilde{p}^{n+1}-\tilde{p}^n)\|_2^2) - \frac{\dt^2}{2} \|\nabla_h\tilde{p}^n\|_2^2 - \frac12 \|\tau^{n+1}_p\|_2^2 , \label{rough error proof 1.3}
\end{align}
in which the identity $( \nabla_h \tilde{p}^n, \tilde{\bu}^{n+1} )=0$ has been used in the derivation, which comes from the fact that $\nabla_h \cdot \tilde{\bu}^{n+1} =0$, $( \tilde{\bu}^{n+1} \cdot \n )_\Gamma=0$. Regarding the second term on the right hand side of \eqref{rough error proof 1.0}, an application of discrete H{\"o}lder inequality implies that
 \begin{align}
     -\gamma ( \tilde{\phi}^n \nabla_h \mV^{n+1}, \hat{\tilde{\bu}}^{n+1} ) \le &  \gamma \|\nabla_h \mV^{n+1}\|_\infty 
     \cdot \|\tilde{\phi}^n\|_2 \cdot \|\hat{\tilde{\bu}}^{n+1}\|_2  \nonumber \\ 
     \le & C^* \gamma \|\tilde{\phi}^n\|_2 \cdot \|\hat{\tilde{\bu}}^{n+1}\|_2 
     \le \frac{C^* \gamma}{2} ( \|\tilde{\phi}^n\|_2^2 + \| \hat{\tilde{\bu}}^{n+1}\|_2^2 ) . 
     \label{rough error proof 1.4}
 \end{align}
The local truncation error term could be controlled in a straightforward manner
 \begin{align}
     (\tau_{\bu}^{n+1}, \hat{\tilde{\bu}}^{n+1}) \leq \|\tau_{\bu}^{n+1}\|_2 
     \cdot \| \hat{\tilde{\bu}}^{n+1}\|_2 \leq \frac12 ( \| \hat{\tilde{\bu}}^{n+1}\|_2^2 
     + \|\tau_{\bu}^{n+1}\|_2^2 ) . \label{rough error proof 1.5}
 \end{align} 
Meanwhile, a discrete inner product with \eqref{error 8} by $2 \tilde{\bu}^{n+1}$ reveals that
\begin{align}
    & \|\tilde{\bu}^{n+1}\|_2^2-\|\hat{\tilde{\bu}}^{n+1}\|_2^2+\|\tilde{\bu}^{n+1}-\hat{\tilde{\bu}}^{n+1}\|_2^2=2\dt(\tau_p^{n+1}, \tilde{\bu}^{n+1}) = 0 , \quad \mbox{so that} \nonumber \\
    & \|\hat{\tilde{\bu}}^{n+1}\|_2^2 = \|\tilde{\bu}^{n+1}\|_2^2 + \dt^2 \|\nabla_h(\tilde{p}^{n+1}-\tilde{p}^{n})+\tau_p^{n+1}\|_2^2  \nonumber \\
    & \quad\quad\quad\quad \geq \|\tilde{\bu}^{n+1}\|_2^2 + \dt^2 \|\nabla_h(\tilde{p}^{n+1}-\tilde{p}^{n})\|_2^2 
    - \frac{\dt^3}{2} \|\nabla_h(\tilde{p}^{n+1}-\tilde{p}^{n})\|_2^2 - \frac{\dt}{2} \|\tau_p^{n+1}\|_2^2  . 
    \label{rough error proof 1.6}
\end{align} 
In particular, notice that an identity $(\tau_p^{n+1}, \tilde{\bu}^{n+1}) =0$ has been applied in the first step, based on the fact that $\tau_p^{n+1} = \nabla_h ( P^{n+1} - P^n)$. A combination of \eqref{rough error proof 1.0}-\eqref{rough error proof 1.6} results in
\begin{align}
  &
    \frac{1}{2\dt}( \|\tilde{\bu}^{n+1}\|_2^2-\|\tilde{\bu}^n\|_2^2 + \| \hat{\tilde{\bu}}^{n+1} - \tilde{\bu}^n\|_2^2) 
    +\frac{\dt}{2}(\|\nabla_h \tilde{p}^{n+1}\|_2^2 - \|\nabla_h \tilde{p}^n\|_2^2) 
    + \frac{3 \nu}{4}\|\nabla_h \hat{\tilde{\bu}}^{n+1}\|_2^2  \nonumber \\
   \le & - \gamma (A_h\phi^n \nabla_h \tilde{\mu}^{n+1} , \hat{\tilde{\bu}}^{n+1} ) 
   + \frac12 \|\tau_{\bu}^{n+1}\|_2^2 + \tilde{C}_2 \|\tilde{\bu}^n\|_2^2 
   + \tilde{C}_3 \| \hat{\tilde{\bu}}^{n+1}\|_2^2 + \frac{C^* \gamma}{2} \|\tilde{\phi}^n\|_2^2 \nonumber \\ 
  & 
  + \frac{\dt^2}{2} \|\nabla_h\tilde{p}^{n+1} \|_2^2 
  + \dt^2 \|\nabla_h\tilde{p}^n \|_2^2 + \frac34 \|\tau^{n+1}_p\|_2^2 , \quad 
   \tilde{C}_3 = \frac{C^*}{4} + \frac{C^* \gamma}{2} + \frac12.  \label{rough error proof 1.7}
\end{align}
On the other hand, taking a discrete inner product with \eqref{error 2} by $\tilde{\mu}^{n+1}$ gives 
\begin{align}
    \frac{1}{\dt}(\tilde{\phi}^{n+1},\tilde{\mu}^{n+1})&+\|\nabla_h \tilde{\mu}^{n+1}\|_2^2-\left( A_h \phi^n \nabla_h \tilde{\mu}^{n+1}, \hat{\tilde{\bu}}^{n+1} \right) \nonumber \\ 
    = & ( A_h \tilde{\phi}^{n} \nabla_h \tilde{\mu}^{n+1}, \bU^{n+1} ) 
      + (\tau_{\phi}^{n+1},\tilde{\mu}^{n+1})
      +\frac{1}{\dt}(\tilde{\phi}^n,\tilde{\mu}^{n+1}) , \label{rough error proof 2.0} 
\end{align}
in which the homogeneous Neumann boundary condition for $\tilde{\mu}^{n+1}$ has been used. The bounds of the right hand side terms are straightforward:
\begin{align}
    & ( A_h \tilde{\phi}^{n} \nabla_h \tilde{\mu}^{n+1}, \bU^{n+1} ) \leq C^*\|\tilde{\phi}^{n}\|_2 \cdot \|\nabla_h \tilde{\mu}^{n+1}\|_2 \leq \frac{1}{4}\|\nabla_h \tilde{\mu}^{n+1}\|_2^2 + (C^*)^2\|\tilde{\phi}^{n}\|_2^2, 
    \label{rough error proof 2.1}  \\
    & (\tau_{\phi}^{n+1},\tilde{\mu}^{n+1}) \leq \|\tau_{\phi}^{n+1}\|_{-1, h} \cdot \| \nabla_h \tilde{\mu}^{n+1}\|_2 
     \le  \frac{1}{4}\|\nabla_h \tilde{\mu}^{n+1}\|_2^2 + \|\tau_{\phi}^{n+1}\|_{-1, h}^2, \label{rough error proof 2.2}  \\
    & \frac{1}{\dt}(\tilde{\phi}^n,\tilde{\mu}^{n+1}) \leq  \|\tilde{\phi}^n\|_{-1, h} \cdot \|\nabla_h\tilde{\mu}^{n+1}\|_2 \leq \frac{1}{4}\|\nabla_h \tilde{\mu}^{n+1}\|_2^2 + \dt^{-2} \|\tilde{\phi}^n\|_{-1, h}^2 , \label{rough error proof 2.3}  
\end{align}
in which the mean-zero property for $\tilde{\phi}^{n}$ and $\tau_{\phi}^{n+1}$ is also mean-zero has been used in the derivation. 
The analysis for the $(\tilde{\phi}^{n+1},\tilde{\mu}^{n+1})$ turns out to be more complicated. Based on the convexity estimate and discrete H{\"o}lder inequality, we see that 
\begin{align}
  & 
   \big(\tilde{\phi}^{n+1},(\Phi_N^{n+1})^3-(\phi^{n+1})^3 \big) \ge 0 , \quad 
   \big(\tilde{\phi}^{n+1},  \tau_\mu^{n+1} \big)  
   \ge - \varepsilon^2 \|\nabla_h \tilde{\phi}^{n+1} \|_2^2 /4 
   - C_0^2 \varepsilon^{-2} \| \tau_\mu^{n+1} \|_2^2 ,  \nonumber 
\\
  & 
 (  \tilde{\phi}^{n+1}, -\tilde{\phi}^n ) \ge -\|\tilde{\phi}^n\|_{-1, h} \cdot \| \nabla_h \tilde{\phi}^{n+1}\|_2 
 \ge - \varepsilon^{-2} \|\tilde{\phi}^n\|_{-1, h}^2 + \varepsilon^2 \|\nabla_h \tilde{\phi}^{n+1} \|_2^2 /4 , 
    \label{rough error proof 2.4}  
\\
  & \big(\tilde{\phi}^{n+1}, - \Delta_h \tilde{\phi}^{n+1} + \tau_\mu^{n+1} \big) 
  = \frac12 \|\nabla_h \tilde{\phi}^{n+1} \|_2^2 - ( \tilde{\phi}^{T,n+1}, \tilde{D}_y \tilde{\phi}^{n+1}_{\cdot,N} )_\Gamma + ( \tilde{\phi}^{B,n+1},  \tilde{D}_y \tilde{\phi}^{n+1}_{\cdot,0} )_\Gamma , \nonumber  
\\
    & 
    ( \tilde{\phi}^{n+1} , \tilde{\mu}^{n+1} ) 
     = \big(\tilde{\phi}^{n+1},(\Phi_N^{n+1})^3-(\phi^{n+1})^3 -\tilde{\phi}^n - \Delta_h \tilde{\phi}^{n+1} + \tau_\mu^{n+1} \big) \nonumber 
 \\
    \ge &  - C_0^2 \varepsilon^{-2} ( \| \tilde{\phi}^n\|_2^2 + \| \tau_\mu^{n+1} \|_2^2 ) 
    + \frac{\varepsilon^2}{2} \|\nabla_h \tilde{\phi}^{n+1} \|_2^2 - ( \tilde{\phi}^{T,n+1}, \varepsilon^2 \tilde{D}_y \tilde{\phi}^{n+1}_{\cdot,N} )_\Gamma + ( \tilde{\phi}^{B,n+1}, \varepsilon^2 \tilde{D}_y \tilde{\phi}^{n+1}_{\cdot,0} )_\Gamma .     \nonumber 
\end{align}
In turn, a combination of \eqref{rough error proof 2.0}-\eqref{rough error proof 2.4} leads to
\begin{align}
    & \frac{\varepsilon^2}{2} \|\nabla_h \tilde{\phi}^{n+1} \|_2^2+\frac{\dt}{4}\|\nabla_h \tilde{\mu}^{n+1}\|_2^2-\dt\left( A_h \phi^n \nabla_h \tilde{\mu}^{n+1}, \hat{\tilde{\bu}}^{n+1} \right) 
    - C_0^2 \varepsilon^{-2}  \| \tau_\mu^{n+1} \|_2^2  
    \label{rough error proof 2.41}   \\
    \le &  \Big( \dt(C^*)^2 + \frac{C_0^2}{\dt} + \frac{C_0^2}{\varepsilon^2} \Big)  \|\tilde{\phi}^{n}\|_2^2 
    + \dt C_0 ^2 \|\tau_{\phi}^{n+1}\|_2^2 + ( \tilde{\phi}^{T,n+1}, \varepsilon^2 \tilde{D}_y \tilde{\phi}^{n+1}_{\cdot,N} )_\Gamma - ( \tilde{\phi}^{B,n+1}, \varepsilon^2 \tilde{D}_y \tilde{\phi}^{n+1}_{\cdot,0})_\Gamma , \nonumber
\end{align} 
in which the discrete inequality, $\| f \|_{-1, h} \le C_0 \| f \|_2$ (for $f$ with $\overline{f} =0$), has been applied. The evaluation of boundary terms relies on the evolutionary equations~\eqref{error 5}-\eqref{error 7}. In more details, taking a discrete inner production with \eqref{error 5} by $\tilde{\mu}_{B}^{n+1}$ implies that 
\begin{equation} 
\begin{aligned} 
& ( \tilde{\phi}^{B,n+1}, \varepsilon^2 \tilde{D}_y \tilde{\phi}^{n+1}_{\cdot,0})_\Gamma  
  = \dt^{-1} ( \tilde{\phi}^{B,n+1}, \tilde{\phi}^{B,n+1} - \tilde{\phi}^{B,n} )_\Gamma 
  + ( \tilde{\phi}^{B,n+1},  (\Phi_N^{B,n+1})^3-(\phi^{B,n+1})^3 )_\Gamma \\ 
&   \qquad \qquad \qquad \qquad \qquad 
  - ( \tilde{\phi}^{B,n+1}, \tilde{\phi}^{B,n} )_\Gamma 
    - ( \tilde{\phi}^{B,n+1}, \tau^{n+1}_B )_\Gamma , \\ 
& ( \tilde{\phi}^{B,n+1}, \tilde{\phi}^{B,n+1} - \tilde{\phi}^{B,n} )_\Gamma  
 \ge \frac12 ( \| \tilde{\phi}^{B,n+1} \|_{2, \Gamma}^2 - \| \tilde{\phi}^{B,n} \|_{2, \Gamma}^2 ) ,  \\ 
& ( \tilde{\phi}^{B,n+1},  (\Phi_N^{B,n+1})^3-(\phi^{B,n+1})^3 )_\Gamma \ge 0 , \quad  
  - ( \tilde{\phi}^{B,n+1}, \tilde{\phi}^{B,n} )_\Gamma 
   \ge - \frac12 ( \| \tilde{\phi}^{B,n+1} \|_{2, \Gamma}^2 + \| \tilde{\phi}^{B,n} \|_{2, \Gamma}^2 ) , \\ 
& - ( \tilde{\phi}^{B,n+1}, \tau^{n+1}_B )_\Gamma 
  \ge - \frac12 ( \| \tilde{\phi}^{B,n+1} \|_{2, \Gamma}^2  + \| \tau^{n+1}_B \|_{2, \Gamma}^2 ) ,   
  \quad \mbox{so that}  \\
& ( \tilde{\phi}^{B,n+1}, \varepsilon^2 \tilde{D}_y \tilde{\phi}^{n+1}_{\cdot,0})_\Gamma   
\ge ( \frac{1}{2 \dt} -1 )  \| \tilde{\phi}^{B,n+1} \|_{2, \Gamma}^2 
- ( \frac{1}{2 \dt} + 1) \| \tilde{\phi}^{B,n} \|_{2, \Gamma}^2  
  - \frac12 \| \tau^{n+1}_B \|_{2, \Gamma}^2 .  
\end{aligned} 
  \label{rough error proof 2.5}
\end{equation} 
A similar bound could be obtained on the top boundary section: 
\begin{equation}
   - ( \tilde{\phi}^{T,n+1}, \varepsilon^2 \tilde{D}_y \tilde{\phi}^{n+1}_{\cdot,N})_\Gamma   
\ge ( \frac{1}{2 \dt} -1 )  \| \tilde{\phi}^{T,n+1} \|_{2, \Gamma}^2 
- ( \frac{1}{2 \dt} + 1) \| \tilde{\phi}^{T,n} \|_{2, \Gamma}^2  
  - \frac12 \| \tau^{n+1}_T \|_{2, \Gamma}^2 . \label{rough error proof 2.6} 
\end{equation}
In turn, a substitution of~\eqref{rough error proof 2.5} and \eqref{rough error proof 2.6} into \eqref{rough error proof 2.41} results in 
\begin{align}
  & 
     \frac{\varepsilon^2}{2} \|\nabla_h \tilde{\phi}^{n+1} \|_2^2 + \frac{\dt^{-1}}{4}(\| \tilde{\phi}^{B,n+1}\|_{2,\Gamma}^2 + \| \tilde{\phi}^{T,n+1}\|_{2,\Gamma}^2 ) -\dt\left( A_h \phi^n \nabla_h \tilde{\mu}^{n+1}, \hat{\tilde{\bu}}^{n+1} \right) - C_0^2 \varepsilon^{-2}  \| \tau_\mu^{n+1} \|_2^2   \nonumber \\
    \le &  C \dt^{-1} (\|\tilde{\phi}^{n}\|_2^2 +\|\tilde{\phi}^{B,n}\|_{2,\Gamma}^2+\|\tilde{\phi}^{T,n}\|_{2,\Gamma}^2) 
    + C_0^2 \dt \|\tau_{\phi}^{n+1}\|_2^2+\frac12 ( \|\tau_B^{n+1}\|_{2,\Gamma}^2+\|\tau_T^{n+1}\|_{2,\Gamma}^2),
    \label{rough error proof 2.9} 
\end{align}
where $C$ is independent on $\dt$ and $h$. Furthermore, a combination of \eqref{rough error proof 1.7}  and \eqref{rough error proof 2.9} states that 
\begin{align}
  & 
     \frac{\varepsilon^2}{2} \|\nabla_h \tilde{\phi}^{n+1} \|_2^2 + \frac{\dt^{-1}}{4}(\| \tilde{\phi}^{B,n+1}\|_{2,\Gamma}^2 + \| \tilde{\phi}^{T,n+1}\|_{2,\Gamma}^2 ) + \frac{1}{2 \gamma}\|\tilde{\bu}^{n+1}\|_2^2 + \frac{\dt^2}{4 \gamma}\|\nabla_h \tilde{p}^{n+1}\|_2^2 \nonumber \\
     \le &  C \dt^{-1} (\|\tilde{\phi}^{n}\|_2^2 +\|\tilde{\phi}^{B,n}\|_{2,\Gamma}^2+\|\tilde{\phi}^{T,n}\|_{2,\Gamma}^2) 
     + C_0^2 \dt \|\tau_{\phi}^{n+1}\|_2^2+ \frac12 ( \|\tau_B^{n+1}\|_{2,\Gamma}^2+\|\tau_T^{n+1}\|_{2,\Gamma}^2) \label{rough error proof 2.10}  \\
    & + \frac{1}{2 \gamma}\|\tilde{\bu}^{n}\|_2^2 + \frac{\dt^2}{\gamma}\|\nabla_h \tilde{p}^{n}\|_2^2 + C \dt (\|\tau_{\bu}^{n+1}\|_2^2 + \|\tilde{\bu}^n\|_2^2 + \|\tilde{\phi}^n\|_2^2) + \frac{3\dt}{4 \gamma}\|\tau_{p}^{n+1}\|_2^2 + C_0^2 \varepsilon^{-2}  \| \tau_\mu^{n+1} \|_2^2  \nonumber , 
\end{align}
in which the Cauchy inequality: $\| \hat{\tilde{\bu}}^{n+1}\|_2^2 \le 2 ( \| \tilde{\bu}^n \|_2^2 + \| \hat{\tilde{\bu}}^{n+1} - \tilde{\bu}^n \|_2^2)$, has been applied in the derivation. Subsequently, based on the a-priori assumption \eqref{a-priori assumption}, combined with the linear refinement requirement $C_1 h \leq \dt \leq C_2 h$, the dominant order rough error estimate becomes available: 
\begin{align}
    \|\nabla_h \tilde{\phi}^{n+1}\|_2^2+ \dt^{-1} ( \| \tilde{\phi}^{B,n+1}\|_{2,\Gamma}^2 + \| \tilde{\phi}^{T,n+1}\|_{2,\Gamma}^2 ) +\|\tilde{\bu}^{n+1}\|_2^2 \leq C(\dt^{\frac{3}{4}} + h^{\frac{11}{4}}). \label{rough error proof 2.11} 
\end{align}
This finishes the proof of Lemma~\ref{rough error estimate lemma}. 
\end{proof}

As a consequence of the rough error estimate~\eqref{rough error proof 2.11}, an application of Lemma \ref{inverse inequality lemma} implies that
\begin{align} 
  & 
   \|\tilde{\phi}^{n+1}\|_\infty \le  C h^{-\frac18} \|\nabla_h \tilde{\phi}^{n+1}\|_2 \leq C(\dt^{\frac14} + h^{\frac54}) \le 1 , \label{rough error proof 3.1} \\
   & \| \nabla_h \tilde{\phi}^{n+1}\|_3 \le C h^{-\frac13} \|\nabla_h \tilde{\phi}^{n+1}\|_2 
  \le C(\dt^{\frac{1}{24}} + h^{\frac{25}{24}}) \le 1 . \label{rough error proof 3.2}  
\end{align}
In turn, an $\ell^\infty \cap W_h^{1,3}$ bound for the numerical solution becomes available: 
\begin{align} 
  & 
   \| \phi^{n+1} \|_\infty \le \| \Phi_N^{n+1} \|_\infty + \| \tilde{\phi}^{n+1}\|_\infty  
   \le C^* + 1 := \tilde{C}_1 , \label{rough error proof 3.4} \\
   & \| \nabla_h \phi^{n+1} \|_3 \le \| \nabla_h \Phi_N^{n+1} \|_\infty 
   + \| \nabla_h \tilde{\phi}^{n+1}\|_3 \le C^* + 1 := \tilde{C}_1 . \label{rough error proof 3.5} 
\end{align}

\subsection{A refined error estimate}
Before proceeding into the refined error estimate, the following preliminary result is necessary. 
\begin{lem} \label{nonlinear lemma}
    Based on the functional bounds~\eqref{W1 infty}, \eqref{rough error proof 3.4}-\eqref{rough error proof 3.5}, for the constructed and numerical solutions, the following bound is available to the nonlinear error term: 
    \begin{align}
    & \|\nabla_h( (\Phi_N^{n+1})^3-(\phi^{n+1})^3 )\|_2 \le \tilde{C}_4 \|\nabla_h\tilde{\phi}^{n+1}\|_2,  
    \label{refined nonlinear 1}   \\
     & \| (\Phi_N^{\alpha,n+1})^3-(\phi^{\alpha,n+1})^3 \|_{2,\Gamma} \le \tilde{C}_5 \| \tilde{\phi}^{\alpha,n+1}\|_{2,\Gamma} , \quad  \alpha = B, T , \label{refined nonlinear 2} 
    \end{align}
    in which constants $\tilde{C}_4$ and $\tilde{C}_5$ are independent on $\dt$ and $h$.
\end{lem}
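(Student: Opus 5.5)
The plan is to factor the cubic difference and apply a discrete product rule. Write
\[
(\Phi_N^{n+1})^3-(\phi^{n+1})^3 = \tilde{\phi}^{n+1}\, \mathcal{M}^{n+1}, \qquad \mathcal{M}^{n+1} := (\Phi_N^{n+1})^2+\Phi_N^{n+1}\phi^{n+1}+(\phi^{n+1})^2 .
\]
The boundary estimate \eqref{refined nonlinear 2} is the easier one and is handled first. Restrict this identity to $j=0$ (respectively $j=N$), i.e.\ to $\Phi_N^{B,n+1}$ and $\phi^{B,n+1}$. The bounds \eqref{W1 infty} and \eqref{rough error proof 3.4} are pointwise over the whole grid, including the boundary rows, so $\|\mathcal{M}^{n+1}\|_{\infty,\Gamma} \le 3\tilde{C}_1^2$. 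Hence
\[
\| (\Phi_N^{\alpha,n+1})^3-(\phi^{\alpha,n+1})^3\|_{2,\Gamma} \le 3\tilde{C}_1^2 \|\tilde{\phi}^{\alpha,n+1}\|_{2,\Gamma}, \qquad \tilde{C}_5 = 3\tilde{C}_1^2 .
\]

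For the bulk estimate \eqref{refined nonlinear 1}, use the discrete Leibniz rule in the $x$-direction:
\[
D_x(fg)_{i+\hf,j} = A_x f \, D_x g + D_x f \, A_x g ,
\]
and the same in $y$ with $A_y,D_y$. This gives
\[
\nabla_h\bigl(\tilde{\phi}^{n+1}\mathcal{M}^{n+1}\bigr) = \mathcal{A}_h \mathcal{M}^{n+1}\, \nabla_h \tilde{\phi}^{n+1} + \mathcal{A}_h\tilde{\phi}^{n+1}\, \nabla_h \mathcal{M}^{n+1}.
\]
The first term is bounded by $\|\mathcal{M}^{n+1}\|_\infty \|\nabla_h\tilde{\phi}^{n+1}\|_2 \le 3\tilde{C}_1^2\|\nabla_h\tilde{\phi}^{n+1}\|_2$.

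For the second term, apply the Leibniz rule again to $\mathcal{M}^{n+1}$. Each piece of $\nabla_h \mathcal{M}^{n+1}$ is a product of an averaged $\Phi_N$ or $\phi$ factor (bounded in $\ell^\infty$ by $\tilde{C}_1$) with $\nabla_h\Phi_N^{n+1}$ (bounded in $\ell^\infty$ by $C^*$) or with $\nabla_h\phi^{n+1}$ (controlled only in $\ell^3$ by \eqref{rough error proof 3.5}). The $\nabla_h\Phi_N^{n+1}$ contributions are bounded by $C\|\tilde{\phi}^{n+1}\|_2 \le CC_0\|\nabla_h\tilde{\phi}^{n+1}\|_2$, using the Poincaré inequality of Lemma~\ref{lem:Poincare}, which applies since $\overline{\tilde{\phi}^{n+1}}=0$.

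The \textbf{main obstacle} is the remaining contributions containing $\nabla_h\phi^{n+1}$, for which no $\ell^\infty$ bound is available. For these, use the discrete Hölder inequality with exponents $(6,3)$:
\[
\bigl\|\mathcal{A}_h\tilde{\phi}^{n+1}\,\nabla_h\phi^{n+1}\bigr\|_2 \le \|\tilde{\phi}^{n+1}\|_6\, \|\nabla_h \phi^{n+1}\|_3 \le C_1\tilde{C}_1 \|\nabla_h\tilde{\phi}^{n+1}\|_2 ,
\]
invoking the $\ell^6$ embedding of Lemma~\ref{inverse inequality lemma}, again valid because $\tilde{\phi}^{n+1}$ is mean-zero. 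This is exactly why the rough estimate was arranged to deliver the $W_h^{1,3}$ bound \eqref{rough error proof 3.5}.

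Some care is needed with the averaging operators and the staggered locations. Since $\|A_x f\|_p \le \|f\|_p$ and similarly for $A_y$, averages cost nothing in these norms. The boundary weights $w_j$ in the inner product only change constants. Note that no ghost points enter, since only $D_y$ between rows $0,\dots,N$ is used in $\nabla_h$. Collecting all terms yields $\tilde{C}_4 = C(\tilde{C}_1^2 + \tilde{C}_1 C^* C_0 + C_1\tilde{C}_1^2)$, which is independent of $\dt$ and $h$.
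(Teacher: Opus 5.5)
Your proposal is correct and follows the paper's proof essentially step for step: the same factorization $\tilde{\phi}^{n+1}\mathcal{M}^{n+1}$, the same discrete product rule, and the same $(6,3)$ H\"older split, combining the $\ell^6$ embedding for the mean-zero error with the $W_h^{1,3}$ bound \eqref{rough error proof 3.5}. The differences are cosmetic. The paper bounds $\|\nabla_h \mathcal{M}^{n+1}\|_3$ in one stroke, treating the $\nabla_h\Phi_N^{n+1}$ pieces via $\|\cdot\|_3\le\|\cdot\|_\infty$ rather than splitting them off with Poincar\'e. It also leaves the boundary estimate as ``similarly derived,'' whereas you spell it out with $\tilde{C}_5=3\tilde{C}_1^2$.
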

\begin{proof}
Denote $\mN^{n+1} := (\Phi_N^{n+1})^2+\Phi_N^{n+1}\phi^{n+1}+(\phi^{n+1})^2$. A careful calculation indicates the product rule for discrete gradient 
\begin{equation}
    \nabla_h( (\Phi_N^{n+1})^3-(\phi^{n+1})^3 )=\nabla_h( \tilde{\phi}^{n+1}  \mN^{n+1} ) = A_h\tilde{\phi}^{n+1} \nabla_h \mN^{n+1} + A_h \mN^{n+1} \nabla_h \tilde{\phi}^{n+1} . \label{refined nonlinear 2.1} 
\end{equation}
Meanwhile, the following bounds come from the regularity assumption~\eqref{W1 infty} and the derived estimates \eqref{rough error proof 3.4}-\eqref{rough error proof 3.5}: 
\begin{align}
  & 
     \| A_h \mN^{n+1} \|_\infty \le \frac32 ( \| \Phi_N^{n+1} \|_\infty^2+ \| \phi^{n+1} \|_\infty^2 ) 
     \le \frac32 ( (C^*)^2 + \tilde{C}_1^2) \le 3 \tilde{C}_1^2 , \label{refined nonlinear 2.4}
\\
  & 
     \| \nabla_h \mN^{n+1} \|_3 \le 3 \max ( \| \Phi_N^{n+1} \|_\infty , \| \phi^{n+1} \|_\infty )  
     \cdot \max ( \| \nabla_h \Phi_N^{n+1} \|_3 , \| \nabla_h \phi^{n+1} \|_3 ) \le 3 \tilde{C}_1^2 . 
     \label{refined nonlinear 2.5}
\end{align}
In turn, we arrive at 
\begin{equation}
\begin{aligned} 
  & 
   \| \nabla_h( (\Phi_N^{n+1})^3-(\phi^{n+1})^3 )\|_2 
    \le \|  A_h\tilde{\phi}^{n+1} \|_6 \cdot \| \nabla_h \mN^{n+1} \|_3 
    + \| A_h \mN^{n+1} \|_\infty \cdot \| \nabla_h \tilde{\phi}^{n+1} \|_2  
 \\
   \le & 
     (C_1+ 1) ( \| \nabla_h \mN^{n+1} \|_3 + \| \mN^{n+1} \|_\infty ) \| \nabla_h \tilde{\phi}^{n+1} \|_2 
     \le 3 \tilde{C}_1^2 (C_1+ 1)  \| \nabla_h \tilde{\phi}^{n+1} \|_2 , 
\end{aligned} 
  \label{refined nonlinear 2.6} 
\end{equation}
in which the Sobolev inequality \eqref{inverse ineq} (in Lemma~\ref{inverse inequality lemma}) has been applied in the second step. Therefore, \eqref{refined nonlinear 1} has been proved, by taking $\tilde{C}_4 = 3 \tilde{C}_1^2 (C_1+ 1)$. The boundary nonlinear estimate~\eqref{refined nonlinear 2} could be similarly derived.
\end{proof}
Now we carry out the refined error estimate. Taking a discrete inner product with \eqref{error 2} by $-\Delta_h \tilde{\phi}^{n+1}$ leads to
\begin{align}
&\frac{1}{2\dt} ( \|\nabla_h \tilde{\phi}^{n+1}\|_2^2 -\|\nabla_h \tilde{\phi}^n\|_2^2 ) - \Big( \frac{\tilde{\phi}^{T,n+1}-\tilde{\phi}^{T,n}}{\dt}, \tilde{D}_y \tilde{\phi}^{n+1}_{\cdot,N} \Big)_\Gamma + \Big( \frac{\tilde{\phi}^{B,n+1}-\tilde{\phi}^{B,n}}{\dt}, \tilde{D}_y \tilde{\phi}^{n+1}_{\cdot,0} \Big)_\Gamma \nonumber \\
& \leq -\left(  A_h\tilde{\phi}^n\bU^{n+1} + A_h\phi^n\hat{\tilde{\bu}}^{n+1}, \nabla_h\Delta_h \tilde{\phi}^{n+1}\right) + (\nabla_h \tilde{\mu}^{n+1}, \nabla_h \Delta_h \tilde{\phi}^{n+1}) - (\tau^{n+1}_\phi , \Delta_h \tilde{\phi}^{n+1}) , \label{refined nonlinear 3.0}
\end{align}
in which the homogeneous Neumann boundary condition for $\tilde{\mu}^{n+1}$ has been applied. The boundary part could be evaluated as 
\begin{align}
    & - \Big( \frac{\tilde{\phi}^{T,n+1}-\tilde{\phi}^{T,n}}{\dt}, \tilde{D}_y \tilde{\phi}^{n+1}_{\cdot,N} \Big)_\Gamma 
   = \dt^{-2} \| \tilde{\phi}^{T,n+1} - \tilde{\phi}^{T,n} \|_{2, \Gamma}^2  
   - \dt^{-1} ( \tilde{\phi}^{T,n+1} - \tilde{\phi}^{T,n}, \tilde{\phi}^{T,n} )_\Gamma \nonumber \\ 
     &   \qquad \qquad   
  + \dt^{-1} ( \tilde{\phi}^{T,n+1} - \tilde{\phi}^{T,n} ,  (\Phi_N^{T,n+1})^3-(\phi^{T,n+1})^3 )_\Gamma 
    - \dt^{-1} ( \tilde{\phi}^{T,n+1} - \tilde{\phi}^{T,n} , \tau^{n+1}_T )_\Gamma , \nonumber \\
    & - ( \tilde{\phi}^{T,n+1} - \tilde{\phi}^{T,n}, \tilde{\phi}^{T,n} )_\Gamma  
    \ge - \frac{\dt^{-1}}{4} \| \tilde{\phi}^{T,n+1} - \tilde{\phi}^{T,n} \|_{2, \Gamma}^2 
    -  \dt \| \tilde{\phi}^{T,n} \|_{2, \Gamma}^2 , \nonumber\\ 
    & ( \tilde{\phi}^{T,n+1} - \tilde{\phi}^{T,n} ,  (\Phi_N^{T,n+1})^3-(\phi^{T,n+1})^3 )_\Gamma \nonumber \\ 
    & \quad \ge - \frac{\dt^{-1}}{4} \| \tilde{\phi}^{T,n+1} - \tilde{\phi}^{T,n} \|_{2, \Gamma}^2 
    - \dt \| ( \Phi_N^{T,n+1})^3-(\phi^{T,n+1})^3 \|_{2, \Gamma}^2 \label{refined nonlinear 3.1} \\ 
    & \quad \ge  - \frac{\dt^{-1}}{4} \| \tilde{\phi}^{T,n+1} - \tilde{\phi}^{T,n} \|_{2, \Gamma}^2 
    - \tilde{C}_5^2 \dt \|  \tilde{\phi}^{T,n+1} \|_{2, \Gamma}^2 ,  \quad 
    \mbox{(by~\eqref{refined nonlinear 2})} , \nonumber  \\
    & - ( \tilde{\phi}^{T,n+1} - \tilde{\phi}^{T,n}, \tau^{n+1}_T )_\Gamma  
    \ge - \frac{\dt^{-1}}{4} \| \tilde{\phi}^{T,n+1} - \tilde{\phi}^{T,n} \|_{2, \Gamma}^2 
    -  \dt \| \tau^{n+1}_T \|_{2, \Gamma}^2 ,  \quad \mbox{so that} \nonumber \\ 
    & - \Big( \frac{\tilde{\phi}^{T,n+1}-\tilde{\phi}^{T,n}}{\dt}, \tilde{D}_y \tilde{\phi}^{n+1}_{\cdot,N} \Big)_\Gamma 
   \ge \frac{\dt^{-2}}{2} \| \tilde{\phi}^{T,n+1} - \tilde{\phi}^{T,n} \|_{2, \Gamma}^2  
   -  \| \tilde{\phi}^{T,n} \|_{2, \Gamma}^2 -  \tilde{C}_5^2 \| \tilde{\phi}^{T,n+1} \|_{2, \Gamma}^2 - \| \tau^{n+1}_T \|_{2, \Gamma}^2 .\nonumber 
\end{align}

The following boundary error quantities are introduced to facilitate the convergence analysis:  
\begin{equation} 
  G^{\alpha, \ell} = \sum_{k=0}^{\ell-1} \dt^{-1} \| \tilde{\phi}^{\alpha,k+1} - \tilde{\phi}^{\alpha,k} \|_{2, \Gamma}^2  
  = \dt \sum_{k=0}^{\ell-1} \| \frac{\tilde{\phi}^{\alpha,k+1} - \tilde{\phi}^{\alpha,k}}{\dt} \|_{2, \Gamma}^2 , 
  \quad \alpha = B, T. 
  \label{quantity-G-1} 
\end{equation} 
Meanwhile, based on the fact that $\tilde{\phi}^{\alpha,0} \equiv 0$, for $\alpha = B, T$, the following estimate is observed: 
\begin{equation} 
  \tilde{\phi}^{\alpha,\ell} =  \sum_{k=0}^{\ell-1} ( \tilde{\phi}^{\alpha,k+1} - \tilde{\phi}^{\alpha,k} ) , \quad 
  \mbox{so that} \, \, \, 
  \| \tilde{\phi}^{\alpha,\ell} \|_{2, \Gamma}^2 \le \ell \sum_{k=0}^{\ell-1} 
  \| \tilde{\phi}^{\alpha,k+1} - \tilde{\phi}^{\alpha,k} \|_{2, \Gamma}^2 
  \le T G^{\alpha, \ell} . 
  \label{refined nonlinear 3.2} 
\end{equation}  
In turn, a substitution of~\eqref{quantity-G-1} and \eqref{refined nonlinear 3.2} into \eqref{refined nonlinear 3.1} indicates that 
\begin{equation} 
    - \Big( \frac{\tilde{\phi}^{T,n+1}-\tilde{\phi}^{T,n}}{\dt}, \tilde{D}_y \tilde{\phi}^{n+1}_{\cdot,N} \Big)_\Gamma 
   \ge \frac{1}{2 \dt} ( G^{T, n+1} - G^{T, n} )  - T ( G^{T, n} + \tilde{C}_5^2 G^{T, n+1})
    - \| \tau^{n+1}_T \|_{2, \Gamma}^2 . 
   \label{refined nonlinear 3.3} 
\end{equation} 
The bottom boundary estimate could be similarly derived: 
\begin{equation}
    \Big( \frac{\tilde{\phi}^{B,n+1}-\tilde{\phi}^{B,n}}{\dt}, \tilde{D}_y \tilde{\phi}^{n+1}_{\cdot,0} \Big)_\Gamma 
    \ge \frac{1}{2 \dt} ( G^{B, n+1} - G^{B, n} )  - T ( G^{B, n} + \tilde{C}_5^2 G^{B, n+1})
    - \| \tau^{n+1}_B \|_{2, \Gamma}^2 . \label{refined nonlinear 3.6} 
\end{equation}
For the right hand side terms of \eqref{refined nonlinear 3.0}, the following bounds are valid:
\begin{align}
    & - (\tau^{n+1}_\phi , \Delta_h \tilde{\phi}^{n+1}) \leq \|\tau^{n+1}_\phi\|_{-1,h} \cdot \|\nabla_h\Delta_h \tilde{\phi}^{n+1} \|_2 \leq \frac{\varepsilon^2}{4} \|\nabla_h \Delta_h \tilde{\phi}^{n+1}\|_2^2 
    + C_0^2 \varepsilon^{-2} \|\tau^{n+1}_\phi\|_2^2,    \label{refined nonlinear 3.7} \\
    & (\nabla_h \tilde{\mu}^{n+1}, \nabla_h \Delta_h \tilde{\phi}^{n+1}) = -\varepsilon^2 \|\nabla_h \Delta_h \tilde{\phi}^{n+1}\|_2^2 + \left( \nabla_h ( (\Phi_N^{n+1})^3 - (\phi^{n+1})^3 ) -\nabla_h \tilde{\phi}^{n} 
    + \nabla_h \tau_\mu^{n+1} ,  \nabla_h \Delta_h \tilde{\phi}^{n+1}\right)   \nonumber \\
    & \quad\quad\quad \leq -\varepsilon^2 \|\nabla_h \Delta_h \tilde{\phi}^{n+1}\|_2^2 + \|\nabla_h \Delta_h \tilde{\phi}^{n+1}\|_2 (\tilde{C}_4 \|\nabla_h \tilde{\phi}^{n+1}\|_2+\|\nabla_h \tilde{\phi}^{n}\|_2 
    + \| \nabla_h \tau_\mu^{n+1} \|_2 ) \nonumber \\
    & \quad\quad\quad \leq -\frac{\varepsilon^2}{2} \|\nabla_h \Delta_h \tilde{\phi}^{n+1}\|_2^2 
    + \varepsilon^{-2} ( \tilde{C}_4^2 \|\nabla_h \tilde{\phi}^{n+1}\|_2^2+ 2 \|\nabla_h \tilde{\phi}^{n}\|_2^2 
    + 2 \| \nabla_h \tau_\mu^{n+1} \|_2^2 ), \label{refined nonlinear 3.8} \\
    & - (  A_h\tilde{\phi}^n\bU^{n+1}, \nabla_h\Delta_h \tilde{\phi}^{n+1} ) \leq \|\bU^{n+1}\|_\infty 
    \cdot \| \tilde{\phi}^n \|_2 \cdot \| \nabla_h\Delta_h \tilde{\phi}^{n+1} \|_2 \leq C^*C_0 \|\nabla_h \tilde{\phi}^n \|_2 
    \cdot \| \nabla_h\Delta_h \tilde{\phi}^{n+1} \|_2  \nonumber \\
    & \quad\quad\quad \leq \frac{\varepsilon^2}{8}\| \nabla_h\Delta_h \tilde{\phi}^{n+1} \|_2^2 
    + 2 C_0^2 ( C^*)^2 \varepsilon^{-2} \|\nabla_h \tilde{\phi}^n \|_2^2.  \label{refined nonlinear 3.9}
\end{align}
The last velocity term could be linked to \eqref{rough error proof 1.7} from the rough error estimate, based on the $\ell^\infty$ bound of the numerical solution:
\begin{align}
    -& (A_h\phi^n \nabla_h \tilde{\mu}^{n+1} , \hat{\tilde{\bu}}^{n+1} )  = ( A_h\phi^n\hat{\tilde{\bu}}^{n+1}, \nabla_h\Delta_h \tilde{\phi}^{n+1} ) - ( A_h\phi^n\hat{\tilde{\bu}}^{n+1}, \nabla_h ( (\Phi_N^{n+1})^3- (\phi^{n+1})^3 ) -\nabla_h \tilde{\phi}^{n} )  \nonumber \\
    & \quad\quad\quad \leq ( A_h\phi^n\hat{\tilde{\bu}}^{n+1}, \nabla_h\Delta_h \tilde{\phi}^{n+1} ) + \tilde{C}_1 
    \|\hat{\tilde{\bu}}^{n+1}\|_2^2 + \frac{\tilde{C}_1 ( \tilde{C}_4^2 +1)}{2} ( \|\nabla_h\tilde{\phi}^{n+1}\|_2^2 + \|\nabla_h\tilde{\phi}^{n}\|_2^2 ) .\label{refined nonlinear 3.10}
\end{align}
A substitution of \eqref{refined nonlinear 3.10} into \eqref{rough error proof 1.7} yields
\begin{align}
  & 
    \frac{1}{2\dt}( \|\tilde{\bu}^{n+1}\|_2^2-\|\tilde{\bu}^n\|_2^2 + \|\hat{\tilde{\bu}}^{n+1} - \tilde{\bu}^n \|_2^2  )+\frac{\dt}{2}(\|\nabla_h \tilde{p}^{n+1}\|_2^2 - \|\nabla_h \tilde{p}^n\|_2^2) \nonumber \\
   \le &  \gamma ( A_h\phi^n\hat{\tilde{\bu}}^{n+1}, \nabla_h\Delta_h \tilde{\phi}^{n+1} )   
   + \frac12 \|\tau_{\bu}^{n+1}\|_2^2 + \frac34 \|\tau^{n+1}_p\|_2^2 
    + \tilde{C}_2 \|\tilde{\bu}^n\|_2^2 
   + ( \tilde{C}_3 + \tilde{C}_1 \gamma) \| \hat{\tilde{\bu}}^{n+1}\|_2^2  \nonumber 
 \\
   & 
   + \frac{( \tilde{C}_1 ( \tilde{C}_4^2 +1)+ C^* C_0^2 ) \gamma}{2} 
    ( \|\nabla_h\tilde{\phi}^{n+1}\|_2^2 + \|\nabla_h\tilde{\phi}^{n}\|_2^2 ) 
  + \frac{\dt^2}{2} \|\nabla_h\tilde{p}^{n+1} \|_2^2 
  + \dt^2 \|\nabla_h\tilde{p}^n \|_2^2  . \label{refined nonlinear 3.11}
\end{align}
In turn,  a combination of \eqref{refined nonlinear 3.0}, \eqref{refined nonlinear 3.3}-\eqref{refined nonlinear 3.9} and \eqref{refined nonlinear 3.11} reveals that
\begin{align}
&\frac{1}{2\dt} ( \|\nabla_h \tilde{\phi}^{n+1}\|_2^2 -\|\nabla_h \tilde{\phi}^n\|_2^2 )+\frac{1}{2\dt\gamma}( \|\tilde{\bu}^{n+1}\|_2^2-\|\tilde{\bu}^n\|_2^2)+\frac{\dt}{2\gamma}(\|\nabla_h \tilde{p}^{n+1}\|_2^2 - \|\nabla_h \tilde{p}^n\|_2^2)  \nonumber \\
& +\frac{1}{2 \dt} ( G^{\Gamma, n+1} - G^{\Gamma,n} ) + \frac{3 \nu}{4} \| \nabla_h \hat{\tilde{\bu}}^{n+1}\|_2^2 
+ \frac{\varepsilon^2}{8}\| \nabla_h\Delta_h \tilde{\phi}^{n+1} \|_2^2 
\nonumber \\
\le &   \tilde{C}_7 (\|\nabla_h \tilde{\phi}^{n+1}\|_2^2+\|\nabla_h \tilde{\phi}^{n}\|_2^2 
+ G^{\Gamma, n+1} + G^{\Gamma,n} + \|\tilde{\bu}^n\|_2^2 ) 
  + \frac{\dt^2}{2 \gamma} \|\nabla_h\tilde{p}^{n+1} \|_2^2  
  + \frac{\dt^2}{\gamma} \|\nabla_h\tilde{p}^n \|_2^2 
  \nonumber \\
&    
 + \tilde{C}_8 ( \|\tau^{n+1}_\phi\|_2^2 + \|\tau^{n+1}_T\|_{2,\Gamma}^2 + \|\tau^{n+1}_B\|_{2,\Gamma}^2 
  + \|\tau_{\bu}^{n+1}\|_2^2 + \|\tau_{p}^{n+1}\|_2^2 + \| \nabla_h \tau_\mu^{n+1} \|_2^2 ),
\end{align}
where $\tilde{C}_7 = \max ( ( \tilde{C}_4^2 + 2 + 2 C_0^2 (C^*)^2 ) \varepsilon^{-2} + \frac{\tilde{C}_1 ( \tilde{C}_4^2 +1)+ C^* C_0^2}{2}, T, \tilde{C}_5^2 T, ( \tilde{C}_2 + 2 \tilde{C}_3 ) \gamma^{-1} + \tilde{C}_1 )$, $\tilde{C}_8 = \max ( \frac{3}{4 \gamma} , C_0^2 \varepsilon^{-2} , 2 \varepsilon^{-2})$. Notice that a quantity, $G^{\Gamma,n} := G^{T, n}+ G^{B, n}$, has been introduced in the derivation. Therefore, with sufficiently small $\dt$ and $h$, an application of discrete Gronwall inequality leads to the desired convergence estimate
\begin{equation}
    \|\nabla_h \tilde{\phi}^{n+1}\|_2 + (G^{\Gamma, n+1} )^\frac12 
    + \|\tilde{\bu}^{n+1}\|_2 + \dt \|\nabla_h \tilde{p}^{n+1}\|_2 \leq C(\dt + h^2),\label{refined nonlinear 3.12}
\end{equation}
based on the higher order truncation error accuracy. Moreover, by the preliminary estimate~\eqref{refined nonlinear 3.2}, we see that 
\begin{equation}
    \|\nabla_h \tilde{\phi}^{n+1}\|_2 + \| \tilde{\phi}^{T,n+1}\|_{2,\Gamma} + \| \tilde{\phi}^{B,n+1}\|_{2,\Gamma} 
    + \|\tilde{\bu}^{n+1}\|_2 + \dt \|\nabla_h \tilde{p}^{n+1}\|_2 \leq C(\dt + h^2),\label{refined nonlinear 3.12-2}
\end{equation}
This completes the refined error estimate. 

With the established convergence estimate \eqref{refined nonlinear 3.12-2} at hand, the a-priori assumption
in \eqref{a-priori assumption} could be recovered at the next time step $t^{n+1}$:
\begin{equation}
    \|\tilde{\bu}^{n+1}\|_2,\ \dt \|\nabla_h\tilde{p}^{n+1}\|_2,\  \| \nabla_h\tilde{\phi}^{n+1} \|_2,\ \| \tilde{\phi}^{T,n+1} \|_{2,\Gamma} \, \| \tilde{\phi}^{B,n+1} \|_{2,\Gamma}  \leq C(\dt+h^2) \leq \dt^{\frac{7}{8}}+h^{\frac{15}{8}}.
\end{equation}
provided that $\dt$ and $h$ are sufficiently small, in which a discrete Poincar{\'e} inequality has been used again. Therefore, an induction analysis could be effectively applied. This finishes the optimal rate convergence analysis and also completes the proof of Theorem \ref{main result theorem}. 

\begin{rem} 
In terms of the theoretical analysis of the CHNS system~\eqref{CHNS-DBC-1}-\eqref{CHNS-DBC-6} with a dynamical boundary condition, one key difficulty is associated with the boundary trace of the bulk profile, as well as its equivalence to the boundary profile. If a finite difference spatial approximation is taken, the corresponding argument becomes more straightforward, facilitated by the point-wise evaluations on the boundary grid points. This is the key reason why we choose such a spatial discretization in this work. If another spatial approach is taken, such as the mixed finite element method, the associated boundary trace arguments are expected to be more complicated. The corresponding convergence analysis with other spatial discretization will be considered in the future works. 
\end{rem} 

\begin{rem} 
The theoretical analysis reported in this article could be extended to the three-dimensional system, with the help of higher order symptomatic expansion in the consistency analysis, so that an application of inverse inequality would be able to control the discrete maximum norm of the numerical solution. The technical details will be reported in the future works. 
\end{rem} 

\begin{rem} 
A second order (in time) numerical scheme, based on a modified Crank-Nicolson temporal discretization, could be similarly designed for the system~\eqref{CHNS-DBC-1}-\eqref{CHNS-DBC-6}; also see the related works~\cite{chen24b, diegel17} for the CHNS system with the homogeneous boundary conditions. The convergence analysis and error estimate for the second order scheme is expected to follow a similar idea, and the technical details will be left in the future works.  
\end{rem} 

\begin{rem} 
For simplicity, we only consider the no-penetration, no-slip boundary condition for the velocity vector, so that the convection term does not appear on the boundary evolutionary equation. Meanwhile, the PDE system~\eqref{CHNS-DBC-1}-\eqref{CHNS-DBC-6} with either a free slip or Navier boundary condition is also of great physical importance~\cite{QianWangSheng03}, and the numerical design and theoretical analysis turns out to be more challenging. The associated total energy stability and convergence analysis will be considered in the future works. 
\end{rem} 

\begin{rem} 
For simplicity of presentation, the density variation in the fluid model has not been considered in this article.  Meanwhile, the variable density Cahn-Hilliard-Navier-Stokes system has also been studied in recent years, and the theoretical analysis with the homogeneous Neumann boundary condition has been reported as well. If the density variation is taken into consideration in the PDE system~\eqref{CHNS-DBC-1}-\eqref{CHNS-DBC-6}, the convergence analysis is also expected to go through, as long as the total energy dissipation law becomes valid. This challenging and scientifically important issue will be reported in the future works. 
\end{rem}

\section{Numerical results}  \label{sec:simulations} 
In this section, some numerical examples will be presented to validate the proposed scheme. The phase field equation is treated by a nonlinear full approximation scheme (FAS) multigrid method \cite{trottenberg01}, coupled with the standard Newton iteration on the 1-D boundary. The NS equation is solved using sparse matrix decomposition technique, with the chemical potential serving as a bridge to perform Picard iteration. The fast Fourier transform (FFT) is applied to the Poisson equation.
\subsection{Convergence test}
In this part we verify the accuracy of the numerical scheme \eqref{scheme-CHDBC-1}-\eqref{scheme-CHDBC-6} at the final time $T=0.1$. For simplicity, the computational domain is chosen as $ \Omega=(0,1)^2$. The surface diffusion parameter is given by $\varepsilon=0.1$; the kinematic viscosity $\nu$ and the surface tension parameter $\gamma$ are both set to $1$. The initial data of the phase variable and velocity field are given by
\begin{align}
\phi^0=\frac{1}{\pi}\sin(2\pi x)\cos(2\pi y), \quad  \mathbf{u}^0=\binom{-\cos (2 \pi x) \sin (2 \pi y)}{\sin (2 \pi x) \cos (2 \pi y)}. \label{initial phi velocity}
\end{align}
In terms of spatial accuracy, the time step size is fixed as $\dt=10^{-6}$, combined with decreasing mesh sizes. Since the exact solution is not explicit, we compute the Cauchy difference, defined as
\begin{equation}
	\delta_{\phi} := \phi_{h_c} - I_c^f (\phi_{h_f}),
\end{equation}
where $h_f$ denotes the numerical solution on the fine mesh and $h_c$ is the one on the coarse mesh.
Here we set $h_c=2h_f$. In fact, $I_c^f$ is a bilinear projection, mapping the solution from fine grid to
coarse grid. The $\ell^2$ and $\ell^\infty$ Cauchy differences and convergence rates of $\phi$ are presented in Table \ref{spatial accuracy}. It is evident that the accuracy orders gradually converge towards the theoretical value of 2.

In terms of temporal accuracy, we fix the spatial mesh size as $h=1/128$ and calculate the benchmark solution with $\dt=0.1/1024$. The numerical temporal errors and rates are displayed in Table \ref{temporal accuracy}, which verifies the theoretical convergence order of 1. 
\begin{table}[h!]
	\centering
	\begin{tabular}{llllll}
		\hline
	     Grid size	& 32-16      & 64-32       & 128-64     & 256-128     & 512-256     \\ \hline
		$\ell^2$ error     & 1.543E-03  & 3.880E-04  & 9.780E-05 & 2.494E-05  & 6.583E-06  \\
		$\ell^2$ rate      &            & 1.992      & 1.988     & 1.971      & 1.922  \\
		$\ell^\infty$ error & 2.967E-03 & 7.649E-04  & 1.927E-04 & 5.651E-05  & 1.577E-05  \\
		$\ell^\infty$ rate &            & 1.956      & 1.989     & 1.770      & 1.841 \\ \hline
	\end{tabular}\caption{Numerical spatial errors and orders.}
\label{spatial accuracy}
\end{table}
\begin{table}[h!]
	\centering
	\begin{tabular}{llllll}
		\hline
      Time step      & 0.1/32   & 0.1/64     & 0.1/128    & 0.1/256    & 0.1/512  \\ \hline
 $\ell^2$ error      & 7.502E-02& 4.070E-02  & 2.009E-02  & 8.847E-03  & 2.988E-03 \\
 $\ell^2$ rate       &          & 0.882      & 1.018      & 1.183      & 1.566  \\
 $\ell^\infty$ error & 1.448E-01& 7.924E-02  & 3.925E-02  & 1.732E-02  & 5.857E-03 \\
 $\ell^\infty$ rate  &          & 0.870      & 1.014      & 1.180      & 1.565 \\ \hline
	\end{tabular}\caption{Numerical temporal errors and orders.}
\label{temporal accuracy}
\end{table}

\subsection{Spinodal decomposition}
The spinodal decomposition begins with a constant initial data plus a random perturbation, combined with a zero velocity:
\begin{equation}
	\phi^0_{i,j}=0.2+0.02*r_{i,j}, \quad \bu^0=\boldsymbol{0} , \label{rand initial}
\end{equation}
where $r_{i,j}$ are uniformly distributed random numbers in $[-1,1]$. Figure \ref{fig:Spinodal mass and energy} illustrates the evolution of the mass difference, which is defined as $\overline{\phi^n}-\overline{\phi^0}$, and the total energy. Both the mass conservation and the energy dissipation are confirmed.
Figure \ref{fig:Spinodal} displays several snapshots of the scalar field $\phi$ at representative time instants. The computational domain is bounded by physical walls at its top and bottom, whereas periodic boundary conditions are applied along the $x$-direction. The snapshots demonstrate that the interface does not intersect with the wall perpendicularly, deviating from the behavior observed under homogeneous Neumann boundary conditions. A transition layer of characteristic thickness develops adjacent to the boundary, suggesting the presence of a short-range, wetting-type interaction. 
\begin{figure}[h!]
	\centering
	\includegraphics[width=0.35\linewidth]{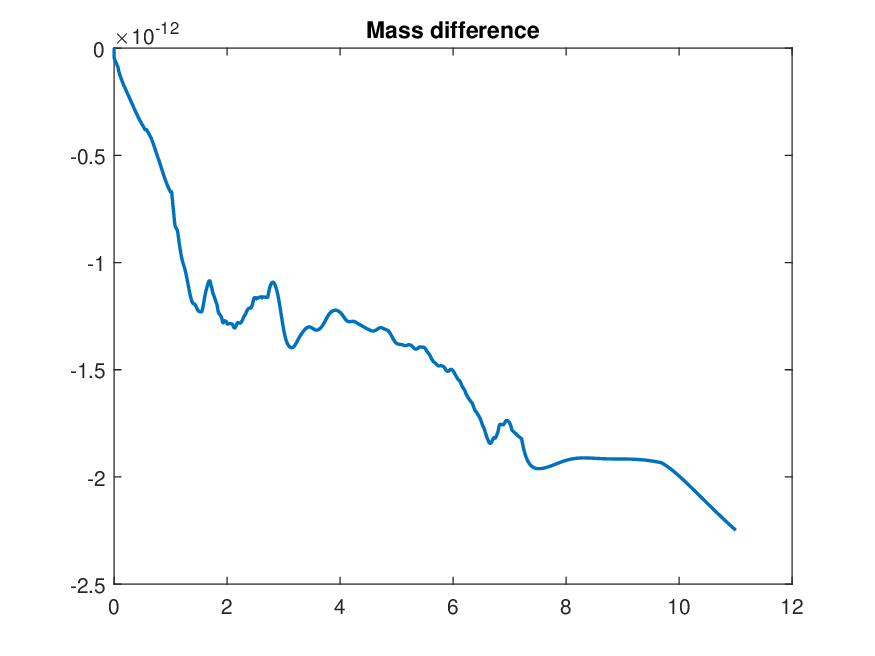}\hspace{-5mm}
	\includegraphics[width=0.35\linewidth]{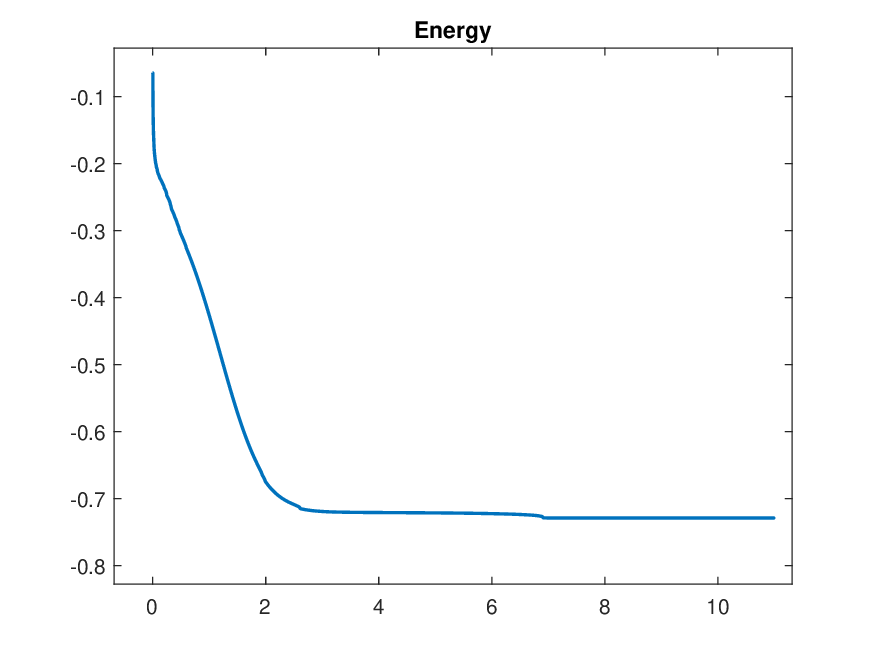}
	\caption{The mass difference and energy evolution with initial data \eqref{rand initial}.}
	\label{fig:Spinodal mass and energy}
\end{figure}
\begin{figure}[h!]
    \centering
    \includegraphics[width=0.33\linewidth]{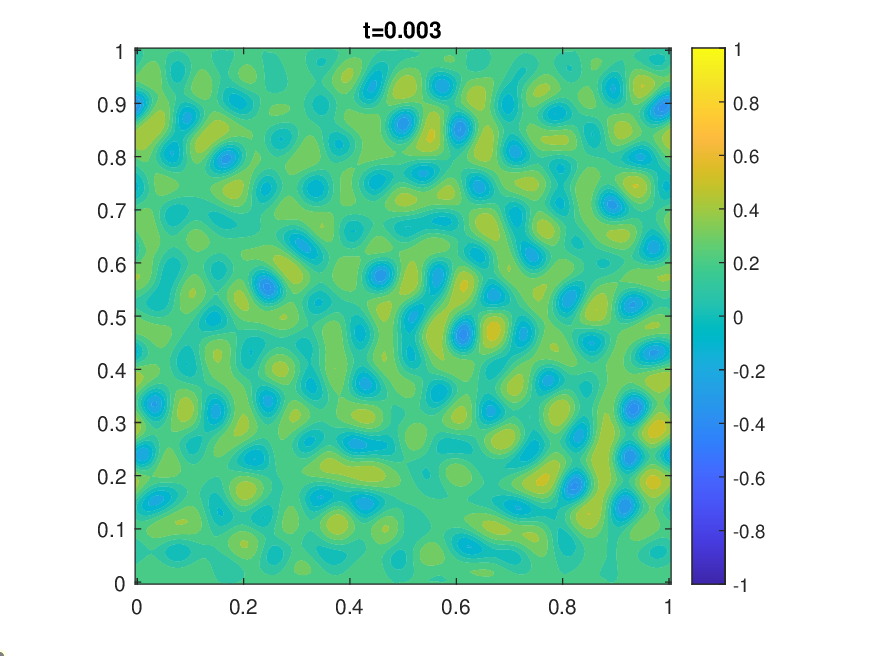}\hspace{-8mm}
    \includegraphics[width=0.33\linewidth]{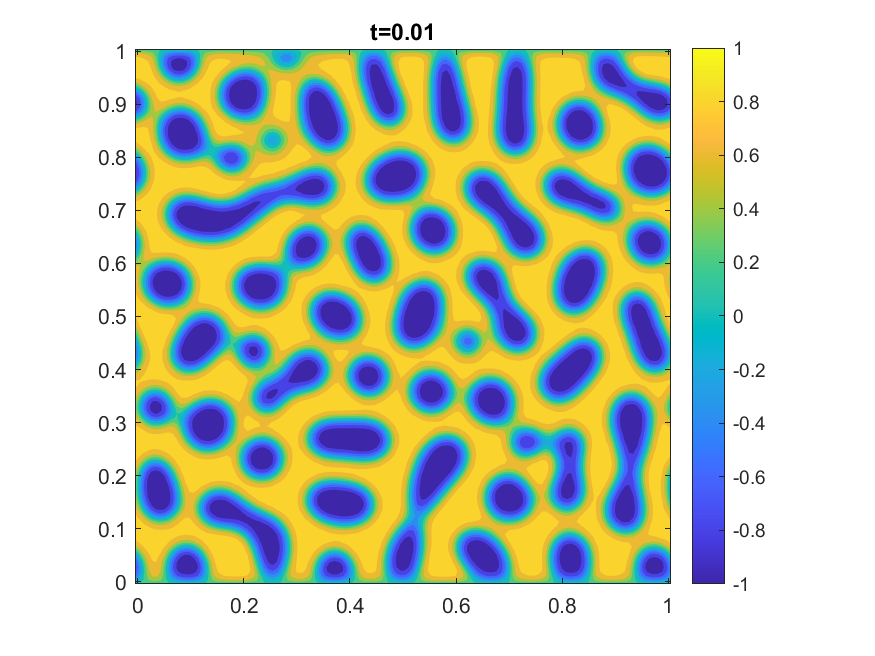}\hspace{-8mm}
    \includegraphics[width=0.33\linewidth]{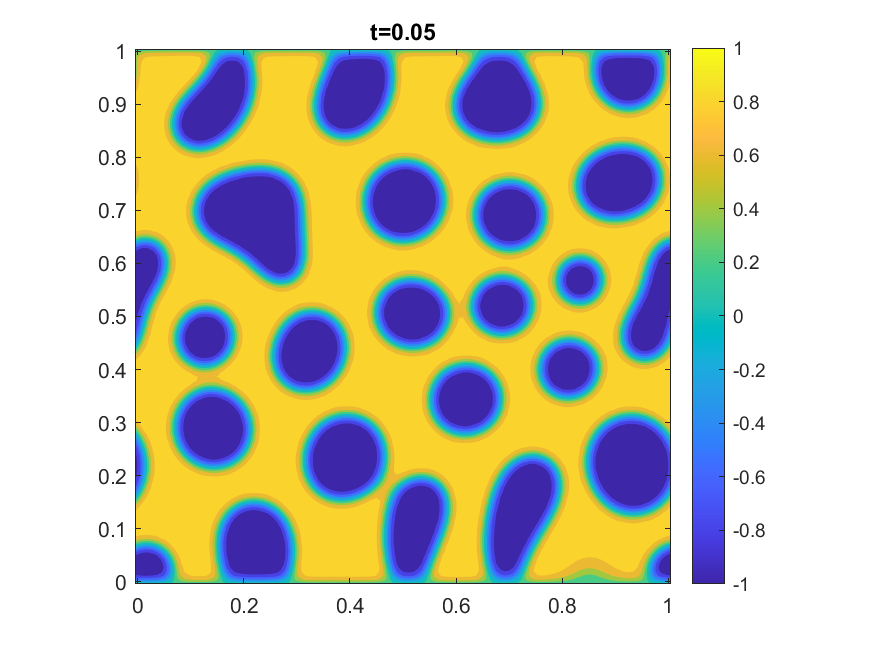}

    \includegraphics[width=0.33\linewidth]{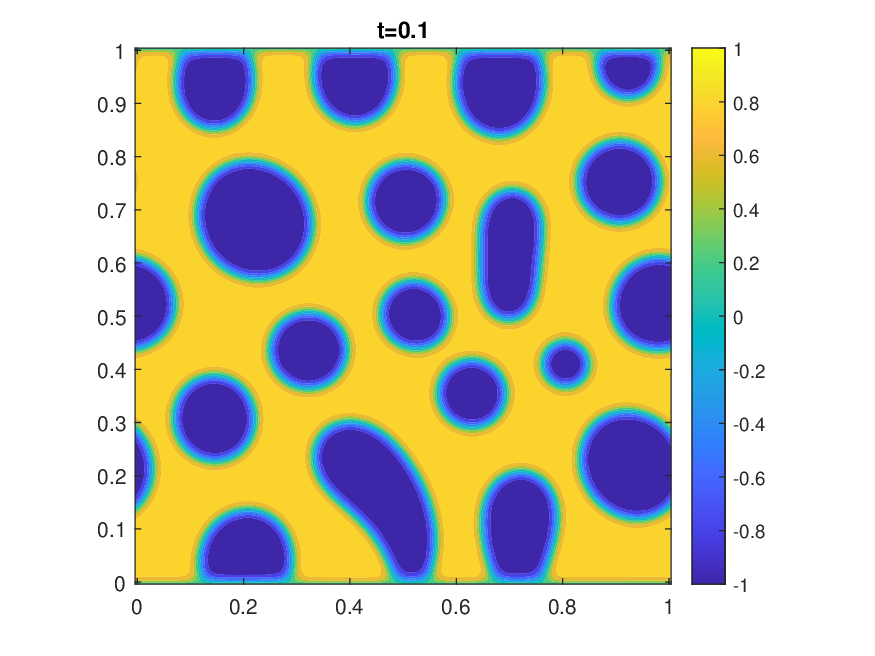}\hspace{-8mm}
    \includegraphics[width=0.33\linewidth]{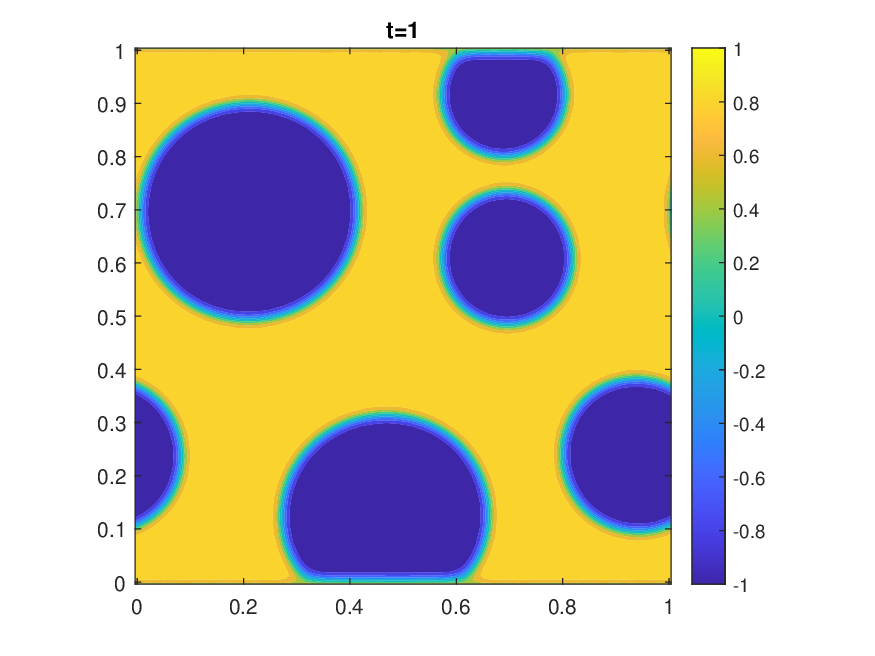}\hspace{-8mm}
    \includegraphics[width=0.33\linewidth]{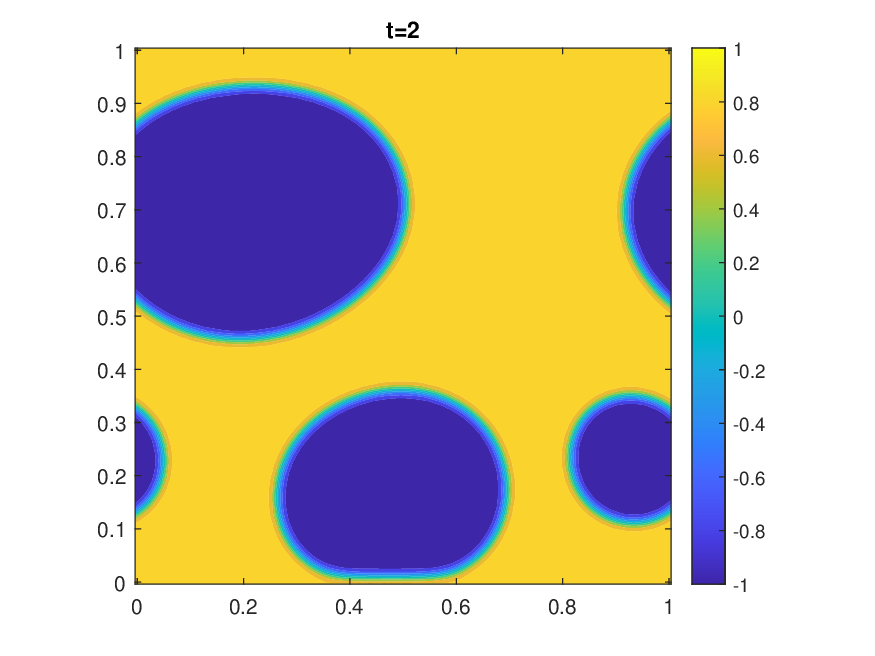}

    \includegraphics[width=0.33\linewidth]{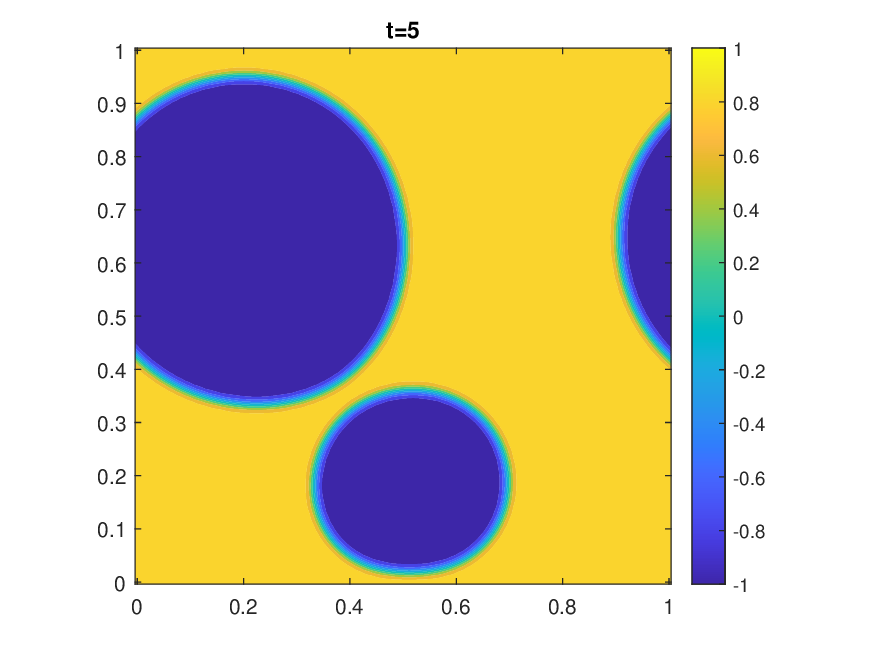}\hspace{-8mm}
    \includegraphics[width=0.33\linewidth]{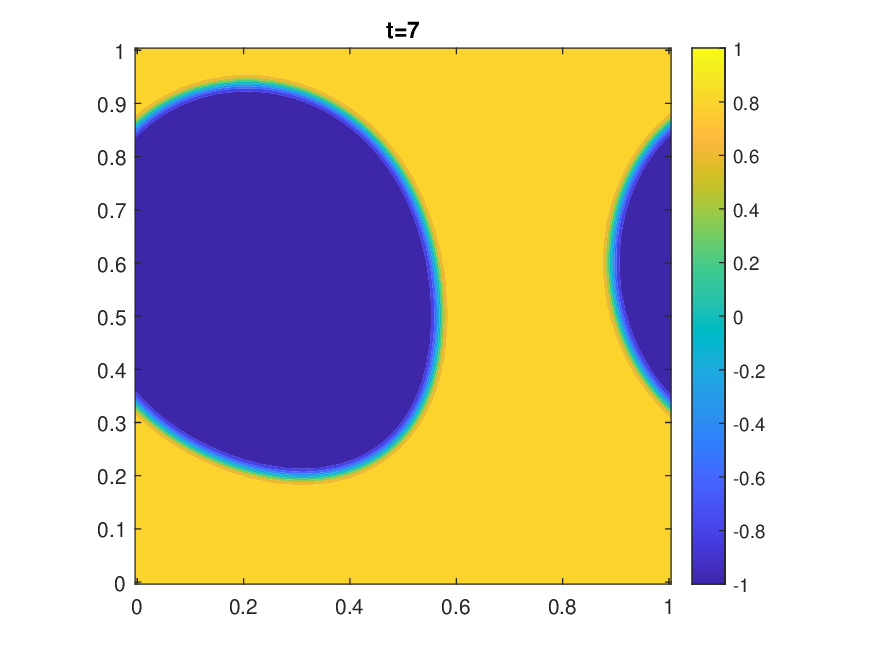}\hspace{-8mm}
    \includegraphics[width=0.33\linewidth]{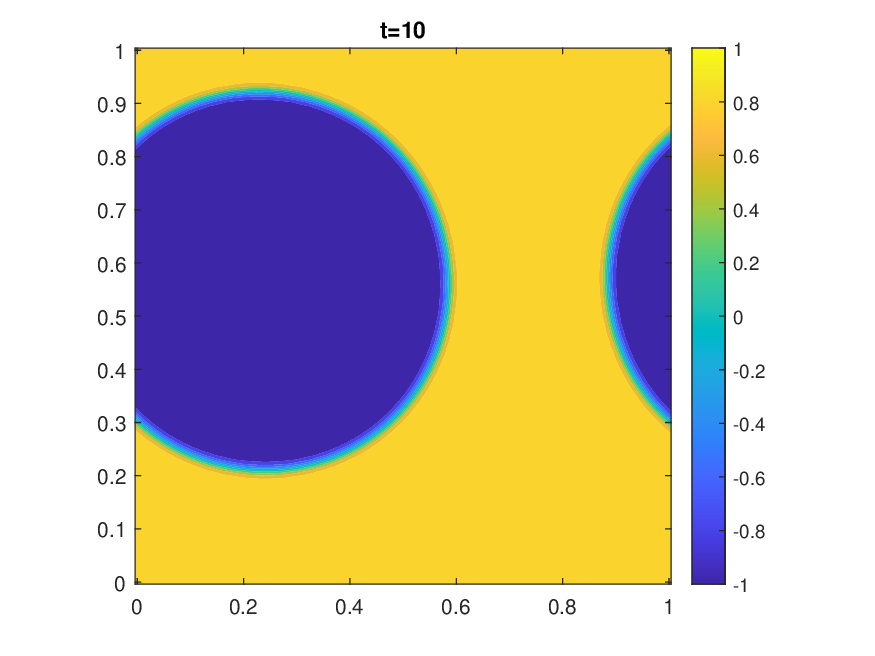}
    \caption{The phase evolution of $\phi$ at several time instants with initial data \eqref{rand initial}.}
    \label{fig:Spinodal}
\end{figure}

\subsection{Moving contact line}
In this part, we simulate the moving contact line evolution with difference static contact angles. Considering different time scale, the surface energy is defined as follows \cite{wang2008moving, Xu_Di_Yu_2018, Xu_Wang_analysis}
\begin{align}
	E_{\rm surf} (\psi) = \int_\Gamma \, \frac{\sqrt{2}\varepsilon}{3} \cos(\theta_s) \sin(\frac{\pi}{2}\psi)  \,  \dS ,  \label{Moving contact line}
\end{align}
where $\theta_s$ is the static contact angle for the  $\phi=-1$ phase, and $\varepsilon$ is the bulk interface thickness, related to time scale. 
The initial data is set as a semicircular droplet with its center located at the dynamic boundary $(0.5, 0)$, which indicates that the interface is perpendicular to the physical boundary. The droplet takes a value of $1$ in the interior and $-1$ in the exterior. The surface diffusion parameter is set as $\varepsilon = 0.01$. The static contact angle is taken to be $\cos(\theta_s)=\pm\frac{1}{2}$ respectively, to simulate both wetting and non-wetting phenomena.

Figure \ref{fig:120} illustrates the temporal evolution with $\cos(\theta_s)=-\frac{1}{2}$. It can be observed that the droplet slowly extends outward, and the contact angle gradually changes from $\frac{\pi}{2}$ to an obtuse angle, which is consistent with the theoretical model.
\begin{figure}[h!]
	\centering
	\includegraphics[width=0.33\linewidth]{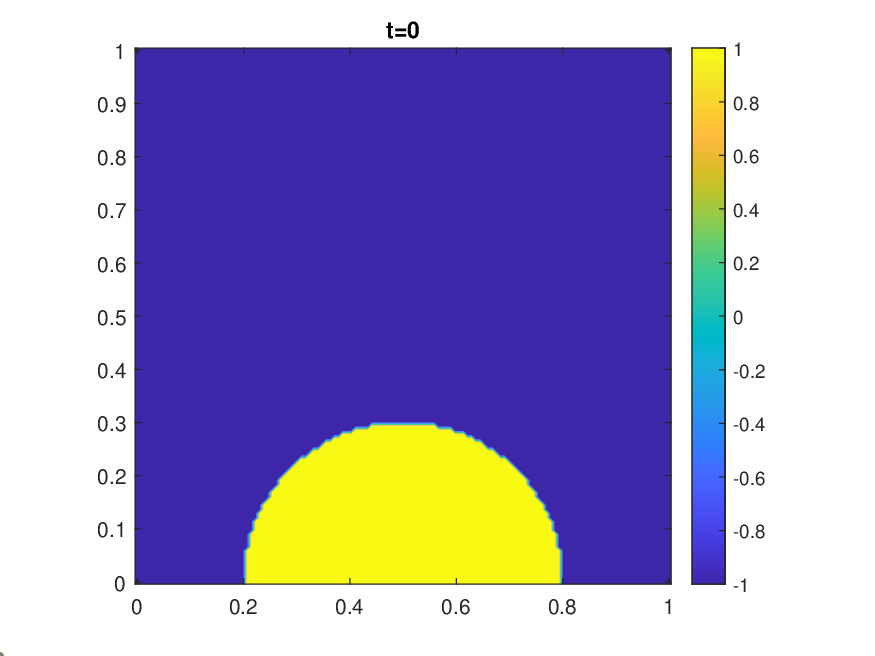}\hspace{-8mm}
	\includegraphics[width=0.33\linewidth]{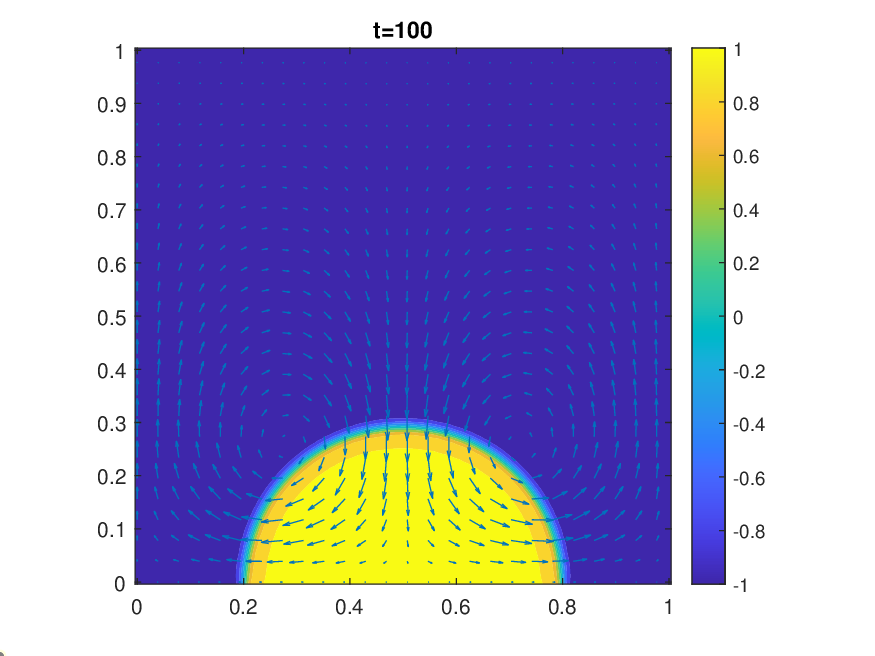}\hspace{-8mm}
	\includegraphics[width=0.33\linewidth]{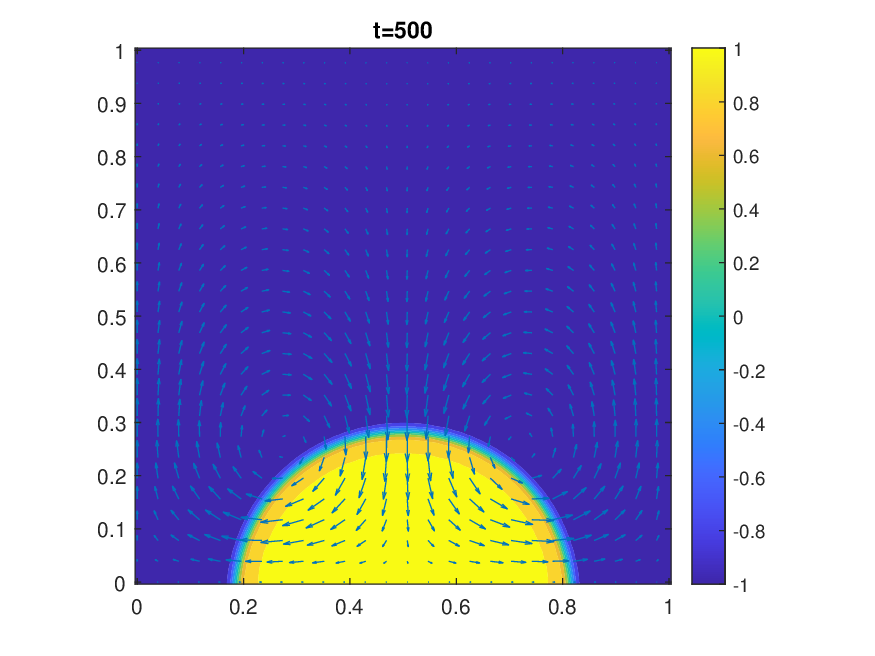}
	
	\includegraphics[width=0.33\linewidth]{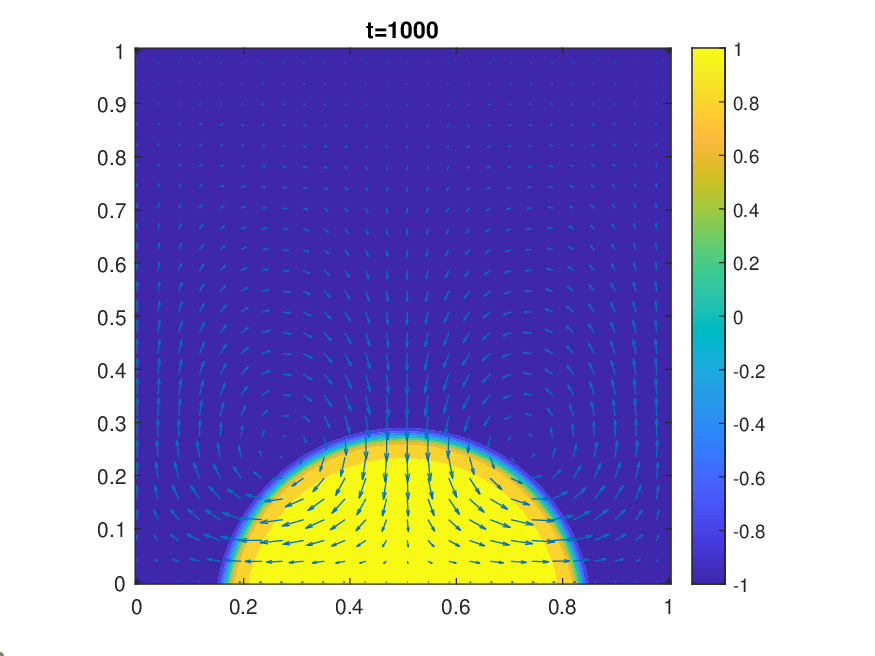}\hspace{-8mm}
	\includegraphics[width=0.33\linewidth]{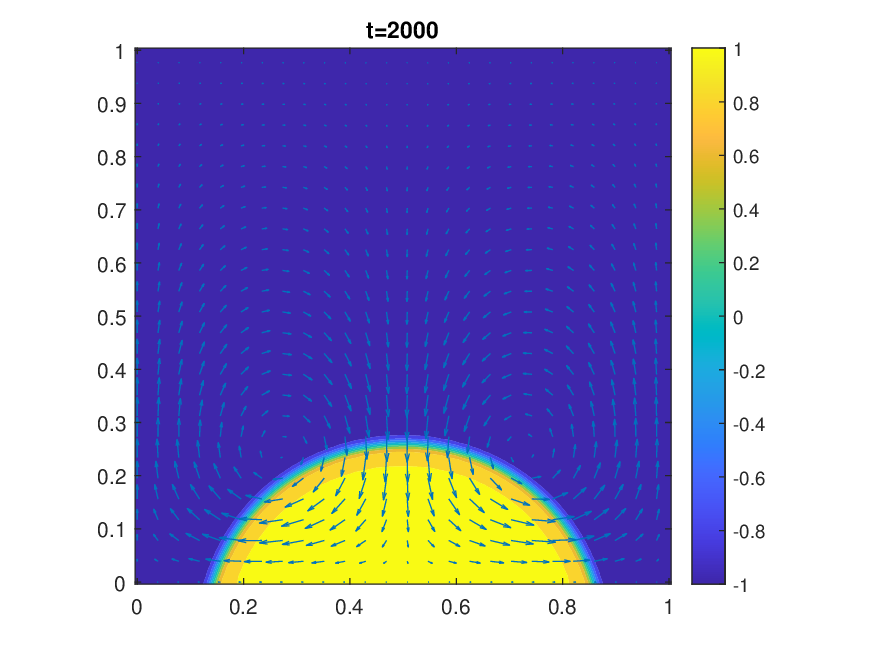}\hspace{-8mm}
	\includegraphics[width=0.33\linewidth]{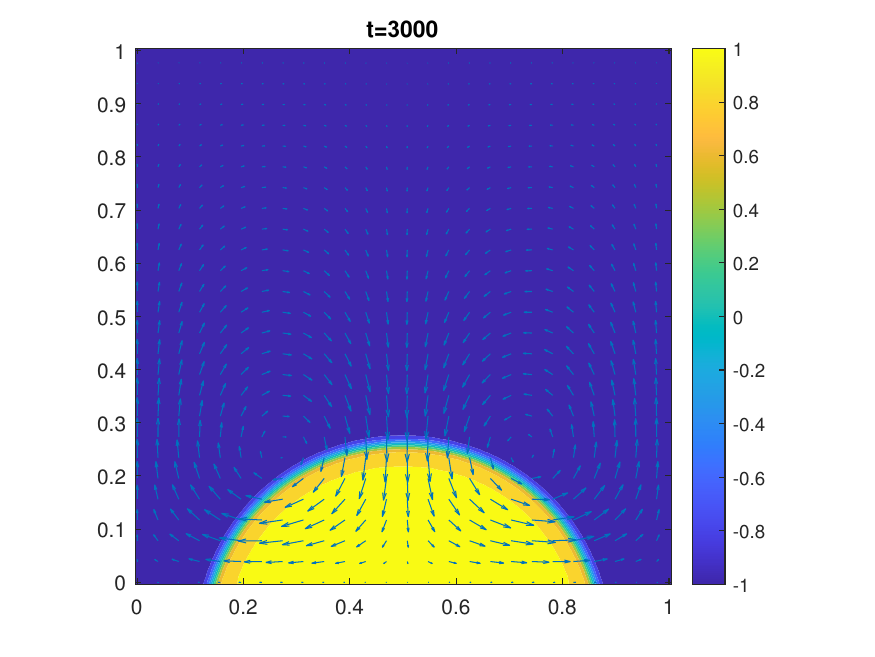}
	\caption{The phase evolution of $\phi$ with $\cos(\theta_s)=-\frac{1}{2}$.}
	\label{fig:120}
\end{figure}

Figure \ref{fig:60} displays the evolution of phase field with $\cos(\theta_s)=\frac{1}{2}$. Both scenarios require a very long time evolution to reach equilibrium. In combination with the previous example, the evolution can be divided into two stages. Initially, the bulk interface begins to diffuse and makes the primary contribution to energy dissipation, during which the total energy decreases rapidly while the evolution of the physical boundary is less pronounced. Subsequently, as the interior approaches a steady state, the contact angle begins to change. Owing to the absence of a diffusion term in the boundary energy, this process is slower and eventually reaches an equilibrium state. In addition, Figure \ref{fig:energy mass} plots the energy and mass curves for both two examples, confirming the properties of energy dissipation and mass conservation.
\begin{figure}[h!]
	\centering
	\includegraphics[width=0.33\linewidth]{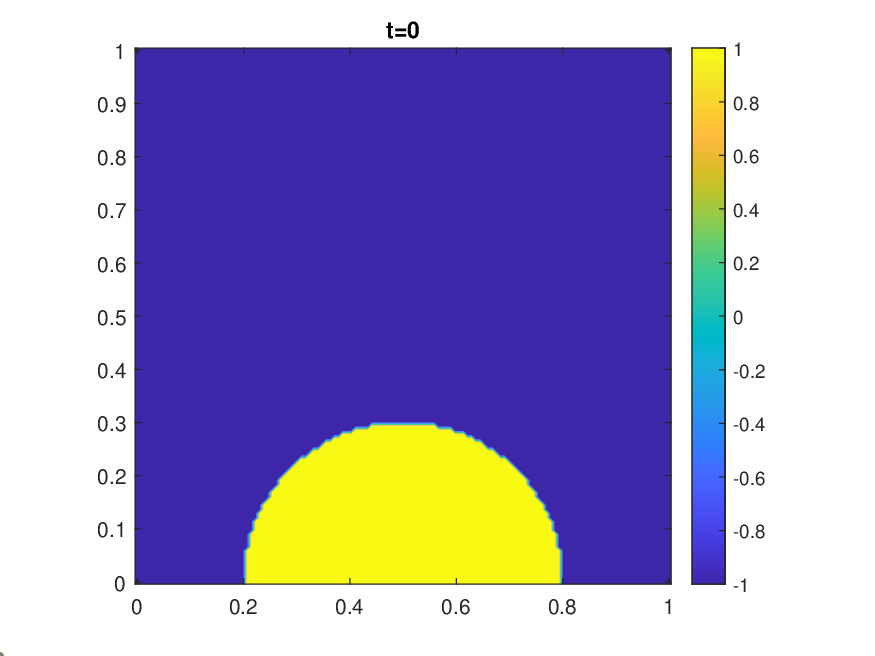}\hspace{-8mm}
	\includegraphics[width=0.33\linewidth]{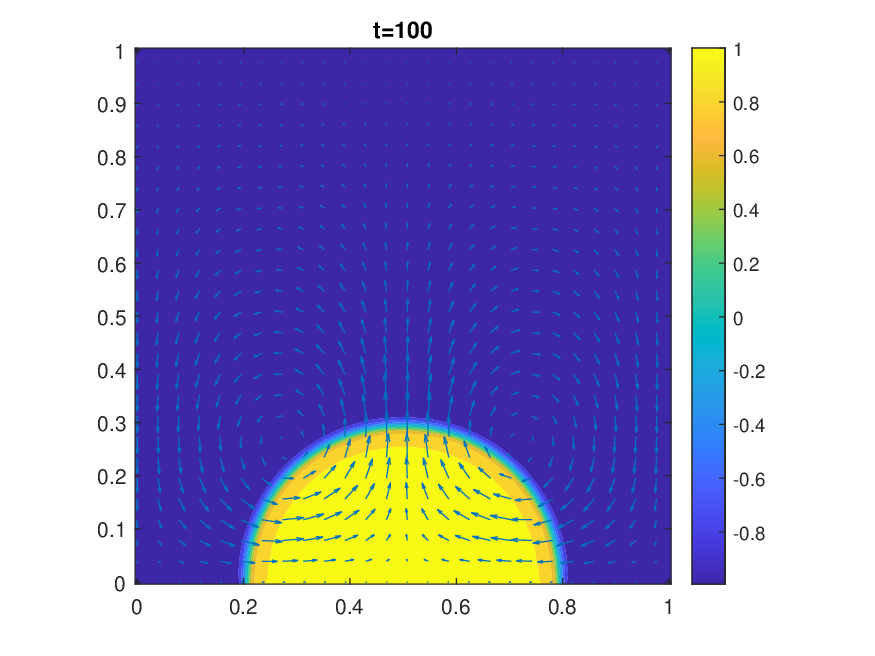}\hspace{-8mm}
	\includegraphics[width=0.33\linewidth]{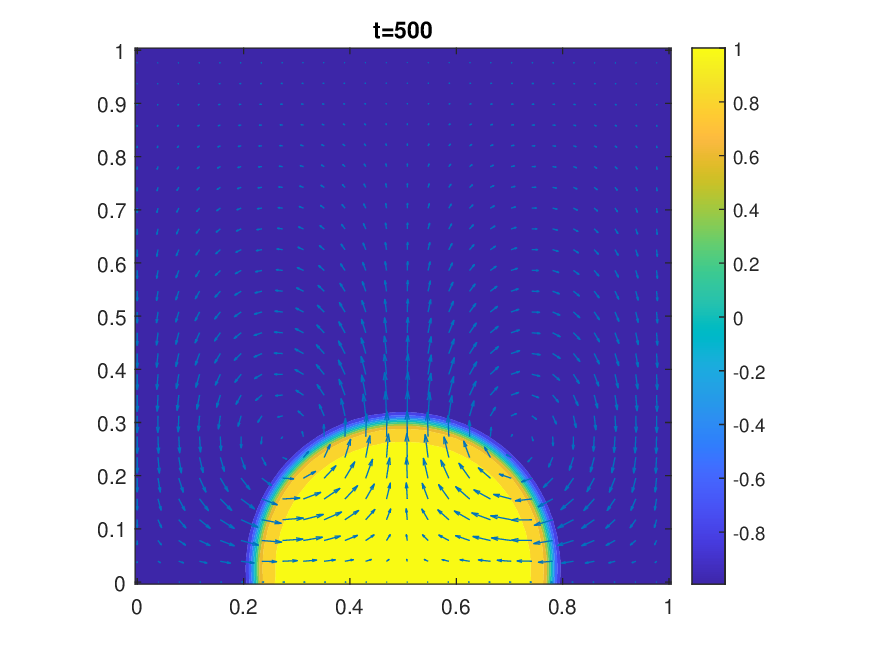}
	
	\includegraphics[width=0.33\linewidth]{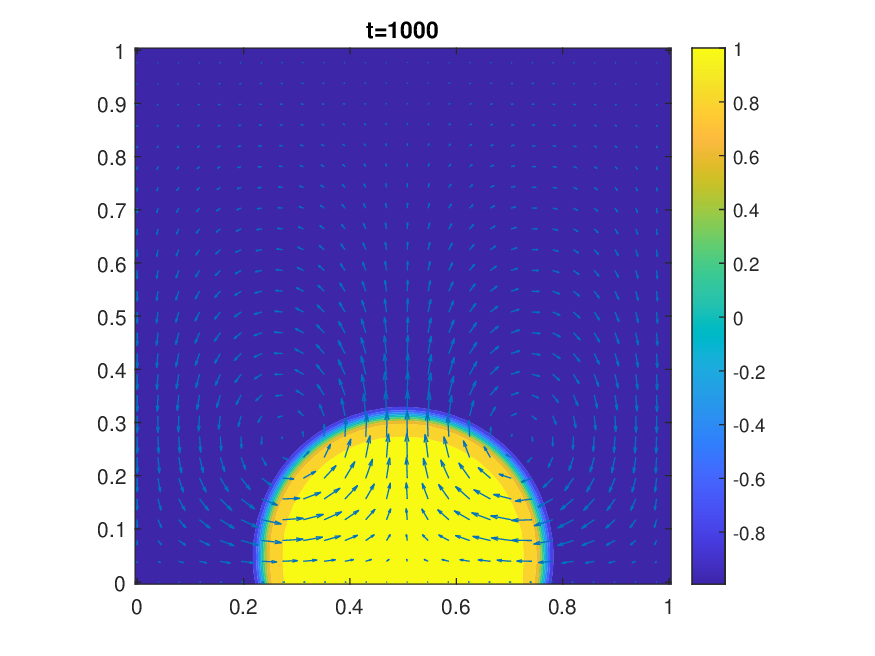}\hspace{-8mm}
	\includegraphics[width=0.33\linewidth]{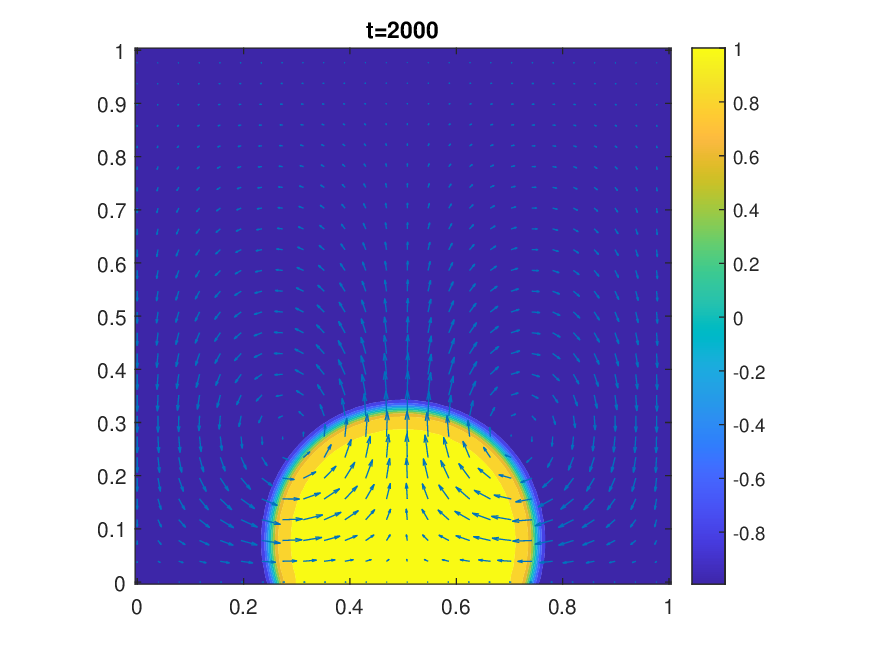}\hspace{-8mm}
	\includegraphics[width=0.33\linewidth]{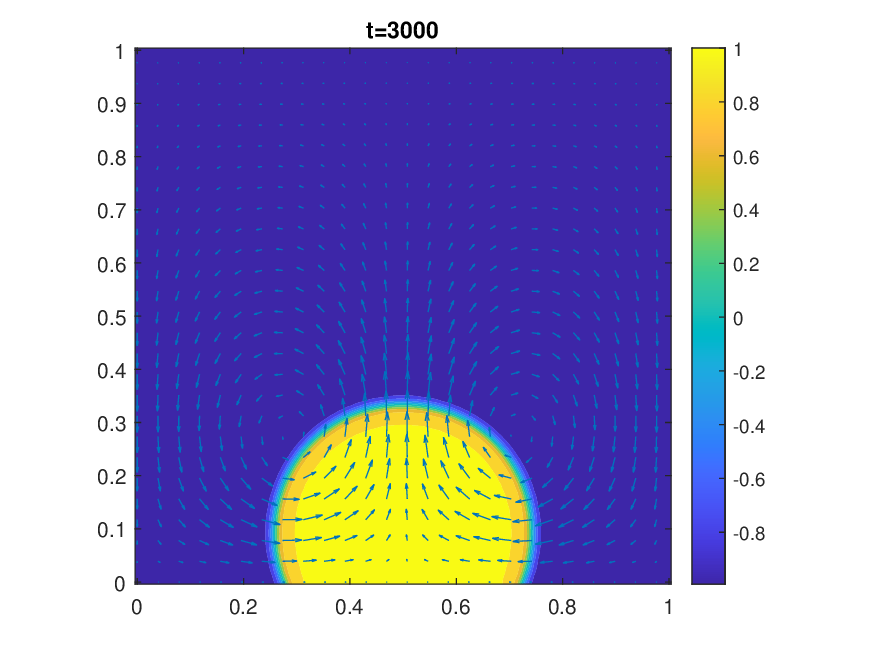}
	\caption{The phase evolution of $\phi$ with $\cos(\theta_s)=\frac{1}{2}$.}
	\label{fig:60}
\end{figure}
\begin{figure}[h!]
	\centering
	\includegraphics[width=0.3\linewidth]{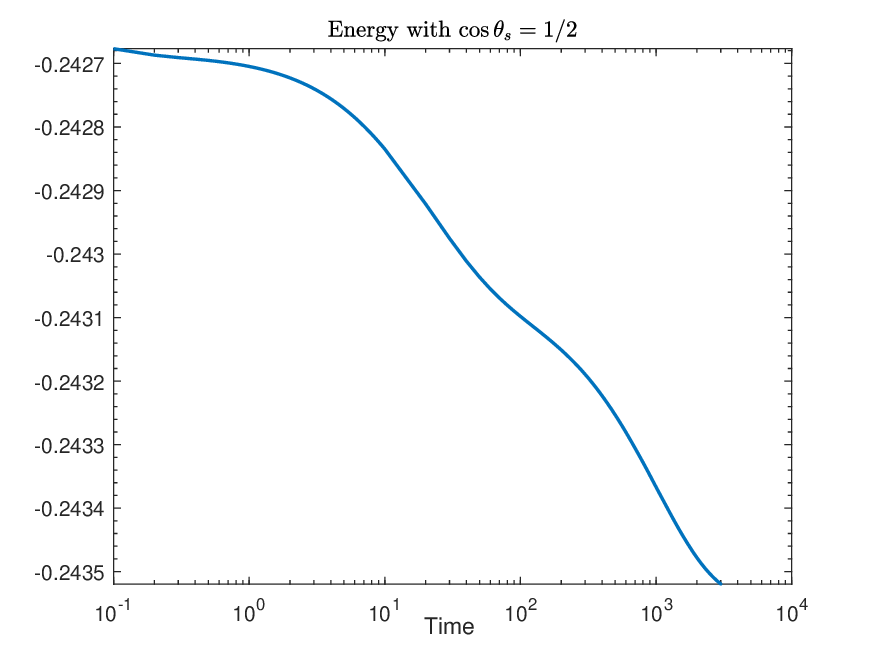}
	\includegraphics[width=0.3\linewidth]{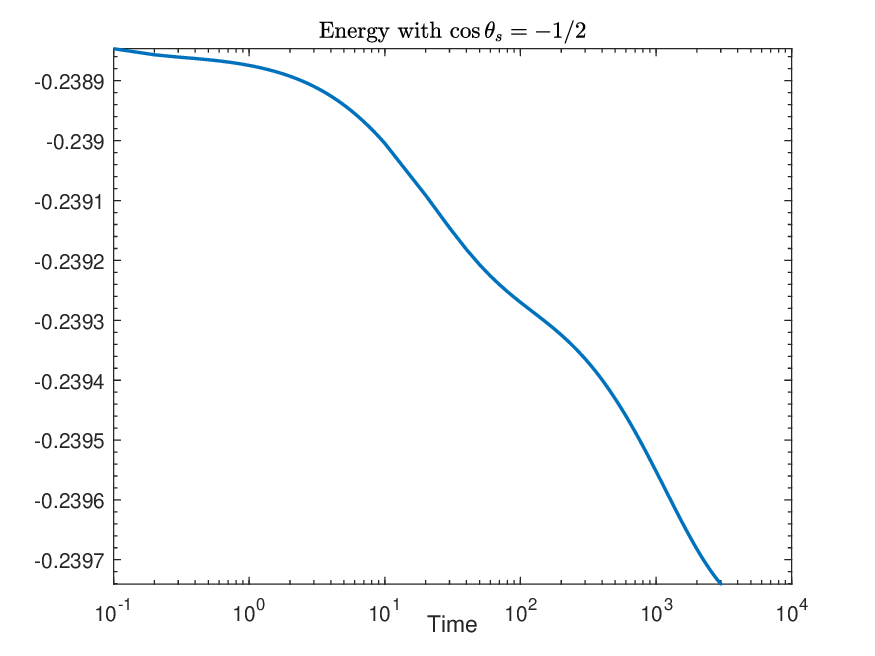}
	\includegraphics[width=0.3\linewidth]{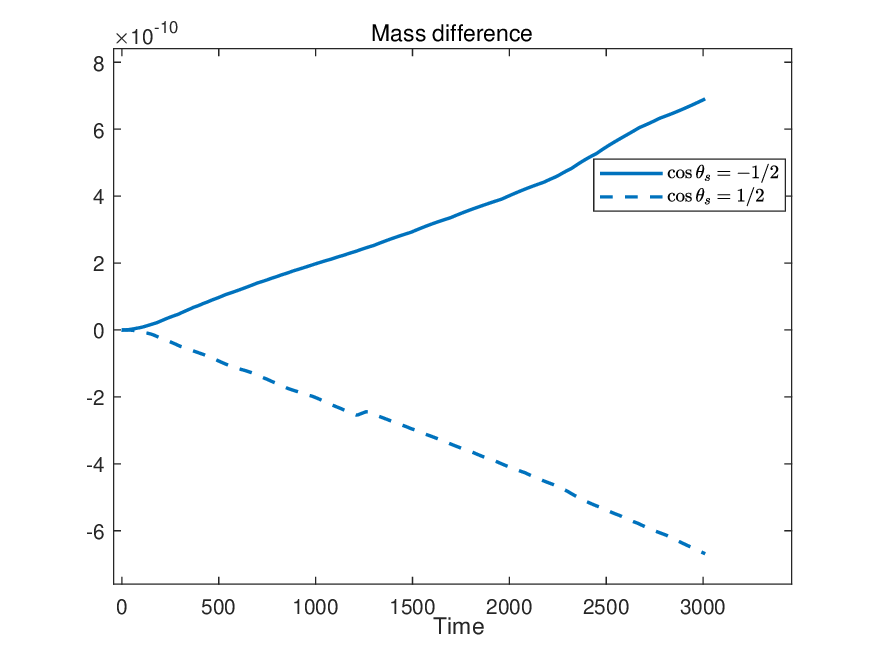}
	\caption{The evolution of energies and mass changes of both two cases.}
	\label{fig:energy mass}
\end{figure}

\section{Conclusions}  \label{sec:conclusion} 
In this paper, we have presented a fully discrete finite difference numerical scheme of the Cahn-Hilliard-Navier-Stokes equation with a relaxed dynamical boundary condition, with potential applications to the moving contact line problem. 
The classic polynomial potential is included in both bulk and boundary energies, and a dynamical evolution equation for the boundary profile corresponds to a lower-dimensional phase field coupled by the normal derivative. In the numerical scheme, a convex splitting technique is applied to treat the chemical potential,  both at the interior area and on the boundary section. The resulting numerical system could be represented as a monotone mapping, and the Browder–Minty lemma ensures its unique solvability, i.e., the existence and uniqueness of the numerical solution. The total energy stability analysis could be theoretically justified based on the convex-concave decomposition. An optimal rate convergence analysis and error estimate is established at a theoretical level, with the help of higher order consistency analysis for the boundary extrapolation, combined with rough and refined error estimates. Due to the absence of a diffusion term in the boundary equation, the convergence analysis relies on a rearrangement of summations, along with the introduction of an auxiliary mass function. Some numerical experiments have also been presented, which demonstrate the theoretical properties of the proposed scheme.

\section*{Acknowledgments}
C. Wang is partially supported by the National Science Foundation, DMS-2309548 and Z.R. Zhang is partially supported by the NSFC, PR China No. 11871105 and 12231003. Y.Z. Guo thanks the Hong Kong Polytechnic University for the generous support.
\bibliographystyle{plain}
\bibliography{draft_ref}



\end{document}